
\documentclass[letterpaper, 10 pt, draftclsnofoot, onecolumn]{ieeeconf}  

 \IEEEoverridecommandlockouts                              
 \overrideIEEEmargins

\usepackage{graphicx} 
\usepackage{amsmath,bm,times} 
\usepackage{amssymb}  
\usepackage{tabularx}
\usepackage[ruled,vlined, procnumbered]{algorithm2e}
\usepackage{subfigure}
\usepackage{tikz}
\usetikzlibrary{shapes,arrows,backgrounds,fit,positioning,snakes,decorations.markings}

\usepackage{enumitem}
\usepackage{cite}

\newtheorem{mydef}{\textbf{Definition}} [section]
\newtheorem{assump}[mydef] {\textbf{Assumption}}
\newtheorem{thm}[mydef] {\textbf{Theorem}}
\newtheorem{lemma}[mydef]{\textbf{Lemma}}
\newtheorem{prop}[mydef]{\textbf{Proposition}}
\newtheorem{cor}[mydef]{\textbf{Corollary}}
\newtheorem{remark}[mydef]{\textbf{Remark}}

\newtheorem{prob}[mydef]{\textbf{Problem}}

\renewcommand{\themydef}{\thesection.\arabic{mydef}}

\title{\LARGE \bf
Optimal Remote State Estimation for Self-Propelled Particle Models}


\author{Shinkyu Park and Nuno C. Martins
\thanks{Shinkyu Park and Nuno C. Martins are with the Department of Electrical and Computer Engineering, University of Maryland College Park, College Park, MD 20742-4450, USA.
        {\tt\small \{skpark, nmartins\}@umd.edu}}%
}

\begin{document}

\maketitle

\begin{abstract}
  We investigate the design of a remote state estimation system for a self-propelled particle (SPP). Our framework consists of a sensing unit that accesses the full state of the SPP and an estimator that is remotely located from the sensing unit. The sensing unit must pay a cost when it chooses to transmit information on the state of the SPP to the estimator; and the estimator computes the best estimate of the state of the SPP based on received information. In this paper, we provide methods to design transmission policies and estimation rules for the sensing unit and estimator, respectively, that are optimal for a given cost functional that combines state estimation distortion and communication costs. We consider two notions of optimality: joint optimality and person-by-person optimality.\footnote{The precise definitions of joint optimality and person-by-person optimality are given in Definition \ref{def_joint_optimal} and Definition~\ref{def_pbp_optimal}, respectively.} Our main results show the existence of a jointly optimal solution and describe an iterative procedure to find a person-by-person optimal solution. In addition, we explain how the remote estimation scheme can be applied to tracking of animal movements over a costly communication link. We also provide experimental results to show the effectiveness of the scheme.
\end{abstract}

\section{Introduction}
Consider a self-propelled particle (SPP) moving in a two-dimensional plane whose state $\mathbf x_k$ is represented as follows:
$${\mathbf x_k = \begin{pmatrix} \mathbf p_{1,k} & \mathbf p_{2,k} & \boldsymbol{\theta}_{k} \end{pmatrix}^T \in \mathbb R^2 \times [0, 2\pi)}$$ 
where $\left( \mathbf p_{1,k}, \mathbf p_{2,k} \right)$ and $\boldsymbol{\theta}_{k} $ represent the location in the plane and the orientation at time $k$, respectively. The state of the SPP evolves according to the following model:
\begin{align} \label{eq_spp_model}
  \begin{pmatrix} \mathbf{p}_{1,k+1} \\ \mathbf{p}_{2,k+1} \\ \boldsymbol \theta_{k+1} \end{pmatrix} = \begin{pmatrix} \mathbf{p}_{1,k} + \mathbf v_k \cos \left( \boldsymbol \theta_{k} + \boldsymbol \phi_k \right) \\ \mathbf{p}_{2,k} + \mathbf v_k \sin \left( \boldsymbol \theta_{k} + \boldsymbol \phi_k \right) \\ \boldsymbol \theta_{k} +  \boldsymbol \phi_k \end{pmatrix}, ~ k \geq 0
\end{align}
with the initial condition $\mathbf x_0 = x_0 = \begin{pmatrix} p_{1,0} & p_{2,0} & \theta_{0} \end{pmatrix}^T$. The random processes $\mathbf v_k$ and $\boldsymbol{\phi}_k$ represent the translational and angular velocities, respectively.

In this paper, we consider a remote estimation system formed by a \textit{sensing unit} and \textit{remotely located estimator}: The sensing unit accesses $\mathbf x_k$ and has the authority to decide whether to transmit it to the estimator. The sequence of decisions on whether to transmit is represented by $\mathbf R_k$, for which $\mathbf R_k=1$ if the sensing unit decides to transmit and $\mathbf R_k=0$ otherwise. The cost of each transmission is represented by $c_k$. The estimator computes a state estimate $\hat{\mathbf x}_k = \begin{pmatrix} \hat{\boldsymbol{p}}_{1,k} & \hat{\boldsymbol{p}}_{2,k} & \hat{\boldsymbol \theta}_{k} \end{pmatrix}^T$ based on received information. The diagram in Fig. \ref{figure_remote_estimation} depicts the overall framework adopted here.

\subsection{Outline of Main Results}
Let a transmission policy $\boldsymbol{\mathcal T}_k$ and an estimation rule $\mathcal E_k$ for the sensing unit and estimator at time $k$, respectively, be defined as follows:
\begin{align*}
  \boldsymbol{\mathcal T}_k&: \left( \mathbb R^2 \times [0, 2\pi) \right)^{k+1} \times \{0, 1\}^{k-1} \to \{0, 1\} \\
  \mathcal E_k&: \left( \mathbb R^2 \times [0, 2\pi) \right)^{\left| \mathcal I^k \right|+1} \times \{0, 1\}^k \to \mathbb R^2 \times [0, 2\pi)
\end{align*}
\normalsize
where the variable ${\mathcal I^k = \left\{ \mathbf x_j \,\Big|\, \mathbf R_j = 1, \, 1 \leq j \leq k\right\}}$ represents information transmitted to the estimator up to time $k$.

\textbf{Our main goal} is to obtain methods to design transmission policies $\left( \boldsymbol{\mathcal T}_1, \cdots, \boldsymbol{\mathcal T}_N\right)$ and estimation rules $\left( \mathcal E_1, \cdots, \mathcal E_N \right)$ that are optimal for the following cost functional:
\begin{align} \label{eq_cost_functional}
   & \mathcal J \left( x_0, \left( \boldsymbol{\mathcal T}_{1}, \cdots, \boldsymbol{\mathcal T}_{N} \right), \left(\mathcal E_{1}, \cdots, \mathcal E_{N}\right) \right) \nonumber \\
   & =\sum_{k=1}^N \mathbb E \left[ d^2 \left( \mathbf x_k, \hat{\mathbf x}_k \right) + c_k \cdot \mathbf R_k \,\Big|\, \mathbf x_0=x_0, \left( \boldsymbol{\mathcal T}_{1}, \cdots, \boldsymbol{\mathcal T}_{N} \right), \left(\mathcal E_{1}, \cdots, \mathcal E_{N}\right) \right]
\end{align}
\normalsize
subject to the SPP model \eqref{eq_spp_model} and 
\begin{subequations} \label{eq_policy_rule}
  \begin{align} 
    \mathbf R_k &= \boldsymbol{\mathcal T}_k \left( \left( \mathbf x_0, \cdots, \mathbf x_k \right), \left(\mathbf R_1, \cdots, \mathbf R_{k-1} \right) \right) \label{eq_transmission_policy} \\
    \hat{\mathbf x}_k &= \mathcal E_k \left( \left( \mathbf x_0, \mathcal I^k \right), \left(\mathbf R_1, \cdots, \mathbf R_k \right) \right) \label{eq_estimation_rule}
  \end{align}
\end{subequations}
for each $k$ in $\{1, \cdots, N\}$, where we use Frobenius norm to define the metric $d$ as follows:
\begin{align*}
  & d \left( x_k, \hat x_k \right) = \left\| \begin{pmatrix} \cos \theta_k & -\sin \theta_k & p_{1,k} \\ 
      \sin \theta_k & \cos \theta_k & p_{2,k} \\
      0 & 0 & 1 \end{pmatrix} - \begin{pmatrix} \cos \hat \theta_k & -\sin \hat \theta_k & \hat p_{1,k} \\ 
      \sin \hat \theta_k & \cos \hat \theta_k & \hat p_{2,k} \\
      0 & 0 & 1 \end{pmatrix} \right\|_F
\end{align*}

\tikzstyle{plant} = [draw, text centered, circle, minimum height=2.5em, minimum width=2.5em, thick]
\tikzstyle{block} = [draw, text centered, rounded corners, minimum height=2.5em, minimum width=2.5em, thick]
\tikzstyle{dotted_block} = [draw, dashed, text centered, circle, minimum height=3em, minimum width=3em, thick]
\tikzstyle{block_empty} = [text centered, minimum height=3em, minimum width=3em]
\tikzstyle{vertex} = [circle, draw, inner sep=0, minimum width=.3em, thick, fill=black]

\def\blockdist{2em}

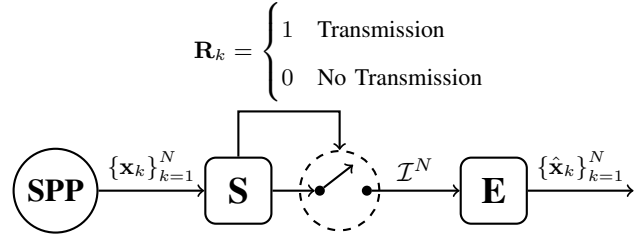
\begin{figure} [t]
  \centering
  \begin{tikzpicture} 
    \node (plant) [plant] {\textbf{\large SPP}};
    \node (sensing_unit) [block, right = 2.0*\blockdist of plant] {\textbf{\Large S}};
    \node (switch) [dotted_block, right = .5*\blockdist of sensing_unit] {};
    \node (estimator) [block, right = 1.5*\blockdist of switch] {\textbf{\Large E}};
    \node (dummy_block) [block_empty, right = 2.0*\blockdist of estimator] {};
    \node (vertex1) [vertex, right = .8*\blockdist of sensing_unit] {};
    \node (vertex2) [vertex, right = .7*\blockdist of vertex1] {};

    \draw [->, thick] (plant) -- node [above] {\small $\left\{ \mathbf x_k \right\}_{k=1}^N$} (sensing_unit);
    \draw [->, thick] (sensing_unit) -- (vertex1);
    \draw [->, thick] (vertex1) -- ++(.63*\blockdist, .49*\blockdist);
    \draw [->, thick] (vertex2) -- node [above] {$\mathcal I^N$} (estimator);
    \draw [->, thick] (sensing_unit) -- ++(0, 1.5*\blockdist) -| node [above] {\small $\mathbf{R}_k = \begin{cases} 1 & \text{Transmission} \\ 0 & \text{No Transmission} \end{cases}$} (switch);
    \draw [->, thick] (estimator) -- node [above] {\small $\left\{ \hat{\mathbf x}_k \right\}_{k=1}^N$} (dummy_block);
  \end{tikzpicture}

  \caption{A remote estimation framework comprised of a self-propelled particle (SPP), a sensing unit (S), and an estimator (E), where ${\mathcal I^N = \left\{ \mathbf x_k \,\Big|\, \mathbf R_k = 1, \, 1 \leq k \leq N\right\}}$.}
  \label{figure_remote_estimation}
\end{figure}

Our problem is non-trivial because \eqref{eq_cost_functional} is in general non-convex and searching for a solution that achieves the minimum over a function space is computationally complex. We adopt a team decision framework in which the sensing unit and the estimator are viewed as players. The following are our main contributions:
\begin{enumerate}
  \item First, we show that there is a jointly optimal solution which minimizes the cost functional \eqref{eq_cost_functional}. As joint optimality implies person-by-person optimality, this result ensures that the set of person-by-person optimal solutions is non-empty.
  
  \item We propose an iterative procedure, which is inspired by Lloyd's algorithm \cite{qiang_du1999_siam_review}, to compute a person-by-person optimal solution. The procedure alternates between finding the best transmission policies for \eqref{eq_cost_functional} with the estimation rules fixed, and vice versa; and it generates a sequence of sub-optimal solutions. Our analysis will show that the sequence has a convergent subsequence; and the limit of any convergent subsequence is a person-by-person optimal solution.

  \item We illustrate the performance of the optimal remote estimation scheme in the context of tracking of animal movements over a costly communication link. Our numerical results use GPS data collected from a monitoring device mounted on an African buffalo.
\end{enumerate}

\subsection{Paper Organization}
In Section \ref{section_formulation}, we describe the problem formulation considered throughout the paper, and briefly describe our methodology to find a solution. The main strategy is to decompose the problem into sub-problems, which we can solve sequentially. In Section \ref{section_two-player_stopping_problem}, we examine the existence of a jointly optimal solution to each sub-problem. We also describe an iterative procedure for finding a person-by-person optimal solution. Section \ref{section_experimental_results} discusses an application of our results to tracking of animal movements and also presents experimental results. 

\section{Problem Formulation} \label{section_formulation}
\subsection{Notation and Terminology}
\begin{itemize}
\item For a finite sequence of elements $a_1, \cdots, a_N$ belonging to a set, we adopt the shorthand notation ${a_{1:N} = \left( a_1, \cdots, a_N \right)}$.

\item For a finite sequence of functions $\mathcal A_1, \cdots, \mathcal A_N $ defined on a set, we adopt the shorthand notation ${\mathcal A_{1:N} = \left( \mathcal A_1, \cdots, \mathcal A_N \right)}$.


\item For $\left\{ \mathbf R_j \right\}_{j=1}^{k-1}$, we define\footnote{We adopt a convention that $\boldsymbol \tau_k = 0$ if $\mathbf R_j=0$ for all $j$ in ${\{1, \cdots, k-1\}}$.}
$${\boldsymbol \tau_k = \max \left\{1 \leq j \leq k-1 \,\Big|\, \mathbf R_j = 1\right\}}$$
We refer to $\boldsymbol \tau_k$ as the \textit{last transmission time} before time $k$.


\end{itemize}

\subsection{Problem Description}
We start by assuming that transmission policies and estimation rules have the following structure\footnote{We do not lose any optimality from imposing these structures. This can be verified by similar arguments as in Lemma 1 and Lemma 3 of \cite{molin2012_ifac}.}: The transmission policy, which may be randomized
\footnote{See Appendix \ref{remark_randomized_transmission_policies} for a detailed description of randomized transmission policies policies.}, 
at time $k$ depends on the last transmission time $\boldsymbol{\tau}_k$, the information $\mathbf x_{\boldsymbol{\tau}_k}$ transmitted to the estimator at time $\boldsymbol \tau_k$, and the current state $\mathbf x_k$ of the SPP. The estimation rule at time $k$ depends on the last transmission time $\boldsymbol{\tau}_k$ and the information $\mathbf x_{\boldsymbol{\tau}_k}$ received from the sensing unit at time $\boldsymbol{\tau}_k$. 

According to \eqref{eq_policy_rule} and the structural assumptions mentioned above, the variable $\mathbf R_k$ and estimate $\hat{\mathbf x}_k$ are determined by a transmission policy $\boldsymbol{\mathcal T}_k$ and an estimation rule $\mathcal E_k$ as follows:
\begin{subequations} \label{eq_policy_rule_02}
  \begin{align}
    \mathbf R_k &= \boldsymbol{\mathcal T}_k \left( \boldsymbol{\tau}_k, \mathbf x_{\boldsymbol\tau_k},  \mathbf x_k \right) \label{eq_transmission_policy_02} \\
    \hat{\mathbf x}_k &= \begin{cases} \mathcal E_k \left( \boldsymbol{\tau}_k, \mathbf x_{\boldsymbol\tau_k}\right) & \text{if } \mathbf R_k=0 \\ \mathbf x_k & \text{otherwise} \end{cases}
  \end{align}
\end{subequations}
\normalsize
We formally state our main problem as follows.

\noindent \hrulefill

\begin{prob} \label{prob_remote_estimation}
  Find transmission policies $\boldsymbol{\mathcal T}_{1:N}$ and estimation rules $\mathcal E_{1:N}$ that are optimal for the cost functional \eqref{eq_cost_functional} subject to the SPP model \eqref{eq_spp_model} with the initial condition $\mathbf x_0 = x_0$ and \eqref{eq_policy_rule_02}.\footnote{The underlying SPP model and the initial condition $\mathbf x_0 = x_0$ are common knowledge to both the sensing unit and estimator.}
\end{prob}

\noindent \hrulefill

We consider the following two notions of optimality for Problem \ref{prob_remote_estimation}.
\begin{mydef} \label{def_joint_optimal}
  We say that transmission policies $\boldsymbol{\mathcal T}_{1:N}^\ast$ and estimation rules $\mathcal E_{1:N}^\ast$ are \textit{jointly optimal} for \eqref{eq_cost_functional} if they achieve the global minimum for every $x_0$ in $\mathbb R^2 \times [0, 2\pi)$.
\end{mydef}

\begin{mydef} \label{def_pbp_optimal}
  We say that transmission policies $\boldsymbol{\mathcal T}_{1:N}^\ast$ and estimation rules $\mathcal E_{1:N}^\ast$ are \textit{person-by-person optimal} for \eqref{eq_cost_functional} if the following relations hold for every $x_0$ in $\mathbb R^2 \times [0, 2\pi)$:
  \begin{align} \label{eq_def_pbp_optimal}
    \mathcal J \left( x_0, \boldsymbol{\mathcal T}_{1:N}^\ast, \mathcal E_{1:N}^\ast \right) &= \min_{\boldsymbol{\mathcal T}_{1:N}} \mathcal J \left( x_0, \boldsymbol{\mathcal T}_{1:N}, \mathcal E_{1:N}^\ast \right) \nonumber \\
    &= \min_{\mathcal E_{1:N}} \mathcal J \left( x_0, \boldsymbol{\mathcal T}_{1:N}^\ast, \mathcal E_{1:N} \right)
  \end{align}
  \normalsize
  Equation \eqref{eq_def_pbp_optimal} implies that with the transmission policies $\boldsymbol{\mathcal T}_{1:N}^\ast$ fixed, the estimation rules $\mathcal E_{1:N}^\ast$ minimize the cost functional \eqref{eq_cost_functional}, and vice versa.
\end{mydef}

We maintain the following assumption throughout the paper.
\begin{assump} \label{assump_transition_probability}
  Let $\mathfrak B$ be a Borel $\sigma$-algebra on ${\mathbb R^2 \times [0, 2\pi)}$. We assume that $\mathbf v_k$ and $\boldsymbol{\phi}_k$ in \eqref{eq_spp_model} are random processes for which the following hold for every $k$ in $\{1, \cdots, N\}$:
  \begin{enumerate}
  \item For every Borel set $\mathbb A$ in $\mathfrak B$, the function ${x \mapsto \mathbb P \left( \mathbf x_k \in \mathbb A \,\Big|\, \mathbf x_{k-1} = x \right)}$ is well-defined and continuous.

  \item For every non-empty open set $\mathbb O$, the function ${x \mapsto \mathbb P \left( \mathbf x_k \in \mathbb O \,\Big|\, \mathbf x_{k-1} = x \right)}$ is positive for all $x$ in $\mathbb R^2 \times [0, 2\pi)$.
  \end{enumerate}
\end{assump}

To find a solution to Problem \ref{prob_remote_estimation}, we decompose the problem into a set of $N$ sub-problems, which we solve sequentially. We start by describing the so-called \textit{Two-Player Optimal Stopping Problem}, which we use to decompose Problem \ref{prob_remote_estimation} into sub-problems. We then describe how to obtain a solution to Problem \ref{prob_remote_estimation} by solving the sub-problems.

\noindent\hrulefill

\begin{prob} [Two-Player Optimal Stopping Problem] \label{prob_two-player_stopping}
  Given positive real numbers $\left\{ c_j'\right\}_{j=k}^N$, find policies $\boldsymbol{\mathcal T}_{k:N}^{<k-1>}$ and rules $\mathcal E_{k:N}^{<k-1>}$ that are optimal for the following cost functional:
  \begin{align} \label{eq_cost_functional_two-player_stopping}
    &\mathcal J_k \left( x_{k-1}, \boldsymbol{\mathcal T}_{k:N}^{<k-1>}, \mathcal E_{k:N}^{<k-1>} \right) \nonumber \\
    &= \mathbb E \left[ \sum_{j=k}^{\mathbf K} d^2 \left( \mathbf x_j, \hat{\mathbf x}_j \right) + c_{\mathbf K}' \cdot \mathbf R_{\mathbf K} \,\Bigg|\, \mathbf x_{k-1} = x_{k-1}, \boldsymbol{\mathcal T}_{k:N}^{<k-1>}, \mathcal E_{k:N}^{<k-1>} \right]
  \end{align}
  \normalsize
  subject to \eqref{eq_spp_model} with the initial condition $\mathbf x_{k-1} = x_{k-1}$ and 
  \begin{subequations} \label{eq_policy_rule_03}
    \begin{align}
      \mathbf R_j &= \boldsymbol{\mathcal T}_j^{<k-1>} \left( x_{k-1}, \mathbf x_j \right) \\
      \hat{\mathbf x}_j &= \begin{cases} \mathcal E_j^{<k-1>} \left( x_{k-1} \right) & \text{if } \mathbf R_j=0 \\ \mathbf x_j & \text{otherwise} \end{cases}
    \end{align}
  \end{subequations}
  \normalsize
  for each $j$ in $\{k, \cdots, N\}$, where\footnote{We adopt a convention that $\mathbf K = N$ if $\mathbf R_j=0$ for all $j$ in $\{k, \cdots, N\}$.}
  {$$\mathbf K = \min \left\{ k \leq j \leq N \,\Big|\, \mathbf R_j = 1 \right\}$$}
\end{prob}

\noindent\hrulefill

Note that the total expected cost \eqref{eq_cost_functional_two-player_stopping} consists of \textit{running costs} $d^2 \left( \mathbf x_j, \hat{\mathbf x}_j \right)$ and \textit{stopping costs} $c_j'$.

Similar to Definitions \ref{def_joint_optimal} and \ref{def_pbp_optimal}, we adopt two notions of optimality for Problem \ref{prob_two-player_stopping} as follows.
\begin{mydef} \label{def_joint_optimal_stopping}
  We say that policies ${\boldsymbol{\mathcal T}^\ast}_{k:N}^{<k-1>}$ and rules ${\mathcal E^\ast}_{k:N}^{<k-1>}$ are \textit{jointly optimal} for \eqref{eq_cost_functional_two-player_stopping} if they achieve the global minimum for every $x_{k-1}$ in $\mathbb R^2 \times [0, 2\pi)$.
\end{mydef}

\begin{mydef} \label{def_pbp_optimal_stopping}
  We say that policies ${\boldsymbol{\mathcal T}^\ast}_{k:N}^{<k-1>}$ and rules ${\mathcal E^\ast}_{k:N}^{<k-1>}$ are \textit{person-by-person optimal} for \eqref{eq_cost_functional_two-player_stopping} if the following relations hold for every $x_{k-1}$ in $\mathbb R^2 \times [0, 2\pi)$:
  \begin{align} 
    \mathcal J_k \left( x_{k-1}, {\boldsymbol{\mathcal T}^\ast}_{k:N}^{<k-1>}, {\mathcal E^\ast}_{k:N}^{<k-1>} \right) &= \min_{\boldsymbol{\mathcal T}_{k:N}^{<k-1>}} \mathcal J_k \left( x_{k-1}, {\boldsymbol{\mathcal T}}_{k:N}^{<k-1>}, {\mathcal E^\ast}_{k:N}^{<k-1>} \right) \nonumber \\
    &= \min_{{\mathcal E}_{k:N}^{<k-1>}} \mathcal J_k \left( x_{k-1}, {\boldsymbol{\mathcal T}^\ast}_{k:N}^{<k-1>}, {\mathcal E}_{k:N}^{<k-1>} \right)
  \end{align}
  \normalsize
\end{mydef}




To explain how to decompose Problem \ref{prob_remote_estimation} into sub-problems, we consider recursive computations of constants $\left\{ c_j' \right\}_{j=k}^N$ described as follows: Suppose that policies $\boldsymbol{\mathcal T}_{j+1:N}^{<j>}$ and rules $\mathcal E_{j+1:N}^{<j>}$ are given for all $j$ in ${\{k, \cdots, N\}}$. By proceeding backwards from $j=N$ to $j=k$, for each step $j$, let us compute
\begin{align} \label{eq_stopping_cost}
  c_j' &= c_j + \mathcal J_{j+1} \left( 0, \boldsymbol{\mathcal T}_{j+1:N}^{<j>}, \mathcal E_{j+1:N}^{<j>} \right)
\end{align}
\normalsize
with $c_N' = c_N$, where $c_j$ is given in \eqref{eq_cost_functional} and $\mathcal J_{j+1}$ is defined in \eqref{eq_cost_functional_two-player_stopping}. In computing $\mathcal J_{j+1} \left( 0, \boldsymbol{\mathcal T}_{j+1:N}^{<j>}, \mathcal E_{j+1:N}^{<j>} \right)$, we use the constants $\left\{ c_l' \right\}_{l=j+1}^N$ that are obtained in preceding steps.

We describe the $k$-th sub-problem of Problem \ref{prob_remote_estimation} as follows.

\noindent\hrulefill

\noindent\textbf{Sub-problem $k$: } Given policies $\boldsymbol{\mathcal T}_{j+1:N}^{<j>}$ and rules $\mathcal E_{j+1:N}^{<j>}$ for all $j$ in $\{k, \cdots, N\}$, let us compute constants $\left\{ c_j' \right\}_{j=k}^N$ according to \eqref{eq_stopping_cost}. Using $\left\{c_j' \right\}_{j=k}^N$, find a solution $\boldsymbol{\mathcal T}_{k:N}^{<k-1>}$ and $\mathcal E_{k:N}^{<k-1>}$ to Problem~\ref{prob_two-player_stopping}.

\noindent\hrulefill \newline

\textbf{Our main strategy for solving} Problem \ref{prob_remote_estimation} is as follows:  We solve each Sub-problem $k$ backwards in time from ${k=N}$ to $k=1$, where for each Sub-problem $k$ we use solutions to all preceding sub-problems, i.e., $\boldsymbol{\mathcal T}_{j+1:N}^{<j>}$ and $\mathcal E_{j+1:N}^{<j>}$ for all $j$ in $\{k, \cdots, N\}$, to compute the constants $\left\{ c_j' \right\}_{j=k}^N$. Once solutions to all the sub-problems are found, we determine transmission policies $\boldsymbol{\mathcal T}_{1:N}$ and estimation rules $\mathcal E_{1:N}$ for Problem~\ref{prob_remote_estimation} in the following way:
\begin{subequations} \label{eq_solutions}
  \begin{align} 
    \boldsymbol{\mathcal T}_j \left(k-1, x_{k-1}, x_j \right) = \boldsymbol{\mathcal T}_j^{<k-1>} \left(x_{k-1}, x_j \right) \\
    \mathcal E_j \left(k-1, x_{k-1} \right) = \mathcal E_j^{<k-1>} \left( x_{k-1} \right)
  \end{align}
\end{subequations}
for each $j$ in $\{k, \cdots, N\}$ and $k$ in $\{1, \cdots, N\}$. It can be verified that the transmission policies and estimation rules determined by \eqref{eq_solutions} are a solution to Problem \ref{prob_remote_estimation}.


\subsection{Brief Survey of Related Work}

Finite time-horizon problem formulations are considered in \cite{lipsa2011_ieee_tac, molin2012_ifac, nayyar2013_ieee_tac, spark7040013}. The authors of \cite{lipsa2011_ieee_tac} found a jointly optimal solution for a remote estimation problem under first-order linear processes driven by Gaussian noise where it is shown that transmission policies of jointly optimal solutions are of threshold-type. An iterative procedure for finding transmission policies and estimation rules was proposed in \cite{molin2012_ifac}. The authors performed a convergence analysis on the proposed procedure in the same problem formulation of \cite{lipsa2011_ieee_tac}, which essentially leads to an alternative proof of the main results of \cite{lipsa2011_ieee_tac}. The work of \cite{nayyar2013_ieee_tac} considered a problem setting in which the sensing unit has an energy harvesting capability. Preliminary results of our work were presented in \cite{spark7040013} under some technical assumptions.\footnote{The analysis and results presented in this work can be readily extended to the problem formulation considered in \cite{spark7040013}. Due to the space constraints, we will focus on SPP models.}

Infinite time-horizon formulations are considered in \cite{yonggang_xu2004_ieee_cdc, cogil2007_acc, lichun_li2011_ieee_cdc-ecc, lichun_li2013_acc}.  The authors of \cite{yonggang_xu2004_ieee_cdc} studied the structure of optimal transmission policies for a remote estimation problem under linear processes driven by Gaussian noise, and proposed a procedure based on the value iteration algorithm to compute an optimal policy. In \cite{cogil2007_acc}, an algorithm for finding a sub-optimal solution was proposed. The authors showed that when the underlying process is linear and driven by Gaussian noise, the proposed algorithm incurs a cost that is within a constant factor of the optimum. While the question of whether transmission policies of jointly optimal solutions are of threshold-type for the problems under multi-dimensional linear processes remains unanswered, the authors of \cite{lichun_li2011_ieee_cdc-ecc} analyzed the performance of threshold-type transmission policies for such problems. In \cite{lichun_li2013_acc}, the authors proposed a polynomial approximation-based method to find sub-optimal transmission policies.

Our problem formulation and methods are distinguished from previous ones found in literature by the following facts: 

\begin{enumerate}
\item We adopt a random process model that is nonlinear.

\item We do not impose any structural assumptions on transmission policies and estimation rules that result in the loss of optimality.

\item We investigate optimization of a given performance criterion over both transmission policies and estimation rules.
\end{enumerate}

\section{Two-Player Optimal Stopping Problem} \label{section_two-player_stopping_problem}
In this section, we investigate Sub-problem $k$ in which, to determine the constants $\left\{ c_j' \right\}_{j=k}^N$, we use solutions to the preceding sub-problems -- Sub-problem $N$ to Sub-problem~$k+1$. We start by re-writing \eqref{eq_cost_functional_two-player_stopping} into a suitable form using the following definition.
\begin{mydef} \label{def_solutions}
  For each $j$ in $\{k, \cdots, N\}$, we define a (random) function ${\boldsymbol{\mathcal P}_j:\mathbb R^2 \times [0, 2\pi) \to \{0, 1\}}$ and a variable $\hat x_j $ in $\mathbb R^2 \times [0, 2\pi)$ as follows:
  \begin{subequations} \label{eq_decision_sets_estimates}
    \begin{align}
      \boldsymbol{\mathcal P}_j \left( x_j \right) &= \boldsymbol{\mathcal T}_j^{<k-1>} \left( 0, x_j  \right) \\
      \hat x_j &= \mathcal E_j^{<k-1>} \left( 0 \right)
    \end{align}
  \end{subequations}
  We refer to $\boldsymbol{\mathcal P}_j$ and $\hat x_j$ as the \textit{(randomized) policy} and \textit{estimate} at time $j$ (for the initial condition $\mathbf x_{k-1}=0$), respectively.
\footnote{See Appendix \ref{remark_randomized_transmission_policies} for a detailed description of randomized policies.} 
\end{mydef}

Given that $\mathbf x_{k-1}=0$, we can re-write \eqref{eq_cost_functional_two-player_stopping} as follows:\footnote{For concise presentation, we will omit the dependence of the cost functional \eqref{eq_cost_functional_two-player_stopping_02} on the initial condition unless it is necessary.}
\begin{align} \label{eq_cost_functional_two-player_stopping_02}
  \mathbb E_{\mathbf x_k} \left[ J_k \left(\mathbf x_{k}, \boldsymbol{\mathcal P}_{k:N}, \hat x_{k:N} \right) \right]
\end{align}
subject to \eqref{eq_spp_model} with the initial condition $\mathbf x_{k-1} = 0$ and 
\begin{align} \label{eq_decision_variables_subproblem}
  \mathbf R_j = \boldsymbol{\mathcal P}_j \left( \mathbf x_j  \right)
\end{align}
for each $j$ in $\{k, \cdots, N\}$, where $J_k$ is recursively defined as follows:
\begin{align*}
  &J_j \left(x_{j}, \boldsymbol{\mathcal P}_{j:N}, \hat x_{j:N} \right) \nonumber \\ 
  &= \left( d^2\left( x_{j}, \hat x_{j} \right) + \mathbb E_{\mathbf x_{j+1}} \left[ J_{j+1} \left( \mathbf x_{j+1}, \boldsymbol{\mathcal P}_{j+1:N}, \hat x_{j+1:N} \right) \,\Big|\, \mathbf x_j = x_j \right] \right) \cdot \left(1-\mathbf R_j\right) + c_j' \cdot \mathbf R_j
\end{align*}
\normalsize
for each $j$ in $\{k, \cdots, N\}$ with $J_{N+1} = 0$. Note that $J_j$ satisfies the following for every $j$ in $\{k, \cdots, N\}$:
\begin{align} \label{eq_cost-to-go_01}
  &\mathbb E_{\mathbf x_j} \left[ J_j \left(\mathbf x_j, \boldsymbol{\mathcal P}_{j:N}, \hat x_{j:N} \right) \,\Big|\, \mathbf R_k=0, \cdots, \mathbf R_{j-1}=0 \right] \nonumber \\
  &= \Big( \mathbb E_{\mathbf x_j} \left[ d^2\left( \mathbf x_j, \hat x_j \right) \,\Big|\, \mathbf R_k=0, \cdots, \mathbf R_j=0  \right] + \mathbb E_{\mathbf x_{j+1}} \left[ J_{j+1} \left( \mathbf x_{j+1}, \boldsymbol{\mathcal P}_{j+1:N}, \hat x_{j+1:N} \right) \,\Big|\, \mathbf R_k=0, \cdots, \mathbf R_j=0 \right] \Big) \nonumber \\
  &\quad \cdot \mathbb P\left(\mathbf R_j=0 \,\Big|\, \mathbf R_k=0, \cdots, \mathbf R_{j-1}=0\right) + c_j' \cdot \mathbb P\left(\mathbf R_j=1 \,\Big|\, \mathbf R_k=0, \cdots, \mathbf R_{j-1}=0\right)
\end{align}

We will proceed with finding an optimal solution $\boldsymbol{\mathcal P}_{k:N}^\ast$ and $\hat x_{k:N}^\ast$ for \eqref{eq_cost_functional_two-player_stopping_02}. Remark \ref{remark_solution_transformation} given below explains how we can derive a solution to Sub-problem $k$ from $\boldsymbol{\mathcal P}_{k:N}^\ast$ and $\hat x_{k:N}^\ast$.

\begin{remark} \label{remark_solution_transformation}
  Consider the transformations given below:
  \begin{align*}
    M \left( x_{k-1}, x_{j} \right) &= \begin{pmatrix} \cos \theta_{k-1} & \sin \theta_{k-1} & 0 \\ 
      -\sin \theta_{k-1} & \cos \theta_{k-1} & 0 \\
      0 & 0 & 1 \end{pmatrix} \cdot \begin{pmatrix} p_{1,j} - p_{1,k-1} \\ p_{2,j} - p_{2,k-1}\\ \theta_j - \theta_{k-1} \end{pmatrix} \\
    M^\dagger \left( x_{k-1}, x_{j} \right) &= \begin{pmatrix} \cos \theta_{k-1} & -\sin \theta_{k-1} & 0 \\ 
      \sin \theta_{k-1} & \cos \theta_{k-1} & 0 \\
      0 & 0 & 1 \end{pmatrix} \cdot \begin{pmatrix} p_{1,j} \\ p_{2,j} \\ \theta_j \end{pmatrix} + \begin{pmatrix} p_{1,k-1} \\ p_{2,k-1}\\ \theta_{k-1} \end{pmatrix}
  \end{align*}
  \normalsize
  Suppose that $\boldsymbol{\mathcal P}_{k:N}^\ast$ and $\hat x_{k:N}^\ast$ are optimal policies and estimates for \eqref{eq_cost_functional_two-player_stopping_02}, respectively, and that a solution to Sub-problem $k$ are determined as follows: For each $j$ in $\{k, \cdots, N\}$,
  \begin{subequations} \label{eq_remark_solution_transformation_03}
    \begin{align}
      {\boldsymbol{\mathcal T}^\ast}_j^{<k-1>} \left( x_{k-1}, x_j \right) &= \boldsymbol{\mathcal P}_j^\ast \left( M\left( x_{k-1}, x_j \right) \right) \\
      {\mathcal E^\ast}_j^{<k-1>} \left( x_{k-1} \right) &= M^\dagger \left( x_{k-1}, \hat x_j^\ast \right)
    \end{align}
  \end{subequations}

  Based on Definition \ref{def_solutions}, it can be verified that the following holds for all $x_{k-1}$ in $\mathbb R^2 \times [0, 2\pi)$:
  \begin{align*}
    \mathcal J_k \left( x_{k-1}, {\boldsymbol{\mathcal T}^\ast}_{k:N}^{<k-1>}, {\mathcal E^\ast}_{k:N}^{<k-1>} \right) =\mathbb E_{\mathbf x_k} \left[J_k \left(\mathbf x_k, \boldsymbol{\mathcal P}_{k:N}^\ast, \hat x_{k:N}^\ast \right) \right]
  \end{align*}
  where $\mathcal J_k$ is defined in \eqref{eq_cost_functional_two-player_stopping}. This implies that the value of \eqref{eq_cost_functional_two-player_stopping} evaluated at an optimal solution does not depend on the initial condition; and by finding an optimal solution for the sub-problem with the initial condition ${\mathbf x_{k-1}=0}$, we can derive a solution to Sub-problem $k$ using \eqref{eq_remark_solution_transformation_03}.   \hfill $\blacksquare$
\end{remark}

\subsection{Definitions and Preliminary Results}
We restate Definition \ref{def_joint_optimal_stopping} and Definition \ref{def_pbp_optimal_stopping} as follows.
\begin{mydef} \label{def_joint_optimal_sub}
  We say that policies $\boldsymbol{\mathcal P}_{k:N}^\ast$ and estimates $\hat x_{k:N}^\ast$ are \textit{jointly optimal} for \eqref{eq_cost_functional_two-player_stopping_02} if they achieve the global minimum.
\end{mydef}

\begin{mydef} \label{def_pbp_optimal_sub}
  We say that policies $\boldsymbol{\mathcal P}_{k:N}^\ast$ and estimates $\hat x_{k:N}^\ast$ are \textit{person-by-person optimal} for \eqref{eq_cost_functional_two-player_stopping_02} if the following relations hold:
  \begin{align*}
    \mathbb E_{\mathbf x_k} \left[ J_k \left(\mathbf x_k, \boldsymbol{\mathcal P}_{k:N}^\ast, \hat x_{k:N}^\ast \right) \right] &= \min_{\boldsymbol{\mathcal P}_{k:N}} \mathbb E_{\mathbf x_{k}} \left[ J_k\left(\mathbf x_k, \boldsymbol{\mathcal P}_{k:N}, \hat x_{k:N}^\ast \right) \right] \nonumber \\ 
    & = \min_{\hat x_{k:N}} \mathbb E_{\mathbf x_k} \left[ J_k \left(\mathbf x_k, \boldsymbol{\mathcal P}_{k:N}^\ast, \hat x_{k:N} \right) \right]
  \end{align*}
\end{mydef}

In what follows, we define \textit{best response mappings} $\boldsymbol{\mathfrak P}$ and $\mathfrak X$.

\begin{mydef} \label{def_best_response_mapping_P}
  Given estimates $\hat x_{k:N}$, we define $\boldsymbol{\mathfrak P} \left( \hat x_{k:N} \right)$ as the collection of policies $\boldsymbol{\mathcal P}_{k:N}$ satisfying
  \begin{align*}
    \mathbb E_{\mathbf x_k} \left[ J_k \left(\mathbf x_k, \boldsymbol{\mathcal P}_{k:N}, \hat x_{k:N} \right) \right] = \min_{\boldsymbol{\mathcal P}_{k:N}'} \mathbb E_{\mathbf x_{k}} \left[ J_k\left(\mathbf x_k, \boldsymbol{\mathcal P}_{k:N}', \hat x_{k:N} \right) \right]
  \end{align*}
\end{mydef}

\begin{mydef} \label{def_best_response_mapping_X}
  Given policies $\boldsymbol{\mathcal P}_{k:N}$, we define $\mathfrak X \left( \boldsymbol{\mathcal P}_{k:N} \right)$ as the collection of estimates $\hat x_{k:N}$ satisfying
  \begin{align*}
    \mathbb E_{\mathbf x_k} \left[ J_k \left(\mathbf x_k, \boldsymbol{\mathcal P}_{k:N}, \hat x_{k:N} \right) \right] = \min_{\hat x_{k:N}'} \mathbb E_{\mathbf x_{k}} \left[ J_k\left(\mathbf x_k, \boldsymbol{\mathcal P}_{k:N}, \hat x_{k:N}' \right) \right]
  \end{align*}
\end{mydef}

\begin{mydef} \label{def_degeneracy}
  Policies $\boldsymbol{\mathcal P}_{k:N}$ are said to be \textit{degenerate} if there exists $j_0$ in $\{k, \cdots, N\}$ for which it holds that
  \begin{align} \label{eq_def_degeneracy_01}
    \mathbb P \left( \mathbf R_{j_0}=0 \,\Big|\, \mathbf R_k=0, \cdots, \mathbf R_{j_0-1}=0 \right) = 0
  \end{align}
\end{mydef}

\begin{remark} \label{remark_degeneracy}
  Let $\boldsymbol{\mathcal P}_{k:N}$ be degenerate policies such that \eqref{eq_def_degeneracy_01} holds for $j_0$ in $\{k, \cdots, N\}$. From \eqref{eq_cost-to-go_01}, we can derive that
  \begin{align*}
    \mathbb E_{\mathbf x_{j_0}} \left[ J_{j_0} \left(\mathbf x_{j_0}, \boldsymbol{\mathcal P}_{j_0:N}, \hat x_{j_0:N} \right) \,\Big|\, \mathbf R_k=0, \cdots, \mathbf R_{j_0-1}=0 \right] = c_{j_0}'
  \end{align*}
  \normalsize
  from which we can infer that the cost \eqref{eq_cost_functional_two-player_stopping_02} does not depend on the choice of estimates $\hat x_{j_0:N}$.
\end{remark}

\begin{prop} \label{prop_best_response_mapping_P}
  Suppose that non-degenerate policies $\boldsymbol{\mathcal P}_{k:N}$ and estimates $\hat x_{k:N}$ are given. The policies $\boldsymbol{\mathcal P}_{k:N}$ belong to $\boldsymbol{\mathfrak P} \left( \hat x_{k:N} \right)$ if and only if the following holds for all $j$ in $\{k, \cdots, N\}$:
  \begin{align} \label{eq_prop_best_response_mapping_P_01}
    &\mathbb E_{\mathbf x_j} \left[J_j \left( \mathbf x_j, \boldsymbol{\mathcal P}_{j:N}, \hat x_{j:N} \right) \,\Big|\, \mathbf R_k=0, \cdots, \mathbf R_{j-1}=0 \right] \nonumber \\
    &= \mathbb E_{\mathbf x_j} \left[J_j^\ast \left( \mathbf x_j, \hat{x}_{j:N} \right) \,\Big|\, \mathbf R_k=0, \cdots, \mathbf R_{j-1}=0 \right]
  \end{align}
  where for each $j$ in $\{k, \cdots, N\}$,
  \begin{align} \label{eq_cost-to-go_02}
    &J_j^\ast \left( x_j, \hat{x}_{j:N} \right) = \min \left\{ d^2 \left( x_j, \hat x_j \right) + \mathbb E_{\mathbf x_{j+1}} \left[ J_{j+1}^\ast \left( \mathbf x_{j+1}, \hat x_{j+1:N} \right) \,\Big|\, \mathbf x_j=x_j \right], c_j' \right\}
  \end{align}
  with $J_{N+1}^\ast = 0$
  \normalsize
\end{prop}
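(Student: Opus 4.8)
The plan is to recognize that, once the estimates $\hat x_{k:N}$ are frozen, minimizing $\mathbb E_{\mathbf x_k}\left[ J_k(\mathbf x_k, \boldsymbol{\mathcal P}_{k:N}, \hat x_{k:N}) \right]$ over policies is a finite-horizon optimal stopping problem for the Markov chain \eqref{eq_spp_model}, and that \eqref{eq_cost-to-go_02} is exactly its Bellman recursion. I would first introduce the conditional cost-to-go $W_j(x_j) := \mathbb E\left[ J_j(\mathbf x_j, \boldsymbol{\mathcal P}_{j:N}, \hat x_{j:N}) \,\big|\, \mathbf x_j = x_j \right]$, the expectation averaging over the (possibly randomized) decisions and future states. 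Writing $p_j(x_j)\in[0,1]$ for the probability that $\boldsymbol{\mathcal P}_j$ transmits at $x_j$, the defining recursion of $J_j$ gives $W_j(x_j) = (1-p_j(x_j))\bigl( d^2(x_j,\hat x_j) + \mathbb E[W_{j+1}(\mathbf x_{j+1}) \mid \mathbf x_j = x_j] \bigr) + p_j(x_j)\, c_j'$.

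Step~1 is a backward induction from $j=N$ (where $W_{N+1}=J_{N+1}^\ast=0$) establishing the pointwise bound $W_j(x_j) \ge J_j^\ast(x_j)$, using $(1-p)A + pB \ge \min\{A,B\}$ together with the inherited monotonicity $W_{j+1} \ge J_{j+1}^\ast$. The same step records the equality conditions: $W_j(x_j)=J_j^\ast(x_j)$ forces $p_j(x_j)$ to select the smaller of $d^2(x_j,\hat x_j)+\mathbb E[J_{j+1}^\ast\mid \mathbf x_j=x_j]$ and $c_j'$, and, whenever $p_j(x_j)<1$, forces $W_{j+1}=J_{j+1}^\ast$ almost surely under the transition out of $x_j$. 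Using the Markov property of \eqref{eq_spp_model}, I would also note $\mathbb E[J_j \mid \mathbf R_k=0,\dots,\mathbf R_{j-1}=0] = \mathbb E[W_j(\mathbf x_j) \mid \mathbf R_k=0,\dots,\mathbf R_{j-1}=0]$, so that the identity \eqref{eq_prop_best_response_mapping_P_01} is equivalent to $W_j=J_j^\ast$ holding $\mu_j$-almost everywhere, where $\mu_j$ denotes the law of $\mathbf x_j$ conditioned on no transmission before time $j$ (well-defined and of positive mass by non-degeneracy).

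Step~2 records the standard dynamic programming fact $\min_{\boldsymbol{\mathcal P}_{k:N}} \mathbb E_{\mathbf x_k}[J_k] = \mathbb E_{\mathbf x_k}[J_k^\ast]$: the lower bound is immediate from Step~1, and the threshold policy $p_j(x_j)=\mathbf 1\{ d^2(x_j,\hat x_j)+\mathbb E[J_{j+1}^\ast\mid \mathbf x_j=x_j] > c_j' \}$ attains it. Hence a non-degenerate $\boldsymbol{\mathcal P}_{k:N}$ lies in $\boldsymbol{\mathfrak P}(\hat x_{k:N})$ if and only if $\mathbb E_{\mathbf x_k}[W_k] = \mathbb E_{\mathbf x_k}[J_k^\ast]$, which by the pointwise bound holds iff $W_k=J_k^\ast$ $\mu_k$-almost everywhere, i.e.\ the case $j=k$ of \eqref{eq_prop_best_response_mapping_P_01}. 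This already yields the ``if'' direction, and reduces the ``only if'' direction to propagating the equality from $j=k$ to every $j$.

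Step~3 is the forward propagation, which I expect to be the main obstacle. Assuming $W_j=J_j^\ast$ $\mu_j$-a.e., the equality conditions of Step~1 give, for $\mu_j$-a.e.\ $x_j$ with $p_j(x_j)<1$, that $W_{j+1}=J_{j+1}^\ast$ almost surely under the transition kernel out of $x_j$. Since $\mu_{j+1}$ is the mixture of these kernels weighted by $(1-p_j(x_j))$ and normalized by $\mathbb P(\mathbf R_j=0 \mid \mathbf R_k=0,\dots,\mathbf R_{j-1}=0)$, non-degeneracy (Definition~\ref{def_degeneracy}) makes this normalizing probability positive, and a Fubini argument shows the ``bad set'' $\{W_{j+1}\neq J_{j+1}^\ast\}$ is $\mu_{j+1}$-null. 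The delicate point is precisely this exchange of a pointwise-in-$x_j$ conclusion for a $\mu_{j+1}$-a.e.\ statement; non-degeneracy is essential here, as Remark~\ref{remark_degeneracy} shows that at a degenerate stage the estimates, and hence the equality, become irrelevant. Iterating from $j=k$ upward yields $W_j=J_j^\ast$ $\mu_j$-a.e., that is \eqref{eq_prop_best_response_mapping_P_01}, for every $j$, completing the proof.
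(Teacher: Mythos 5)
Your argument is correct and is essentially the same route the paper takes: the paper omits the details, saying only that the result follows from the recursion \eqref{eq_cost-to-go_01}, Definition \ref{def_best_response_mapping_P}, and the dynamic-programming identity $\min_{\boldsymbol{\mathcal P}_{k:N}'} \mathbb E_{\mathbf x_k}[J_k] = \mathbb E_{\mathbf x_k}[J_k^\ast]$, which are precisely your pointwise cost-to-go recursion, Step~2, and the backward/forward induction you carry out. Your write-up is a faithful (and more explicit) expansion of that sketch, correctly isolating where non-degeneracy enters in the forward propagation.
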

The proof follows from \eqref{eq_cost-to-go_01}, Definition \ref{def_best_response_mapping_P}, and the fact that
\begin{align*}
  \min_{\boldsymbol{\mathcal P}_{k:N}'} \mathbb E_{\mathbf x_{k}} \left[ J_k\left(\mathbf x_k, \boldsymbol{\mathcal P}_{k:N}', \hat x_{k:N} \right) \right] = \mathbb E_{\mathbf x_{k}} \left[ J_k^\ast \left(\mathbf x_k, \hat x_{k:N} \right) \right]
\end{align*}
We omit the detail for brevity.

\begin{cor} \label{cor_best_response_mapping_P}
  Given estimates $\hat x_{k:N}$, for each $j$ in $\{k, \cdots, N\}$, let us define sets $\overline{\mathbb D}_j$ and $\underline{\mathbb D}_j$ as follows:
  \begin{subequations} \label{eq_best_decision_sets}
    \begin{align} 
      \overline{\mathbb D}_j &= \left\{ x_j \in \mathbb R^2 \times [0, 2\pi) \,\Big|\, d^2 \left( x_j, \hat x_j \right) + \mathbb E_{\mathbf x_{j+1}} \left[ J_{j+1}^\ast \left( \mathbf x_{j+1}, \hat x_{j+1:N} \right) \,\Big|\, \mathbf x_j = x_j \right] \leq c_j' \right\} \label{eq_best_decision_sets_upper_limit} \\
      \underline{\mathbb D}_j &= \left\{ x_j \in \mathbb R^2 \times [0, 2\pi) \,\Big|\, d^2 \left( x_j, \hat x_j \right) + \mathbb E_{\mathbf x_{j+1}} \left[ J_{j+1}^\ast \left( \mathbf x_{j+1}, \hat x_{j+1:N} \right) \,\Big|\, \mathbf x_j = x_j \right] < c_j' \right\} \label{eq_best_decision_sets_lower_limit}
    \end{align}
  \end{subequations}
  Consider (deterministic) policies $\mathcal P_{k:N}$ defined by
  \begin{align} \label{eq_deterministic_policy}
    \mathcal P_j(x_j) = \begin{cases} 0 & \text{if } x_j \in \mathbb D_j \\
      1 & \text{otherwise} \end{cases}
  \end{align}
  for each $j$ in $\{k, \cdots, N\}$, where $\mathbb D_j$ is a measurable set satisfying $\underline{\mathbb D}_j \subseteq \mathbb D_j \subseteq \overline{\mathbb D}_j$. The policies $\mathcal P_{k:N}$ belong to $\boldsymbol{\mathfrak P} \left( \hat x_{k:N} \right)$.
\end{cor}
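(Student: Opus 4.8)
The plan is to establish the stronger, \emph{pointwise} identity
\begin{align*}
  J_j\left( x_j, \mathcal P_{j:N}, \hat x_{j:N} \right) = J_j^\ast\left( x_j, \hat x_{j:N} \right)
\end{align*}
for every $x_j$ in $\mathbb R^2 \times [0,2\pi)$ and every $j$ in $\{k, \cdots, N\}$, and then to conclude directly from Definition \ref{def_best_response_mapping_P} using the identity $\min_{\boldsymbol{\mathcal P}_{k:N}'} \mathbb E_{\mathbf x_k}\left[ J_k \right] = \mathbb E_{\mathbf x_k}\left[ J_k^\ast \right]$ recorded after Proposition \ref{prop_best_response_mapping_P}. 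Proving this pointwise identity, rather than verifying \eqref{eq_prop_best_response_mapping_P_01} directly, lets me sidestep any hypothesis on non-degeneracy of $\mathcal P_{k:N}$: all I will need is that the policy attains the minimal value $\mathbb E_{\mathbf x_k}\left[ J_k^\ast \right]$ of \eqref{eq_cost_functional_two-player_stopping_02}.

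I would argue by backward induction on $j$, starting from the convention $J_{N+1} = J_{N+1}^\ast = 0$. Fix $j$ and assume $J_{j+1}(\cdot) = J_{j+1}^\ast(\cdot)$ pointwise. Then the conditional expectations $\mathbb E_{\mathbf x_{j+1}}\left[ J_{j+1} \,\big|\, \mathbf x_j = x_j \right]$ and $\mathbb E_{\mathbf x_{j+1}}\left[ J_{j+1}^\ast \,\big|\, \mathbf x_j = x_j \right]$ coincide, so the ``continue cost'' $g_j(x_j) = d^2\left( x_j, \hat x_j \right) + \mathbb E_{\mathbf x_{j+1}}\left[ J_{j+1}^\ast\left( \mathbf x_{j+1}, \hat x_{j+1:N} \right) \,\big|\, \mathbf x_j = x_j \right]$ appearing in \eqref{eq_cost-to-go_02} and in the definitions \eqref{eq_best_decision_sets} is exactly the value incurred by a non-transmitting decision under $\mathcal P_{j:N}$. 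Since $\mathcal P_j$ is deterministic, the recursive definition of $J_j$ gives $J_j(x_j) = g_j(x_j)$ when $x_j \in \mathbb D_j$ (so $\mathcal P_j(x_j)=0$) and $J_j(x_j) = c_j'$ when $x_j \notin \mathbb D_j$ (so $\mathcal P_j(x_j)=1$).

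The heart of the argument is then a case analysis on the position of $x_j$ relative to the sandwich $\underline{\mathbb D}_j \subseteq \mathbb D_j \subseteq \overline{\mathbb D}_j$, showing $J_j(x_j) = \min\left\{ g_j(x_j), c_j' \right\} = J_j^\ast(x_j)$ irrespective of how $\mathbb D_j$ is selected within this range. If $x_j \in \underline{\mathbb D}_j$ then $g_j(x_j) < c_j'$ and $x_j \in \mathbb D_j$, so $J_j(x_j) = g_j(x_j) = \min\left\{ g_j(x_j), c_j' \right\}$; if $x_j \notin \overline{\mathbb D}_j$ then $g_j(x_j) > c_j'$ and $x_j \notin \mathbb D_j$, so $J_j(x_j) = c_j' = \min\left\{ g_j(x_j), c_j' \right\}$. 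The only delicate case is the boundary layer $\overline{\mathbb D}_j \setminus \underline{\mathbb D}_j$, where $g_j(x_j) = c_j'$ and membership of $x_j$ in $\mathbb D_j$ is unconstrained; here both branches agree, so either decision yields $J_j(x_j) = c_j' = \min\left\{ g_j(x_j), c_j' \right\}$. I expect this boundary layer to be the main (though mild) obstacle, since it is precisely what makes the claim robust to the freedom in choosing $\mathbb D_j$. This closes the induction.

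Finally, evaluating the identity at $j=k$ and taking the expectation over $\mathbf x_k$ yields $\mathbb E_{\mathbf x_k}\left[ J_k\left( \mathbf x_k, \mathcal P_{k:N}, \hat x_{k:N} \right) \right] = \mathbb E_{\mathbf x_k}\left[ J_k^\ast\left( \mathbf x_k, \hat x_{k:N} \right) \right] = \min_{\boldsymbol{\mathcal P}_{k:N}'} \mathbb E_{\mathbf x_k}\left[ J_k\left( \mathbf x_k, \boldsymbol{\mathcal P}_{k:N}', \hat x_{k:N} \right) \right]$, so $\mathcal P_{k:N} \in \boldsymbol{\mathfrak P}\left( \hat x_{k:N} \right)$ by Definition \ref{def_best_response_mapping_P}. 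A routine point to confirm en route is that $J_j^\ast$ and $g_j$ are measurable, so that the sets in \eqref{eq_best_decision_sets} and the expectations are well defined; this follows from Assumption \ref{assump_transition_probability} and a downward induction, while $\mathbb D_j$ is measurable by hypothesis.
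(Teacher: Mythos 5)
Your proof is correct and is essentially the argument the paper intends (the corollary is stated without proof, as an immediate consequence of the Bellman recursion \eqref{eq_cost-to-go_02}): the backward induction showing $J_j\left( x_j, \mathcal P_{j:N}, \hat x_{j:N}\right) = J_j^\ast\left( x_j, \hat x_{j:N}\right)$ pointwise, with the three-way case analysis on $\underline{\mathbb D}_j \subseteq \mathbb D_j \subseteq \overline{\mathbb D}_j$, is exactly the right verification, and combining it with the recorded identity $\min_{\boldsymbol{\mathcal P}_{k:N}'} \mathbb E_{\mathbf x_k}\left[ J_k \right] = \mathbb E_{\mathbf x_k}\left[ J_k^\ast \right]$ closes the claim via Definition \ref{def_best_response_mapping_P}. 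Your remark that this pointwise route sidesteps the non-degeneracy hypothesis of Proposition \ref{prop_best_response_mapping_P} is a legitimate and worthwhile point of care, since the corollary does not assume the resulting deterministic policies are non-degenerate.
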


\begin{prop} \label{prop_best_response_mapping_X}
  Consider that non-degenerate policies $\boldsymbol{\mathcal P}_{k:N}$ and estimates $\hat x_{k:N}$ are given. The estimates $\hat x_{k:N}$ belong to $\mathfrak X \left( \boldsymbol{\mathcal P}_{k:N} \right)$ if and only if the following holds for all $j$ in $\{k, \cdots, N\}$:
  \begin{align*}
    &\mathbb{E}_{\mathbf{x}_j} \left[ d^2 \left( \mathbf x_j, \hat x_j \right) \,\Big|\, \mathbf R_k=0, \cdots, \mathbf R_j=0 \right] \nonumber \\
    	&= \min_{\hat x_j' \in \mathbb R^2 \times [0, 2\pi)  }\mathbb{E}_{\mathbf{x}_j} \left[ d^2 \left( \mathbf x_j, \hat x_j' \right) \,\Big|\, \mathbf R_k=0, \cdots, \mathbf R_j=0 \right]
  \end{align*}
\end{prop}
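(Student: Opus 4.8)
The plan is to show that, once the policies $\boldsymbol{\mathcal P}_{k:N}$ are held fixed, the estimator's objective $\mathbb E_{\mathbf x_k}\!\left[ J_k \left(\mathbf x_k, \boldsymbol{\mathcal P}_{k:N}, \hat x_{k:N} \right)\right]$ separates additively across the stages $j$ in $\{k, \cdots, N\}$, with each estimate $\hat x_j$ entering the cost only through the single conditional distortion $\mathbb E_{\mathbf x_j}\!\left[ d^2\left(\mathbf x_j, \hat x_j\right) \mid \mathbf R_k=0, \cdots, \mathbf R_j=0 \right]$. The crucial structural fact is that the transmission decisions $\mathbf R_j = \boldsymbol{\mathcal P}_j(\mathbf x_j)$ do not depend on the estimates; hence the conditioning events $\{\mathbf R_k=0, \cdots, \mathbf R_j=0\}$ and their probabilities are fixed throughout the estimator's minimization. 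Once the additive form is in hand, minimizing over $\hat x_{k:N}$ reduces to minimizing each distortion term in isolation, which is exactly the asserted characterization of $\mathfrak X \left( \boldsymbol{\mathcal P}_{k:N} \right)$.

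To make this precise, I would introduce the abbreviations $V_j = \mathbb E_{\mathbf x_j}\!\left[ J_j \left(\mathbf x_j, \boldsymbol{\mathcal P}_{j:N}, \hat x_{j:N}\right) \mid \mathbf R_k=0, \cdots, \mathbf R_{j-1}=0 \right]$, $A_j = \mathbb E_{\mathbf x_j}\!\left[ d^2\left(\mathbf x_j, \hat x_j\right) \mid \mathbf R_k=0, \cdots, \mathbf R_j=0 \right]$, and $p_j = \mathbb P\!\left(\mathbf R_j=0 \mid \mathbf R_k=0, \cdots, \mathbf R_{j-1}=0\right)$, so that the cost of interest equals $V_k$. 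With these, relation \eqref{eq_cost-to-go_01} reads
\begin{align*}
  V_j = \big( A_j + V_{j+1} \big)\, p_j + c_j' \, \big(1 - p_j\big),
\end{align*}
with the convention $V_{N+1}=0$. Unrolling this recursion from $j=k$ and applying the chain rule $\prod_{l=k}^{j} p_l = \mathbb P\!\left(\mathbf R_k=0, \cdots, \mathbf R_j=0\right)$ yields the additive decomposition
\begin{align*}
  V_k = \sum_{j=k}^{N} \mathbb P\!\big(\mathbf R_k=0, \cdots, \mathbf R_j=0\big)\, A_j + C,
\end{align*}
where the remainder $C$ collects the stopping-cost contributions $c_j'(1-p_j)$ and is manifestly independent of $\hat x_{k:N}$.

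The non-degeneracy hypothesis guarantees $p_l > 0$ for every $l$ in $\{k, \cdots, N\}$, whence each weight $\mathbb P\!\left(\mathbf R_k=0, \cdots, \mathbf R_j=0\right) = \prod_{l=k}^{j} p_l$ is strictly positive; this positivity also ensures that each conditional expectation $A_j$ is well-defined. Since $\hat x_j$ appears in $V_k$ solely inside the $j$-th strictly-positively-weighted summand, the minimization over $\hat x_{k:N}$ decouples stage by stage: $\hat x_{k:N}$ attains $\min_{\hat x_{k:N}'} V_k$ if and only if, for every $j$, the estimate $\hat x_j$ attains $\min_{\hat x_j'} A_j$, where the common positive weight may be dropped from the optimality condition. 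Invoking Definition \ref{def_best_response_mapping_X} then gives precisely the claimed equivalence for both directions.

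The main obstacle is the decoupling step, and in particular justifying that, with the policies fixed, \emph{every} quantity appearing in $V_k$ apart from the conditional distortions $A_j$ — the conditioning events $\{\mathbf R_k=0,\cdots,\mathbf R_j=0\}$ and their probabilities — is independent of the estimates, so that no $\hat x_j$ can influence the cost through any term other than $A_j$. It is this independence, together with the strict positivity of the weights supplied by non-degeneracy, that makes stage-wise optimality equivalent to global optimality; without non-degeneracy a vanishing weight would leave the corresponding $\hat x_j$ unconstrained, exactly the situation already isolated in Remark \ref{remark_degeneracy}.
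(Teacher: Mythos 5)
Your proof is correct and follows exactly the route the paper indicates: unrolling the recursion \eqref{eq_cost-to-go_01} into an additive decomposition in which each $\hat x_j$ enters only through its own conditional distortion weighted by $\mathbb P\left(\mathbf R_k=0,\cdots,\mathbf R_j=0\right)$, with non-degeneracy supplying the strict positivity of the weights needed for the stage-wise decoupling. The paper omits these details for brevity, and your write-up supplies precisely the argument it alludes to.
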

The proof follows from \eqref{eq_cost-to-go_01} and Definition \ref{def_best_response_mapping_X}. We omit the detail for brevity.

\begin{cor} \label{cor_best_response_mapping_X}
  Given non-degenerate policies $\boldsymbol{\mathcal P}_{k:N}$, for each $j$ in $\{k, \cdots, N\}$, let us consider an estimate ${\hat x_j = \begin{pmatrix} \hat p_{1,j} & \hat p_{2,j} & \hat \theta_j \end{pmatrix}^T}$ determined as follows:
  \begin{subequations} \label{eq_optimal_estimate}
    \begin{align} 
      \hat p_{1,j} &= \mathbb E \left[ \mathbf p_{1,j} \,\big|\, \mathbf R_k=0, \cdots, \mathbf R_j=0 \right] \\
      \hat p_{2,j} &= \mathbb E \left[ \mathbf p_{2,j} \,\big|\, \mathbf R_k=0, \cdots, \mathbf R_j=0 \right]
    \end{align}
  \end{subequations}
  and $\hat \theta_j$ takes a value in $[0, 2\pi)$ that satisfies
  \begin{subequations} \label{eq_optimal_estimate_02}
    \begin{align}  
      \sin \hat \theta_j &= \alpha^{-1} \cdot \mathbb E \left[ \sin \boldsymbol \theta_j \,\big|\, \mathbf R_k=0, \cdots, \mathbf R_j=0 \right] \\
      \cos \hat \theta_j &= \alpha^{-1} \cdot \mathbb E \left[ \cos \boldsymbol \theta_j \,\big|\, \mathbf R_k=0, \cdots, \mathbf R_j=0 \right]
    \end{align}
  \end{subequations}
  provided 
  \begin{align*}
    \alpha &= \mathbb E^2 \left[ \sin \boldsymbol \theta_j \,\big|\, \mathbf R_k=0, \cdots, \mathbf R_j=0 \right] + \mathbb E^2 \left[ \cos \boldsymbol \theta_j \,\big|\, \mathbf R_k=0, \cdots, \mathbf R_j=0 \right]
  \end{align*}
  \normalsize
  is non-zero; otherwise $\hat \theta_j$ takes any value in $[0, 2\pi)$.
  The estimates $\hat x_{k:N}$ belong to $\mathfrak X \left( \boldsymbol{\mathcal P}_{k:N} \right)$.
\end{cor}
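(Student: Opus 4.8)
The plan is to invoke Proposition \ref{prop_best_response_mapping_X}, which reduces the claim $\hat x_{k:N} \in \mathfrak X \left( \boldsymbol{\mathcal P}_{k:N} \right)$ to a family of pointwise minimizations: for each $j$ in $\{k, \cdots, N\}$ the estimate $\hat x_j$ must minimize $\mathbb E_{\mathbf x_j} \left[ d^2 \left( \mathbf x_j, \hat x_j' \right) \,\big|\, \mathbf R_k=0, \cdots, \mathbf R_j=0 \right]$ over $\hat x_j'$ in $\mathbb R^2 \times [0, 2\pi)$. Since the policies are non-degenerate, the conditioning event has positive probability and the conditional expectation is well-defined. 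It therefore suffices to verify that the $\hat x_j$ prescribed in \eqref{eq_optimal_estimate}--\eqref{eq_optimal_estimate_02} attains this minimum for every $j$.

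The first step is to expand $d^2$ using the Frobenius-norm definition of the metric $d$. Forming the difference of the two $3 \times 3$ matrices and summing the squares of its entries, the two identical rotation blocks contribute $2 \left( \cos \theta_j - \cos \hat\theta_j \right)^2 + 2 \left( \sin \theta_j - \sin \hat\theta_j \right)^2$, which collapses via the angle-difference identity to $4 \left( 1 - \cos \left( \theta_j - \hat\theta_j \right) \right)$, while the last column contributes the translational part. This yields the decoupled expression
\begin{align*}
  d^2 \left( x_j, \hat x_j \right) = \left( p_{1,j} - \hat p_{1,j} \right)^2 + \left( p_{2,j} - \hat p_{2,j} \right)^2 + 4 \left( 1 - \cos \left( \theta_j - \hat\theta_j \right) \right).
\end{align*}
The key structural observation is that the translational coordinates $\hat p_{1,j}, \hat p_{2,j}$ and the orientation coordinate $\hat \theta_j$ enter $d^2$ in three separate additive terms, so the single-stage problem splits into three independent scalar minimizations.

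For the two translational terms, completing the square shows that $\mathbb E \left[ \left( \mathbf p_{i,j} - \hat p_{i,j}' \right)^2 \,\big|\, \cdots \right]$ is minimized by the conditional mean, giving exactly \eqref{eq_optimal_estimate}. For the orientation term, minimizing $4 \, \mathbb E \left[ 1 - \cos \left( \boldsymbol \theta_j - \hat\theta_j' \right) \,\big|\, \cdots \right]$ is equivalent to maximizing $\mathbb E \left[ \cos \left( \boldsymbol \theta_j - \hat\theta_j' \right) \,\big|\, \cdots \right]$; expanding $\cos \left( \boldsymbol \theta_j - \hat\theta_j' \right) = \cos \boldsymbol \theta_j \cos \hat\theta_j' + \sin \boldsymbol \theta_j \sin \hat\theta_j'$ and pushing the expectation inside reduces this to maximizing the linear functional $\cos \hat\theta_j' \cdot \mathbb E \left[ \cos \boldsymbol \theta_j \,\big|\, \cdots \right] + \sin \hat\theta_j' \cdot \mathbb E \left[ \sin \boldsymbol \theta_j \,\big|\, \cdots \right]$ of the unit vector $\left( \cos \hat\theta_j', \sin \hat\theta_j' \right)$. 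By the Cauchy--Schwarz inequality this functional is maximized when $\left( \cos \hat\theta_j', \sin \hat\theta_j' \right)$ is aligned with the mean-direction vector $\left( \mathbb E \left[ \cos \boldsymbol \theta_j \,\big|\, \cdots \right], \mathbb E \left[ \sin \boldsymbol \theta_j \,\big|\, \cdots \right] \right)$, which is precisely the normalization encoded by the constant $\alpha$ in \eqref{eq_optimal_estimate_02}; when $\alpha = 0$ the functional vanishes identically and every $\hat\theta_j'$ is optimal, as the corollary asserts.

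I expect the only delicate point to be the orientation minimization, where the feasible set is the circle $[0, 2\pi)$ rather than a linear space. The reduction to a Cauchy--Schwarz argument over the unit vector $\left( \cos \hat\theta_j', \sin \hat\theta_j' \right)$ is what makes the prescribed circular-mean direction optimal and simultaneously accounts for the degenerate case $\alpha = 0$. The remaining steps---the Frobenius-norm expansion and the conditional-mean characterization of the translational estimates---are routine. Collecting the three minimizers establishes that $\hat x_j$ attains the minimum in Proposition \ref{prop_best_response_mapping_X} for every $j$, whence $\hat x_{k:N} \in \mathfrak X \left( \boldsymbol{\mathcal P}_{k:N} \right)$.
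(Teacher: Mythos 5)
Your proof is correct and is exactly the verification the paper intends but omits: it reduces the claim to the per-stage minimization of Proposition \ref{prop_best_response_mapping_X}, expands the Frobenius metric into the decoupled form $d^2(x_j,\hat x_j)=(p_{1,j}-\hat p_{1,j})^2+(p_{2,j}-\hat p_{2,j})^2+4\left(1-\cos(\theta_j-\hat\theta_j)\right)$, and solves the three independent scalar problems (conditional means for the translation, alignment with the circular-mean direction for the orientation, with the $\alpha=0$ case handled). The only cosmetic point worth flagging is that the paper's $\alpha$ is the \emph{squared} norm of $\left(\mathbb E[\cos\boldsymbol\theta_j\,|\,\cdots],\,\mathbb E[\sin\boldsymbol\theta_j\,|\,\cdots]\right)$, so \eqref{eq_optimal_estimate_02} should be read as specifying only the direction of $(\cos\hat\theta_j,\sin\hat\theta_j)$, which is how you correctly interpret it.
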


\begin{prop} \label{prop_mapping_G_j_continuity}
  Consider functions $\left\{ \mathcal G_j \right\}_{j=k}^N$ defined as follows:\footnote{Note that $\mathcal G_j$ is a function defined on $\left( \mathbb R^2 \times [0, 2\pi) \right)^{N-j+2}$. See Appendix \ref{section_product_metric_space} for some remarks on the continuity of functions on a product space.} For each $j$ in $\{k, \cdots, N\}$,
  \begin{align} \label{eq_mapping_G_j}
    \mathcal G_j \left( x_{j-1}, \hat x_{j:N}\right) \overset{def}{=} \mathbb E_{\mathbf x_j} \left[ J_j^\ast \left( \mathbf x_j, \hat x_{j:N} \right) \,\Big|\, \mathbf x_{j-1} = x_{j-1} \right]
  \end{align}
  where $J_j^\ast$ is given in \eqref{eq_cost-to-go_02}. The functions $\left\{ \mathcal G_j \right\}_{j=k}^N$ are all continuous.
\end{prop}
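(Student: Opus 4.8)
The plan is to prove, by backward induction on $j$ from $N$ down to $k$, the stronger statement that each cost-to-go $J_j^\ast$ defined in \eqref{eq_cost-to-go_02} is bounded and jointly continuous in its arguments $(x_j, \hat x_{j:N})$, and then to read off the continuity of each $\mathcal G_j$ as the expectation of such a function. Boundedness is immediate and propagates for free: since $J_{N+1}^\ast = 0$ and each $J_j^\ast$ is a minimum with $c_j'$, one has $0 \le J_j^\ast \le c_j'$ for every $j$. This is precisely what lets me integrate $J_j^\ast$ safely even though the running cost $d^2$ is itself unbounded, since $d^2$ only ever enters inside the $\min$ and is never integrated on its own.

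The engine of the induction is a Feller-type continuity lemma that I would isolate first: if $f: \left(\mathbb R^2 \times [0,2\pi)\right) \times Y \to \mathbb R$ is bounded and jointly continuous, where $Y$ is the parameter space of the estimates, then $(x, y) \mapsto \mathbb E_{\mathbf x_j}\!\left[ f(\mathbf x_j, y) \mid \mathbf x_{j-1} = x\right]$ is jointly continuous. Granting this lemma, the inductive step is routine: assuming $J_{j+1}^\ast$ is bounded and continuous, applying the lemma with $f = J_{j+1}^\ast$ shows that $\mathcal G_{j+1}(x_j, \hat x_{j+1:N}) = \mathbb E_{\mathbf x_{j+1}}\!\left[J_{j+1}^\ast(\mathbf x_{j+1}, \hat x_{j+1:N}) \mid \mathbf x_j = x_j\right]$ is continuous; then $J_j^\ast = \min\{ d^2(x_j, \hat x_j) + \mathcal G_{j+1}, c_j'\}$ is continuous as the pointwise minimum of a constant and a sum of continuous functions ($d^2$ is continuous because its matrix entries $\cos\theta, \sin\theta, p_1, p_2$ are, reading the angular coordinate on the circle). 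The base case $J_N^\ast = \min\{d^2(x_N, \hat x_N), c_N'\}$ is handled the same way, as $\mathcal G_{N+1} \equiv 0$. The proposition then follows by applying the lemma once more, with $f = J_j^\ast$, to the definition \eqref{eq_mapping_G_j} of $\mathcal G_j$.

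The hard part is the lemma, and specifically its \emph{joint} (not merely separate) continuity. Assumption \ref{assump_transition_probability}(1) says exactly that, along any sequence $x_n \to x$, the transition measures $P_n := \mathbb P(\mathbf x_j \in \cdot \mid \mathbf x_{j-1} = x_n)$ converge setwise to $P := \mathbb P(\mathbf x_j \in \cdot \mid \mathbf x_{j-1} = x)$. From setwise convergence I get convergence of integrals of any fixed bounded measurable integrand by approximating it uniformly with simple functions, which disposes of the term $\int f(\cdot, y)\, d(P_n - P)$. The genuinely delicate term is $\int |f(\cdot, y_n) - f(\cdot, y)|\, dP_n$ with $y_n \to y$, where both integrand and measure move. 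Here I would use that setwise convergence implies weak convergence, so the sequence $\{P_n\}$ is weakly convergent and hence tight: given $\varepsilon > 0$, fix a compact $K$ with $\sup_n P_n(K^c) < \varepsilon$, use the uniform continuity of $f$ on $K$ times a compact neighborhood of $y$ to make $\sup_{s \in K}|f(s, y_n) - f(s, y)|$ small, and bound the contribution off $K$ by $2\|f\|_\infty\, \varepsilon$. Taking $\limsup_n$ and then $\varepsilon \downarrow 0$ kills this term. I expect the tightness step — extracting ``no escape of mass to infinity'' from the purely setwise hypothesis via Prokhorov's theorem — to be the crux; it is what forces me to regard $\mathbb R^2 \times [0,2\pi)$ as a Polish space by identifying the angular coordinate with the circle $\mathbb R / 2\pi\mathbb Z$, consistent with the periodic way $\theta$ enters both $d$ and the SPP dynamics.
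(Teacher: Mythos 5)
Your proof is correct, and its skeleton coincides with the paper's: both arguments run a backward induction from $j=N+1$, exploit the uniform bound $0 \le J_j^\ast \le c_j'$ so that the unbounded running cost $d^2$ only ever appears inside the $\min$, and reduce everything to showing that the map $(x,\hat x_{j:N}) \mapsto \mathbb E_{\mathbf x_j}\left[ J_j^\ast(\mathbf x_j, \hat x_{j:N}) \mid \mathbf x_{j-1}=x\right]$ is jointly continuous when $J_j^\ast$ is bounded and continuous. Where you diverge is in the mechanism for passing the limit through the integral when the measure and the integrand parameter move simultaneously. The paper first upgrades Assumption \ref{assump_transition_probability}(1) to weak convergence of the transition measures via the portmanteau theorem, then invokes the Skorokhod representation theorem to realize the converging measures as almost surely converging random variables on a common probability space, and finishes with the bounded convergence theorem; the a.s.\ convergence plus joint continuity of $g_j$ handles both moving pieces at once. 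You instead read the assumption as setwise convergence of the kernels (which it does give, since continuity is required for \emph{every} Borel set), split the error into a fixed-integrand/moving-measure term and a moving-integrand term, and control the latter by extracting uniform tightness from the weakly convergent sequence and using uniform continuity on a compact set. Both routes are sound; yours avoids the Skorokhod machinery at the cost of an explicit $\varepsilon$-management argument and an appeal to Prokhorov-type tightness, and it makes visible that the only place the topology of $\mathbb R^2 \times [0,2\pi)$ as a Polish space (angle identified with the circle) genuinely matters is the tightness step --- a point the paper relegates to its Lemma \ref{lemma_metric_space}. One small remark: for your first term, weak convergence already suffices since $f(\cdot,y)$ is bounded and continuous, so the simple-function approximation under setwise convergence is more than you need there.
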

The proof is given in Appendix \ref{proof_prop_mapping_G_j_continuity}. 

\subsection{Existence of a Jointly Optimal Solution} \label{section_existence_global_minimizer}
\begin{prop} \label{prop_optimal_solution_non-degeneracy}
  Let policies $\boldsymbol{\mathcal P}_{k:N}^\ast$ and estimates $\hat x_{k:N}^\ast$ are jointly optimal for \eqref{eq_cost_functional_two-player_stopping_02}. The policies $\boldsymbol{\mathcal P}_{k:N}^\ast$  are not degenerate in the sense of Definition \ref{def_degeneracy}.
\end{prop}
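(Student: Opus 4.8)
The plan is to argue by contradiction: assume $\boldsymbol{\mathcal P}_{k:N}^\ast$ and $\hat x_{k:N}^\ast$ are jointly optimal yet $\boldsymbol{\mathcal P}_{k:N}^\ast$ is degenerate, and then construct a feasible policy/estimate pair attaining a strictly smaller value of \eqref{eq_cost_functional_two-player_stopping_02}, contradicting Definition \ref{def_joint_optimal_sub}. Let $j_0$ be the \emph{smallest} index for which \eqref{eq_def_degeneracy_01} holds. Minimality of $j_0$ guarantees that the event $\{\mathbf R_k=0, \cdots, \mathbf R_{j_0-1}=0\}$ has positive probability, so the conditional law of $\mathbf x_{j_0}$ on this event is well-defined; and by Remark \ref{remark_degeneracy} the conditional cost-to-go at time $j_0$ equals the stopping cost $c_{j_0}'$.

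The heart of the construction is to exhibit a non-empty open set $\mathbb O$ on which refraining from transmitting at $j_0$, followed by a suitably chosen continuation, is strictly cheaper than $c_{j_0}'$. To that end I would fix an arbitrary point $x^\circ$, set the time-$j_0$ estimate (used when $\mathbf R_{j_0}=0$) equal to $x^\circ$, and choose continuation estimates $\hat x_{j_0+1:N}'$ so that $\mathcal G_{j_0+1}(x^\circ, \hat x_{j_0+1:N}') < c_{j_0}'$. This is possible because the infimum over estimates of $\mathcal G_{j_0+1}(x^\circ, \cdot)$ coincides with the optimal continuation value from state $x^\circ$, which by \eqref{eq_stopping_cost} together with the initial-condition independence established in Remark \ref{remark_solution_transformation} equals $c_{j_0}' - c_{j_0}$; since the per-transmission cost $c_{j_0}$ is strictly positive, this infimum is strictly below $c_{j_0}'$. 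Consequently the function $h(x) = d^2(x, x^\circ) + \mathcal G_{j_0+1}(x, \hat x_{j_0+1:N}')$ satisfies $h(x^\circ) < c_{j_0}'$, and because $d^2(\cdot, x^\circ)$ is continuous and $\mathcal G_{j_0+1}(\cdot, \hat x_{j_0+1:N}')$ is continuous by Proposition \ref{prop_mapping_G_j_continuity}, the set $\mathbb O = \{x : h(x) < c_{j_0}'\}$ is open and contains $x^\circ$, hence non-empty.

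With $\mathbb O$ in hand I would modify the optimal solution only from time $j_0$ onward, leaving all policies and estimates for earlier times untouched: at time $j_0$ transmit exactly as before outside $\mathbb O$ but refrain from transmitting on $\mathbb O$ (with estimate $x^\circ$), and use $\hat x_{j_0+1:N}'$ with its best-response policy thereafter. Since the behavior before $j_0$ is unchanged, the probability of reaching $j_0$ and all costs incurred up to $j_0$ are identical, so only the conditional cost-to-go at $j_0$ changes. On $\mathbb O^c$ this cost-to-go is still $c_{j_0}'$, while on $\mathbb O$ it equals $h(\mathbf x_{j_0}) < c_{j_0}'$. By Assumption \ref{assump_transition_probability}(2) the open set $\mathbb O$ has positive probability under the conditional law of $\mathbf x_{j_0}$ given $\{\mathbf R_k=0, \cdots, \mathbf R_{j_0-1}=0\}$, so the modified conditional cost-to-go is strictly below $c_{j_0}'$. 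This strictly lowers \eqref{eq_cost_functional_two-player_stopping_02}, contradicting joint optimality. The boundary case $j_0 = N$ is handled identically with $\mathcal G_{N+1} \equiv 0$, where $h(x^\circ)=0 < c_N' = c_N$.

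The step I expect to be the main obstacle is the identification $\inf_{\hat x_{j_0+1:N}} \mathcal G_{j_0+1}(x^\circ, \hat x_{j_0+1:N}) = c_{j_0}' - c_{j_0}$. This demands a careful reconciliation of the continuation value \emph{inside} the current sub-problem, where the estimates are free variables measured relative to the reference initial condition, with the optimal sub-problem value defined through \eqref{eq_stopping_cost}, together with an appeal to Remark \ref{remark_solution_transformation} to assert that the latter is independent of the initial condition. Once this is settled, the remaining ingredients --- continuity from Proposition \ref{prop_mapping_G_j_continuity} and the open-set positivity from Assumption \ref{assump_transition_probability}(2) --- are routine.
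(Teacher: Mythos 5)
Your proposal is correct and follows essentially the same route as the paper's own proof: take the smallest degenerate index $j_0$, use the sub-problem value in \eqref{eq_stopping_cost} together with the initial-condition invariance of Remark \ref{remark_solution_transformation} to produce continuation estimates making the no-transmit region non-empty (this is exactly the paper's Lemma \ref{lemma_non-degenerate_policy}, the step you correctly flag as the crux), then invoke continuity (Proposition \ref{prop_mapping_G_j_continuity}) and Assumption \ref{assump_transition_probability} to give that open region positive conditional probability and obtain a strict improvement, contradicting joint optimality. The only cosmetic difference is that you state the identification of the continuation infimum as an equality with $c_{j_0}'-c_{j_0}$ where only the inequality $\le c_{j_0}'-c_{j_0}<c_{j_0}'$ is needed (and is what the paper actually uses); this does not affect the argument.
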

The proof is given in Appendix \ref{proof_section_2}.

\begin{thm} \label{thm_existence_minimizer}
  There exist policies $\boldsymbol{\mathcal P}_{k:N}^\ast$ and estimates $\hat x_{k:N}^\ast$ that are jointly optimal for \eqref{eq_cost_functional_two-player_stopping_02}.
\end{thm}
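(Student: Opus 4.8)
The plan is to reduce the joint minimization over the pair $(\boldsymbol{\mathcal P}_{k:N},\hat x_{k:N})$ to a minimization over the finite‑dimensional estimate vector $\hat x_{k:N}$ alone, and then to invoke a Weierstrass‑type argument. First I would carry out the inner minimization over policies: by Proposition~\ref{prop_best_response_mapping_P} together with the displayed identity following it, for every fixed $\hat x_{k:N}$ one has $\min_{\boldsymbol{\mathcal P}_{k:N}} \mathbb E_{\mathbf x_k}[J_k(\mathbf x_k, \boldsymbol{\mathcal P}_{k:N}, \hat x_{k:N})] = \mathbb E_{\mathbf x_k}[J_k^\ast(\mathbf x_k, \hat x_{k:N})] = \mathcal G_k(0,\hat x_{k:N})$, where the last equality is \eqref{eq_mapping_G_j} with $j=k$ and the sub‑problem initial condition $\mathbf x_{k-1}=0$. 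Hence a jointly optimal solution exists if and only if $\hat x_{k:N}\mapsto\mathcal G_k(0,\hat x_{k:N})$ attains its infimum: given a minimizing $\hat x_{k:N}^\ast$, the threshold policy of Corollary~\ref{cor_best_response_mapping_P} supplies a matching $\boldsymbol{\mathcal P}_{k:N}^\ast\in\boldsymbol{\mathfrak P}(\hat x_{k:N}^\ast)$, and the resulting pair is jointly optimal in the sense of Definition~\ref{def_joint_optimal_sub}.

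It remains to show that $\mathcal G_k(0,\cdot)$ attains its minimum. Continuity is already available from Proposition~\ref{prop_mapping_G_j_continuity}, so the only difficulty is the lack of compactness of the domain $(\mathbb R^2\times[0,2\pi))^{N-k+1}$. The angular coordinates cause no trouble: $d$ depends on each $\hat\theta_l$ only through $\cos\hat\theta_l,\sin\hat\theta_l$, so $\mathcal G_k(0,\cdot)$ is $2\pi$‑periodic in each $\hat\theta_l$ and minimization over $\hat\theta_l\in[0,2\pi)$ is the same as over the compact circle $S^1$. The genuine obstacle is the position coordinates in the non‑compact $\mathbb R^2$ factors, and—crucially—$\mathcal G_k(0,\cdot)$ is \emph{not} coercive: since $0\le J_j^\ast\le c_j'$ pointwise by \eqref{eq_cost-to-go_02}, the objective stays bounded by $c_k'$ even as estimate positions diverge, so one cannot argue that the cost blows up at infinity.

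To overcome this I would compactify each position factor with a point at infinity and exploit a monotonicity of the recursion \eqref{eq_cost-to-go_02}. As $\|\hat p_l\|\to\infty$, the term $d^2(x_l,\hat x_l)$ diverges while $\mathbb E_{\mathbf x_{l+1}}[J_{l+1}^\ast]\le c_{l+1}'$ remains bounded, so $J_l^\ast(x_l,\hat x_{l:N})\to c_l'$ for each $x_l$; because $0\le J_l^\ast\le c_l'$, dominated convergence then shows $\mathcal G_k(0,\cdot)$ extends continuously to the compact product $\big((\mathbb R^2\cup\{\infty\})\times S^1\big)^{N-k+1}$, and Weierstrass' theorem yields a minimizer $\hat x_{k:N}^\ast$ there. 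Finally I would remove any infinite coordinate: the map $t\mapsto\min\{d^2+t,c_{j_0}'\}$ is non‑decreasing, so replacing a coordinate $\hat p_{j_0}=\infty$ (which forces $J_{j_0}^\ast\equiv c_{j_0}'$) by any finite value can only decrease $J_{j_0}^\ast$ pointwise; this decrease propagates up the recursion through $J_{j_0-1}^\ast,\dots,J_k^\ast$ by the same monotonicity, while leaving the downstream stages $j>j_0$ untouched. Consequently the objective does not increase, and one obtains a genuine finite minimizer $\hat x_{k:N}^\ast\in(\mathbb R^2\times[0,2\pi))^{N-k+1}$.

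The step I expect to be the main obstacle is making this compactness rigorous—namely justifying the continuous extension of $\mathcal G_k$ to the compactified domain (the uniformity behind the dominated‑convergence argument, which rests squarely on the uniform bound $J_j^\ast\le c_j'$) and the monotone propagation that lets an infinite optimal coordinate be traded for a finite one without loss. Non‑degeneracy (Proposition~\ref{prop_optimal_solution_non-degeneracy}) enters at the end to guarantee that the recovered optimal policy conditions only on positive‑probability events, so that the best‑response characterization and the value identity invoked above are all well defined.
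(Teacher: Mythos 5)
Your proposal is correct in outline, and it reaches the conclusion by a genuinely different route at the one step where the paper does real work. Both arguments reduce the joint problem to minimizing the continuous function $\hat x_{k:N}\mapsto\mathcal G_k(0,\hat x_{k:N})$ (Proposition \ref{prop_mapping_G_j_continuity}) and then recover the policies from Corollary \ref{cor_best_response_mapping_P}; the difference is how each handles the non-compactness of the estimate space. The paper proves Lemma \ref{lemma_optimal_solution_containment}: a constructive contradiction argument showing that any sequence of estimates escaping every compact set can be strictly outperformed by estimates in a fixed compact set, built by locating the first stage $j_0$ whose no-transmission probability degenerates and splicing in the tail estimates of Lemma \ref{lemma_non-degenerate_policy}; this relies on the strict positivity $c_{j_0}'>c_{j_0}'-\epsilon$ and on Assumption \ref{assump_transition_probability}(2) to guarantee the replacement set $\mathbb B$ has positive mass. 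You instead compactify each position factor with a point at infinity, use the uniform bound $0\le J_j^\ast\le c_j'$ from \eqref{eq_cost-to-go_02} to extend $\mathcal G$ continuously (with value forced to the stopping cost at infinity), apply Weierstrass, and then strip any infinite coordinate by the monotonicity of $t\mapsto\min\{d^2+t,c_j'\}$, which weakly decreases the objective stage by stage up the recursion. Your route is more elementary and arguably more general: it does not invoke the positivity of the transition kernel on open sets, nor the strict inequality supplied by $c_j>0$, since a weak decrease suffices to keep a minimizer after removing infinite coordinates. What it costs you is that Proposition \ref{prop_mapping_G_j_continuity} as stated only gives continuity on the original domain, so the Skorokhod/bounded-convergence argument in its proof must be rerun on the compactified product to justify the continuous extension — you correctly flag this as the main outstanding verification, and the same uniform bound $J_j^\ast\le c_j'$ that you identify does carry it through. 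One small correction: non-degeneracy of the resulting optimal policies (Proposition \ref{prop_optimal_solution_non-degeneracy}) is not actually needed to conclude existence — the paper states it separately and uses it downstream for the best-response characterizations and the iterative procedure — so your closing remark overstates its role here.
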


To prove Theorem \ref{thm_existence_minimizer}, we need the following lemma.
\begin{lemma} \label{lemma_optimal_solution_containment}
  Let us define
  \begin{align} \label{eq_mapping_G}
    \mathcal G \left( \hat x_{k:N} \right) &\overset{def}{=} \mathbb E_{\mathbf x_k} \left[ J_k^\ast \left( \mathbf x_k, \hat x_{k:N} \right) \right]
  \end{align}
  with the initial condition $\mathbf x_{k-1}=0$, where $J_k^\ast$ is defined in \eqref{eq_cost-to-go_02}. There exists a compact set ${\mathbb K \subset \left( \mathbb R^2 \times [0, 2\pi) \right)^{N-k+1}}$ for which the following holds for all $\hat x_{k:N}$ in $\left(\mathbb R^2 \times [0, 2\pi) \right)^{N-k+1}$:
  $$\inf_{\hat x_{k:N}' \in \mathbb K} \mathcal G \left( \hat x_{k:N}' \right) \leq \mathcal G \left( \hat x_{k:N} \right)$$
\end{lemma}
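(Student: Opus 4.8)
The plan is to combine the continuity of $\mathcal G$ with a coercivity estimate in the position coordinates, the latter coming entirely from the fact that the optimal-stopping value $J_k^\ast$ is capped by the stopping costs. Observe first that $\mathcal G \left( \hat x_{k:N} \right) = \mathcal G_k \left( 0, \hat x_{k:N} \right)$, so $\mathcal G$ is continuous by Proposition \ref{prop_mapping_G_j_continuity}. Since the angular coordinates range over a compact circle (the metric $d$ identifies $\theta=0$ with $\theta=2\pi$), it suffices to produce a radius $R$ and a set $\mathbb K$ whose position coordinates lie in $\overline B_R$ such that $\inf_{\mathbb K} \mathcal G$ equals $m := \inf \mathcal G$; the stated inequality then follows because $\inf_{\mathbb K}\mathcal G = m \le \mathcal G(\hat x_{k:N})$ for every $\hat x_{k:N}$, and $\mathbb K$ is compact once the angles are viewed on the circle.

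Two structural facts drive the argument. First, a backward induction on \eqref{eq_cost-to-go_02} starting from $J_{N+1}^\ast = 0$ gives $0 \le J_j^\ast \le c_j'$ for every $j$, and shows that each $J_j^\ast$ is nondecreasing in the continuation term $\mathbb E_{\mathbf x_{j+1}}[J_{j+1}^\ast \mid \mathbf x_j]$; consequently, for each fixed slot $j_0$, driving $\| \hat p_{j_0}\| \to \infty$ (so that $d^2(x_{j_0},\hat x_{j_0}) \to \infty$ pointwise and the minimum in \eqref{eq_cost-to-go_02} saturates at $c_{j_0}'$) can only increase $\mathcal G$. Second, reading $J_k^\ast$ as an optimal-stopping value and bounding each sample path, so that any stopping rule either pays the running cost $d^2(\mathbf x_{j_0},\hat x_{j_0})$ incurred at time $j_0$ or stops at or before $j_0$ at a cost of at least $\underline c_{j_0} := \min_{k \le j \le j_0} c_j' > 0$, yields the lower bound
\begin{align*}
  \mathcal G\left( \hat x_{k:N} \right) \ge \mathbb E_{\mathbf x_{j_0}} \left[ \min \left\{ d^2 \left( \mathbf x_{j_0}, \hat x_{j_0} \right), \underline c_{j_0} \right\} \right]
\end{align*}
for every $j_0$ in $\{k,\dots,N\}$, the expectation being taken with $\mathbf x_{k-1}=0$.

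Next I would establish coercivity from this lower bound. By Assumption \ref{assump_transition_probability} the law of $\mathbf x_{j_0}$ (conditioned on $\mathbf x_{k-1}=0$) is a well-defined Borel probability measure, hence tight: for any $\varepsilon$ there is $M$ with $\mathbb P(\|\mathbf p_{j_0}\| \le M) \ge 1-\varepsilon$. Taking $\|\hat p_{j_0}\| > M + \sqrt{\underline c_{j_0}}$ forces $d^2(\mathbf x_{j_0},\hat x_{j_0}) > \underline c_{j_0}$ on that high-probability event, so the lower bound exceeds $\underline c_{j_0}(1-\varepsilon)$; thus $\mathcal G$ approaches a value at least $\underline c_{j_0}$ as $\|\hat p_{j_0}\| \to \infty$, uniformly in direction. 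In the generic situation where $m < \underline c_{j_0}$ for every $j_0$, I can therefore pick a single $R$ (the maximum over the finitely many slots of the radii just described) so that any estimate with some $\|\hat p_{j_0}\| > R$ has $\mathcal G > m$; every minimizing sequence is then eventually contained in the set $\mathbb K$ with $\|\hat p_j\| \le R$ for all $j$, giving $\inf_{\mathbb K}\mathcal G = m$ as required.

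The main obstacle is the boundary case in which $m \ge \underline c_{j_0}$ for some slot $j_0$, which occurs exactly when the capping forces transmission so early that the estimate $\hat x_{j_0}$ becomes irrelevant to $\mathcal G$; here the lower bound no longer confines $\hat p_{j_0}$, and the per-slot coercivity couples with the other coordinates. I expect to resolve this using the monotonicity from the first structural fact: since $J_{j_0}^\ast \le c_{j_0}'$, replacing such an $\hat x_{j_0}$ by any bounded value cannot increase $\mathcal G$ (the worst case is the saturated value attained in the limit $\|\hat p_{j_0}\|\to\infty$), so the offending coordinate may be fixed to $0$ and the argument repeated on the remaining, genuinely active slots, for which the generic coercivity applies. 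Verifying that these replacements can be carried out with a single uniform radius $R$ independent of the other coordinates is the delicate point, and is exactly where the uniform cap $J_j^\ast \le c_j'$ together with the tightness estimate is indispensable.
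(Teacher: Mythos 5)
Your coercivity bound $\mathcal G\left( \hat x_{k:N} \right) \geq \mathbb E\left[ \min \left\{ d^2\left( \mathbf x_{j_0}, \hat x_{j_0} \right), \underline c_{j_0} \right\} \right]$ is correct (it follows by backward induction on \eqref{eq_cost-to-go_02} using $\underline c_{j_0} \leq c_j'$ for $j \leq j_0$), and in the ``generic'' case $\inf \mathcal G < \underline c_{j_0}$ for all $j_0$ it does yield, together with tightness and the continuity of $\mathcal G$, a clean and more elementary argument than the paper's. The problem is that the boundary case you flag is not a corner case to be patched: it occurs precisely when some later stopping cost $c_{j_0}'$ falls below $\inf \mathcal G$ (e.g.\ a small $c_N'$ combined with unavoidable accumulated distortion before time $N$), and there your per-slot lower bound says nothing, so the entire burden of the lemma lands on the step you defer. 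Your proposed resolution does not work as stated: it is false that ``replacing such an $\hat x_{j_0}$ by any bounded value cannot increase $\mathcal G$.'' For a finite but large $\left\| \hat p_{j_0} \right\|$ the minimum in \eqref{eq_cost-to-go_02} is \emph{not} saturated on the set $\overline{\mathbb D}_{j_0}$ near $\hat p_{j_0}$, and moving $\hat x_{j_0}$ to $0$ raises $J_{j_0}^\ast$ to $c_{j_0}'$ there while lowering it elsewhere; the net effect on $\mathcal G$ is a trade-off that must be quantified, not a pointwise monotone improvement. What actually closes the argument is a \emph{uniform} quantitative gain: one must show that along any escaping minimizing sequence the conditional probability of not transmitting at $j_0$ tends to $0$ (so the conditional cost-to-go tends to $c_{j_0}'$, as in \eqref{eq_proof_lemma_optimal_solution_containment_03}), and then exhibit bounded replacement estimates that beat $c_{j_0}'$ by a fixed margin $\epsilon \cdot \delta_{j_0}$, where $\delta_{j_0}>0$ is a lower bound, uniform over the preceding state, on the probability of reaching an open set $\mathbb B$ on which the new running-plus-continuation cost is below $c_{j_0}' - \epsilon$.

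Obtaining that uniformity is where the paper spends its effort and where your sketch has no substitute: it requires (i) knowing that the coordinates $\hat x_k^{(i)}, \cdots, \hat x_{j_0-1}^{(i)}$ preceding the first escaping slot remain bounded (Lemma \ref{lemma_bounded_estimates}), (ii) confining the conditional law of $\mathbf x_{j_0-1}$ to a fixed compact set via Proposition \ref{prop_decision_set_containment}, and (iii) invoking Lemma \ref{lemma_non-degenerate_policy} together with Assumption \ref{assump_transition_probability} to make $\mathbb P\left( \mathbf x_{j_0} \in \mathbb B \mid \cdots \right) \geq \delta_{j_0}$ uniformly in $i$. There is also a latent circularity in ``fix the offending coordinate to $0$ and repeat on the remaining slots'': the lemma demands a single compact $\mathbb K$ valid for \emph{all} $\hat x_{k:N}$, and after restricting coordinates you must re-justify that the restricted infimum still equals $\inf \mathcal G$, which is essentially the statement being proved. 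Your unconditional tightness estimate for the law of $\mathbf x_{j_0}$ cannot supply the needed bound either, because the recursion at step $j_0-1$ involves the conditional law given $\mathbf x_{j_0-1}$, which is not uniformly tight over $x_{j_0-1}$. So the proposal is sound only on the generic branch; the remaining branch is a genuine gap whose repair amounts to reconstructing the paper's contradiction argument.
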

The proof is given in Appendix \ref{proof_section_2}.

\textit{Proof of Theorem \ref{thm_existence_minimizer}:}
Recall the definitions of $\mathcal G_k$ and $\mathcal G$ given in \eqref{eq_mapping_G_j} and \eqref{eq_mapping_G}, respectively. According to Proposition~\ref{prop_mapping_G_j_continuity} and by the fact that $\mathcal G \left( \hat x_{k:N} \right) = \mathcal G_k \left( 0, \hat x_{k:N}\right)$, we can see that $\mathcal G$ is a continuous function. Note that Lemma~\ref{lemma_optimal_solution_containment} implies that, if it exists, a global minimizer of $\mathcal G$ resides in a compact set. In what regards to finding a global minimizer, without loss of generality, we may assume that the domain of $\mathcal G$ is compact. Hence, by the continuity of $\mathcal G$ and compactness of its domain, there exist estimates $\hat x_{k:N}^\ast$ that achieve the global minimum of $\mathcal G$. 

Next, let us choose policies $\boldsymbol{\mathcal P}_{k:N}^\ast$ belonging to $\boldsymbol{\mathfrak P} \left( \hat x_{k:N}^\ast \right)$ using, for instance, Corollary \ref{cor_best_response_mapping_P}. By the definition of $\mathcal G$ given as in \eqref{eq_mapping_G} and by the fact that $\hat x_{k:N}^\ast$ is a global minimizer of $\mathcal G$, we conclude that the policies $\boldsymbol{\mathcal P}_{k:N}^\ast$ and the estimates $x_{k:N}^\ast$ are jointly optimal for \eqref{eq_cost_functional_two-player_stopping_02}. \hfill \QED


\subsection{Iterative Procedure for Finding a Person-by-Person Optimal Solution} \label{section_convergence_pbp_optimal_solutions}
As numerically illustrated in \cite{molin2012_ifac}, the function $\mathcal G$ in \eqref{eq_mapping_G} may be non-convex; consequently, finding a jointly optimal solution for \eqref{eq_cost_functional_two-player_stopping_02} would be computationally intractable. Instead, we seek a person-by-person optimal solution based on Procedure \ref{alg_solution} described below. In the procedure, $\eta$ is a pre-selected non-negative constant that determines a stopping criterion (Line 17), and the function $\mathcal G$ is defined in \eqref{eq_mapping_G}.

\SetAlFnt{\small}
\begin{procedure} \label{alg_solution}
  \LinesNumbered
  \DontPrintSemicolon
  \SetKwInOut{Input}{input}\SetKwInOut{Output}{output}
  \Input{$\eta \geq 0$, $\hat{x}_{k:N}^{(0)}$}
  \Output{$\boldsymbol{\mathcal P}_{k:N}^{(i+1)}, \hat{x}_{k:N}^{(i)}$}
  \Begin{
    $j \gets N$
    
    \While {$j \geq k$} {
      Choose $\boldsymbol{\mathcal P}_j^{(1)}$ according to Corollary \ref{cor_best_response_mapping_P} \newline using $\hat x_{k:N}^{(0)}$
      
      $j \gets j-1$
    }
  
    $i \gets 0$

    \Repeat {$\left| \mathcal{G} \left( \hat{x}_{k:N}^{(i)} \right) - \mathcal{G} \left( \hat{x}_{k:N}^{(i-1)} \right) \right| \leq \eta$} {

      $i \gets i+1$
      
      $j \gets k$

      \While {$j \leq N$} {
        Choose $\hat x_j^{(i)}$ according to Corollary \ref{cor_best_response_mapping_X} \newline using $\boldsymbol{\mathcal P}_{k:N}^{(i)}$

        $j \gets j+1$
      }      

      $j \gets N$

      \While {$j \geq k$} {
        Choose $\boldsymbol{\mathcal P}_j^{(i+1)}$ according to Corollary \ref{cor_best_response_mapping_P} \newline using $\hat x_{k:N}^{(i)}$

        $j \gets j-1$
      }
    }
  }  
  \caption{() Finding a Person-by-Person Optimal Solution}
\end{procedure}


Let $\left\{ \left( \boldsymbol{\mathcal P}_{k:N}^{(i)}, \hat{x}_{k:N}^{(i)} \right) \right\}_{i \in \mathbb N}$ be a sequence of solutions generated through repeated computations of policies and estimates by Procedure \ref{alg_solution} (Line $2-16$). In the rest of this section, we discuss convergence of the sequence to a person-by-person optimal solution. We first define convergence of policies and estimates. For notational convenience, we adopt the following: Let $\mathfrak B$ be a Borel $\sigma$-algebra on $\mathbb R^2 \times [0, 2\pi)$. Given $\left\{ \boldsymbol{\mathcal P}_{k:N}^{(i)} \right\}_{i \in \mathbb N}$ and $\boldsymbol{\mathcal P}_{k:N}$, let us define the following: For each $\mathbb A$ in $\mathfrak B$,
\begin{subequations}  \label{eq_probablity_measures}
  \begin{align}
    \mu_{j|j}^{(i)} \left( \mathbb A \right) &= \mathbb P \left( \mathbf x_j \in \mathbb A \,\Big|\, \mathbf R_k^{(i)}=0, \cdots, \mathbf R_j^{(i)}=0 \right) \\
    \mu_{j|j} \left( \mathbb A \right) &= \mathbb P \left( \mathbf x_j \in \mathbb A \,\Big|\, \mathbf R_k=0, \cdots, \mathbf R_j=0 \right)
  \end{align}
\end{subequations}
subject to
\begin{subequations} \label{eq_decision_variables_sequence_limit}
  \begin{align}
    \mathbf R_j^{(i)} &= \boldsymbol{\mathcal P}_j^{(i)} \left( \mathbf x_j  \right) \label{eq_decision_variables_sequence_limit_a} \\
    \mathbf R_j &= \boldsymbol{\mathcal P}_j \left( \mathbf x_j  \right) \label{eq_decision_variables_sequence_limit_b}
  \end{align}
\end{subequations}
for all $i$ in $\mathbb N$ and $j$ in $\{k, \cdots, N\}$.
\begin{mydef} \label{def_policy_convergence}
  Let $\left\{ \boldsymbol{\mathcal P}_{k:N}^{(i)} \right\}_{i \in \mathbb N}$ be a sequence of policies. We say that the sequence \textit{converges} to $\boldsymbol{\mathcal P}_{k:N}$ if the following hold for all $j$ in $\{k, \cdots, N\}$:
  \begin{subequations}
    \begin{align} \label{eq_policies_convergence_01}
      \mu_{j|j}^{(i)} \rightarrow \mu_{j|j}
    \end{align}
    and
    \begin{align} \label{eq_policies_convergence_02}
      \mathbb P \left( \mathbf R_j^{(i)}=0 \,\Big|\, \mathbf R_k^{(i)}=0, \cdots, \mathbf R_{j-1}^{(i)}=0 \right) \rightarrow \mathbb P \left( \mathbf R_j=0 \,\Big|\, \mathbf R_k=0, \cdots, \mathbf R_{j-1}=0 \right)
    \end{align} 
  \end{subequations}
  subject to \eqref{eq_decision_variables_sequence_limit}.\footnote{Equation \eqref{eq_policies_convergence_01} implies that the sequence of the probability measures $\left\{ \mu_{j|j}^{(i)}\right\}_{i \in \mathbb N}$ \textit{converges} to the probability measure $\mu_{j|j}$. See Chapter 9.3 of \cite{dudley_9780511755347} for the definition of convergence of probability measures.} We denote the convergence by ${\boldsymbol{\mathcal P}_{k:N}^{(i)} \Rightarrow \boldsymbol{\mathcal P}_{k:N}}$. In addition, we say that two sets of policies $\boldsymbol{\mathcal P}_{k:N}$ and $\boldsymbol{\mathcal P}_{k:N}'$ are \textit{equal} if the following hold for all $j$ in $\{k, \cdots, N\}$:
  \begin{subequations} \label{eq_equivalent_policies}
    \begin{align}
      \mu_{j|j} = \mu_{j|j}'
    \end{align} 
    and
    \begin{align}
      \mathbb P \left( \mathbf R_j=0 \,\Big|\, \mathbf R_k=0, \cdots, \mathbf R_{j-1}=0 \right) = \mathbb P \left( \mathbf R_j'=0 \,\Big|\, \mathbf R_k'=0, \cdots, \mathbf R_{j-1}'=0 \right)
    \end{align} 
  \end{subequations}
  subject to $\mathbf R_j = \boldsymbol{\mathcal P}_j \left( \mathbf x_j  \right)$ and $\mathbf R_j' = \boldsymbol{\mathcal P}_j' \left( \mathbf x_j  \right)$ for all $j$ in $\{k, \cdots, N\}$.
\end{mydef}

\begin{remark} [Uniqueness of the Limit of Policies]
  Suppose that a sequence of policies $\left\{ \boldsymbol{\mathcal P}_{k:N}^{(i)} \right\}_{i \in \mathbb N}$ converges to both $\boldsymbol{\mathcal P}_{k:N}$ and $\boldsymbol{\mathcal P}_{k:N}'$. Then the two sets of the policies $\boldsymbol{\mathcal P}_{k:N}$ and $\boldsymbol{\mathcal P}_{k:N}'$ are equal. To see this, using the definition of convergence of probability measures, we can derive that
  \begin{align}
    \int_{\mathbb R^2 \times [0, 2\pi)} g \,\mathrm d\mu_{j|j} = \int_{\mathbb R^2 \times [0, 2\pi)} g \,\mathrm d\mu_{j|j}'
  \end{align}
  for every bounded, continuous function ${g: \mathbb R^2 \times [0, 2\pi) \to \mathbb R}$. Based on Lemma 9.3.2 in \cite{dudley_9780511755347}, we can see that \eqref{eq_equivalent_policies} holds for all $j$ in $\{k, \cdots, N\}$.
\end{remark}

\begin{mydef} \label{def_estimate_convergence}
  Let $\left\{ \hat x_{k:N}^{(i)} \right\}_{i \in \mathbb N}$ be a sequence of estimates. We say that the sequence \textit{converges} to $\hat x_{k:N}$ if the following holds for all $j$ in $\{k, \cdots, N\}$:
  \begin{align}
    \lim_{i \to \infty} d\left( \hat x_j^{(i)}, \hat x_j \right) = 0
  \end{align}
  We denote the convergence by $\hat x_{k:N}^{(i)} \Rightarrow \hat x_{k:N}$.
  In addition, we say that two sets of estimates $\hat x_{k:N}$ and $\hat x_{k:N}'$ are \textit{equal} if the following holds for all $j$ in $\{k, \cdots, N\}$:
  \begin{align}
    d\left( \hat x_j, \hat x_j'\right)=0
  \end{align}
\end{mydef}

\begin{mydef}
  Let $\left\{ \boldsymbol{\mathcal P}_{k:N}^{(i)}\right\}_{i \in \mathbb N}$ be a sequence of policies. We say that the policies are \textit{strictly non-degenerate} if there exists a positive constant $\epsilon$ for which the following holds for all $i$ in $\mathbb{N}$ and $j$ in $\{k, \cdots, N\}$:
  \begin{align} \label{eq_decision_sets_non-degeneracy}
    \mathbb P \left( \mathbf R_j^{(i)}=0 \,\Big|\, \mathbf R_k^{(i)}=0, \cdots, \mathbf R_{j-1}^{(i)}=0 \right) \geq \epsilon
  \end{align}
  subject to \eqref{eq_decision_variables_sequence_limit_a}.
\end{mydef}

Recall that the sequence of solutions $\left\{ \left( \boldsymbol{\mathcal P}_{k:N}^{(i)}, \hat{x}_{k:N}^{(i)} \right) \right\}_{i \in \mathbb N}$ generated by Procedure \ref{alg_solution} satisfies the following for all $i$ in $\mathbb N$:
\begin{subequations} \label{eq_best_response_alternation}
  \begin{align}
    \boldsymbol{\mathcal P}_{k:N}^{(i)} &\in \boldsymbol{\mathfrak P}\left( \hat x_{k:N}^{(i-1)} \right) \label{eq_best_response_alternation_a} \\
    \hat x_{k:N}^{(i)} &\in \mathfrak X \left( \boldsymbol{\mathcal P}_{k:N}^{(i)} \right) \label{eq_best_response_alternation_b}
  \end{align}
\end{subequations}
The following theorem states convergence of the sequence to a person-by-person optimal solution.

\begin{thm} \label{thm_procedure_convergence}
  Consider a sequence of solutions $\left\{ \left( \boldsymbol{\mathcal P}_{k:N}^{(i)}, \hat{x}_{k:N}^{(i)} \right) \right\}_{i \in \mathbb N}$ satisfying \eqref{eq_best_response_alternation}. Suppose that the policies $\left\{ \boldsymbol{\mathcal P}_{k:N}^{(i)}\right\}_{i \in \mathbb N}$ are strictly non-degenerate. Then, the sequence has a convergent subsequence, and the limit of any convergent subsequence is a person-by-person optimal solution.
\end{thm}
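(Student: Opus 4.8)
The plan is to treat Procedure~\ref{alg_solution} as an alternating (Lloyd-type) minimization and to establish convergence through monotonicity of the cost combined with compactness and continuity. For brevity write $V \left( \boldsymbol{\mathcal P}_{k:N}, \hat x_{k:N} \right) = \mathbb E_{\mathbf x_k} \left[ J_k \left( \mathbf x_k, \boldsymbol{\mathcal P}_{k:N}, \hat x_{k:N} \right) \right]$, so that by Proposition~\ref{prop_best_response_mapping_P} one has $\min_{\boldsymbol{\mathcal P}_{k:N}} V \left( \boldsymbol{\mathcal P}_{k:N}, \hat x_{k:N} \right) = \mathcal G \left( \hat x_{k:N} \right)$. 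The first step is the monotonicity chain forced by \eqref{eq_best_response_alternation}: since $\hat x_{k:N}^{(i)} \in \mathfrak X \left( \boldsymbol{\mathcal P}_{k:N}^{(i)} \right)$ and $\boldsymbol{\mathcal P}_{k:N}^{(i)} \in \boldsymbol{\mathfrak P} \left( \hat x_{k:N}^{(i-1)} \right)$,
\begin{align*}
  \mathcal G \left( \hat x_{k:N}^{(i)} \right) \leq V \left( \boldsymbol{\mathcal P}_{k:N}^{(i)}, \hat x_{k:N}^{(i)} \right) \leq V \left( \boldsymbol{\mathcal P}_{k:N}^{(i)}, \hat x_{k:N}^{(i-1)} \right) = \mathcal G \left( \hat x_{k:N}^{(i-1)} \right).
\end{align*}
Thus $\left\{ \mathcal G \left( \hat x_{k:N}^{(i)} \right) \right\}_{i}$ is non-increasing and bounded below by $0$, so it converges to some $G^\ast \geq 0$, and the squeezed middle term $V \left( \boldsymbol{\mathcal P}_{k:N}^{(i)}, \hat x_{k:N}^{(i)} \right)$ converges to $G^\ast$ as well.

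Next I would secure the compactness needed to extract a convergent subsequence. For the estimates, Corollary~\ref{cor_best_response_mapping_X} expresses each $\hat x_j^{(i)}$ through conditional means: the orientation component lies in $[0, 2\pi)$, while the positional components satisfy $\bigl| \hat p_{1,j}^{(i)} \bigr| \leq \mathbb E \left[ \left| \mathbf p_{1,j} \right| \right] / \mathbb P \left( \mathbf R_k^{(i)}=0, \cdots, \mathbf R_j^{(i)}=0 \right)$ and likewise for $\hat p_{2,j}^{(i)}$, where the denominator is bounded below by $\epsilon^{j-k+1}$ through strict non-degeneracy \eqref{eq_decision_sets_non-degeneracy}. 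Hence all estimates lie in a fixed compact set and a subsequence of $\left\{ \hat x_{k:N}^{(i)} \right\}$ converges. For the policies I would note that the marginal law of $\mathbf x_j$ under $\mathbf x_{k-1}=0$ does not depend on $i$, so it is tight, and the same lower bound makes the family of conditional measures $\left\{ \mu_{j|j}^{(i)} \right\}_i$ tight; Prohorov's theorem then yields weak convergence of $\mu_{j|j}^{(i)}$ along a further subsequence, while the bounded quantities $\mathbb P \left( \mathbf R_j^{(i)}=0 \,\big|\, \mathbf R_k^{(i)}=0, \cdots, \mathbf R_{j-1}^{(i)}=0 \right)$ converge along a final refinement. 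This produces a common subsequence $i_m$ with $\boldsymbol{\mathcal P}_{k:N}^{(i_m)} \Rightarrow \boldsymbol{\mathcal P}_{k:N}^\ast$ and $\hat x_{k:N}^{(i_m)} \Rightarrow \hat x_{k:N}^\ast$ in the senses of Definitions~\ref{def_policy_convergence} and~\ref{def_estimate_convergence}.

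The third step identifies the limit as a simultaneous best response. Using continuity of $\mathcal G$ (established in the proof of Theorem~\ref{thm_existence_minimizer}) together with continuity of $V$, I would pass to the limit along $i_m$: continuity of $\mathcal G$ gives $\mathcal G \left( \hat x_{k:N}^\ast \right) = G^\ast$, and continuity of $V$ with the squeeze gives $V \left( \boldsymbol{\mathcal P}_{k:N}^\ast, \hat x_{k:N}^\ast \right) = G^\ast$. Since $V \left( \boldsymbol{\mathcal P}_{k:N}^\ast, \hat x_{k:N}^\ast \right) = G^\ast = \mathcal G \left( \hat x_{k:N}^\ast \right) = \min_{\boldsymbol{\mathcal P}_{k:N}} V \left( \boldsymbol{\mathcal P}_{k:N}, \hat x_{k:N}^\ast \right)$, the limit policies attain the policy minimum, so $\boldsymbol{\mathcal P}_{k:N}^\ast \in \boldsymbol{\mathfrak P} \left( \hat x_{k:N}^\ast \right)$. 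For the estimates, $\hat x_{k:N}^{(i_m)} \in \mathfrak X \left( \boldsymbol{\mathcal P}_{k:N}^{(i_m)} \right)$ gives $V \left( \boldsymbol{\mathcal P}_{k:N}^{(i_m)}, \hat x_{k:N}^{(i_m)} \right) \leq V \left( \boldsymbol{\mathcal P}_{k:N}^{(i_m)}, \hat x_{k:N}' \right)$ for every fixed $\hat x_{k:N}'$; letting $m \to \infty$ and using continuity of $V$ in its first argument yields $G^\ast \leq V \left( \boldsymbol{\mathcal P}_{k:N}^\ast, \hat x_{k:N}' \right)$, whence $\hat x_{k:N}^\ast \in \mathfrak X \left( \boldsymbol{\mathcal P}_{k:N}^\ast \right)$. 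By Definition~\ref{def_pbp_optimal_sub} the pair $\left( \boldsymbol{\mathcal P}_{k:N}^\ast, \hat x_{k:N}^\ast \right)$ is then person-by-person optimal.

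The hard part will be justifying the continuity of $V$ in both arguments, on which the limit-passing relies, and the identification of the weak limit of the conditional measures with an admissible policy. The difficulty is that the running cost $d^2 \left( \mathbf x_j, \hat x_j \right)$ grows quadratically in the unbounded position while policy convergence is only weak convergence of conditional laws, so neither $V \left( \boldsymbol{\mathcal P}_{k:N}^{(i_m)}, \hat x_{k:N}^{(i_m)} \right) \to V \left( \boldsymbol{\mathcal P}_{k:N}^\ast, \hat x_{k:N}^\ast \right)$ nor convergence of the best-response means follows from weak convergence alone; one must verify that $\left| \mathbf p_j \right|^2$ is uniformly integrable under $\left\{ \mu_{j|j}^{(i_m)} \right\}$, which again rests on the uniform lower bound from strict non-degeneracy together with finite second moments of the process. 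Finally, to guarantee that the limiting conditional laws correspond to a threshold-type best response (Corollary~\ref{cor_best_response_mapping_P}), I would use Assumption~\ref{assump_transition_probability} to show that the boundary $\left\{ x_j : d^2 \left( x_j, \hat x_j^\ast \right) + \mathbb E_{\mathbf x_{j+1}} \left[ J_{j+1}^\ast \,\big|\, \mathbf x_j = x_j \right] = c_j' \right\}$ carries no mass, so that $\overline{\mathbb D}_j$ and $\underline{\mathbb D}_j$ coincide up to a null set. Establishing these regularity facts is the technical core; the monotonicity-and-continuity skeleton above is then routine.
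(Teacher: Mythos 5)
Your skeleton --- monotonicity of $\mathcal G$ along the iterates, compactness to extract a convergent subsequence, and identification of the limit as a simultaneous best response --- matches the paper's high-level strategy, but the two places where you defer the work are exactly where the paper's proof lives, and your proposed routes through them do not go through under the stated assumptions. First, boundedness of the estimates: the bound $\bigl| \hat p_{1,j}^{(i)} \bigr| \leq \mathbb E \left[ \left| \mathbf p_{1,j} \right| \right] / \epsilon^{j-k+1}$, and the later appeal to uniform integrability of $\left| \mathbf p_j \right|^2$ under $\mu_{j|j}^{(i)}$, presuppose finite first and second moments of the position process, which Assumption \ref{assump_transition_probability} does not provide. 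The paper's Lemma \ref{lemma_bounded_estimates_s} avoids moments entirely: Proposition \ref{prop_decision_set_containment} shows that, because $\boldsymbol{\mathcal P}_{k:N}^{(i)} \in \boldsymbol{\mathfrak P} \bigl( \hat x_{k:N}^{(i-1)} \bigr)$, the conditional law $\mu_{j|j}^{(i)}$ is supported on the compact ball $\bigl\{ x : d^2 \bigl( x, \hat x_j^{(i-1)} \bigr) \leq c_j' \bigr\}$; if $\hat x_j^{(i-1)}$ escaped to infinity, tightness of the unconditional law would force $\mathbb P \bigl( \mathbf R_j^{(i)}=0 \,\big|\, \cdots \bigr) \to 0$, contradicting strict non-degeneracy. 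That same compact-support fact, not uniform integrability, is what later lets expectations pass to the limit.

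Second, and more seriously, your final step rests on continuity of $V$ in the policy argument under the weak convergence of Definition \ref{def_policy_convergence}, both to conclude $V \bigl( \boldsymbol{\mathcal P}_{k:N}^{(i_m)}, \hat x_{k:N}^{(i_m)} \bigr) \to V \bigl( \boldsymbol{\mathcal P}_{k:N}^\ast, \hat x_{k:N}^\ast \bigr)$ and to pass to the limit in $V \bigl( \boldsymbol{\mathcal P}_{k:N}^{(i_m)}, \hat x_{k:N}' \bigr)$ for fixed $\hat x_{k:N}'$. You flag this as the hard part but give no argument, and the paper never proves such a continuity statement. Instead it proves two closedness results: Lemma \ref{lemma_estimates_convergence_spp} ($\hat x_{k:N} \in \mathfrak X \left( \boldsymbol{\mathcal P}_{k:N} \right)$, via the Skorokhod representation applied to the compactly supported conditional laws) and Lemma \ref{lemma_P_closed} ($\boldsymbol{\mathcal P}_{k:N} \in \boldsymbol{\mathfrak P} \left( \hat x_{k:N}' \right)$, where $\hat x_{k:N}'$ is the limit of the \emph{previous} iterates $\hat x_{k:N}^{(i-1)}$ --- a subsequence along which both $\hat x_{k:N}^{(i)}$ and $\hat x_{k:N}^{(i-1)}$ converge must be extracted, which your bookkeeping omits; the proof constructs the limit policy from the weak limits of $\mu_{j|j}^{(i)}$ via a Radon--Nikodym derivative and verifies optimality through the set inclusions of Lemma \ref{lemma_set_convergence}). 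The final identification $\boldsymbol{\mathcal P}_{k:N} \in \boldsymbol{\mathfrak P} \left( \hat x_{k:N} \right)$ is then obtained by contradiction using only continuity of $\mathcal G$ (a function of the estimates alone) together with the monotonicity you correctly established: if $\boldsymbol{\mathcal P}_{k:N} \notin \boldsymbol{\mathfrak P} \left( \hat x_{k:N} \right)$ one derives $\mathcal G \left( \hat x_{k:N} \right) < \mathcal G \left( \hat x_{k:N}' \right)$, while monotone convergence of $\mathcal G \bigl( \hat x_{k:N}^{(i)} \bigr)$ forces the two values to coincide. Without the two closedness lemmas, your proposal is an outline rather than a proof; the ``routine skeleton'' is the part the paper dispatches in half a page, and the deferred regularity facts are its Appendix.
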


To prove Theorem \ref{thm_procedure_convergence}, we need the following three lemmas.


\begin{lemma} \label{lemma_bounded_estimates_s}
  Consider a sequence of solutions $\left\{ \left( \boldsymbol{\mathcal P}_{k:N}^{(i)}, \hat{x}_{k:N}^{(i)} \right) \right\}_{i \in \mathbb N}$ satisfying \eqref{eq_best_response_alternation}. Suppose that the policies $\left\{ \boldsymbol{\mathcal P}_{k:N}^{(i)}\right\}_{i \in \mathbb N}$ are strictly non-degenerate. Then the sequence $\left\{ \hat x_j^{(i)} \right\}_{i \in \mathbb N}$ is bounded for all $j$ in $\{k, \cdots, N\}$.
\end{lemma}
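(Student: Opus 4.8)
The plan is to bound the three coordinates of $\hat x_j^{(i)}$ separately, exploiting the fact that the squared metric decouples. Expanding the Frobenius norm in the definition of $d$ gives
$$d^2\left(x_j,\hat x_j\right) = \left(p_{1,j}-\hat p_{1,j}\right)^2 + \left(p_{2,j}-\hat p_{2,j}\right)^2 + 2\left(\cos\theta_j-\cos\hat\theta_j\right)^2 + 2\left(\sin\theta_j-\sin\hat\theta_j\right)^2,$$
so the position and orientation contributions separate additively. The orientation estimate $\hat\theta_j^{(i)}$ always lies in $[0,2\pi)$ and is therefore trivially bounded, and all the work concerns the position estimates $\hat p_{1,j}^{(i)}$ and $\hat p_{2,j}^{(i)}$.

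First I would invoke that $\hat x_{k:N}^{(i)}\in\mathfrak X\left(\boldsymbol{\mathcal P}_{k:N}^{(i)}\right)$ for every $i\geq 1$ by \eqref{eq_best_response_alternation_b}. By Proposition \ref{prop_best_response_mapping_X} each $\hat x_j^{(i)}$ minimizes $\mathbb E\left[d^2\left(\mathbf x_j,\cdot\right)\,\Big|\,\mathbf R_k^{(i)}=0,\cdots,\mathbf R_j^{(i)}=0\right]$, and since the displayed expansion makes this minimization separate coordinatewise, the minimizing position components are exactly the conditional means; this is precisely the form given in Corollary \ref{cor_best_response_mapping_X}, namely
$$\hat p_{1,j}^{(i)} = \mathbb E\left[\mathbf p_{1,j}\,\Big|\,\mathbf R_k^{(i)}=0,\cdots,\mathbf R_j^{(i)}=0\right],$$
and likewise for $\hat p_{2,j}^{(i)}$.

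Next I would produce a bound free of $i$. Writing $E_j^{(i)}=\left\{\mathbf R_k^{(i)}=0,\cdots,\mathbf R_j^{(i)}=0\right\}$, the conditional mean equals $\mathbb E[\mathbf p_{1,j}\mathbf 1_{E_j^{(i)}}]/\mathbb P(E_j^{(i)})$. The numerator is at most $\mathbb E[\,|\mathbf p_{1,j}|\,]$ in absolute value, and this is finite: from \eqref{eq_spp_model} with $\mathbf x_{k-1}=0$ and $|\cos(\cdot)|\leq 1$ one gets $|\mathbf p_{1,j}|\leq\sum_{l=k-1}^{j-1}|\mathbf v_l|$, which is integrable. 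For the denominator, the chain rule gives $\mathbb P(E_j^{(i)})=\prod_{l=k}^{j}\mathbb P\left(\mathbf R_l^{(i)}=0\,\Big|\,\mathbf R_k^{(i)}=0,\cdots,\mathbf R_{l-1}^{(i)}=0\right)$, and strict non-degeneracy \eqref{eq_decision_sets_non-degeneracy} bounds each factor below by $\epsilon$, so $\mathbb P(E_j^{(i)})\geq\epsilon^{\,j-k+1}>0$. Hence $|\hat p_{1,j}^{(i)}|\leq\epsilon^{-(j-k+1)}\,\mathbb E[\,|\mathbf p_{1,j}|\,]$, a bound that does not depend on $i$, and the same estimate holds for $\hat p_{2,j}^{(i)}$.

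Combining the three coordinates, for each fixed $j$ the estimates $\hat x_j^{(i)}$ are uniformly bounded over $i\geq 1$; adjoining the single input value $\hat x_j^{(0)}$ preserves boundedness, so $\left\{\hat x_j^{(i)}\right\}_{i\in\mathbb N}$ is bounded. The only substantive point is ruling out that $\mathbb P(E_j^{(i)})$ collapses to zero along the sequence, which is exactly what the strict non-degeneracy hypothesis supplies; without it the conditional means could run off to infinity. The finiteness of $\mathbb E[\,|\mathbf p_{1,j}|\,]$ is a minor prerequisite, immediate from the bounded trigonometric factors in the SPP dynamics together with integrability of the velocity process.
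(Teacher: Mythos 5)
Your route is genuinely different from the paper's: you work from the estimator-side best response \eqref{eq_best_response_alternation_b}, identify the position components of $\hat x_j^{(i)}$ with conditional means, and bound them by $\epsilon^{-(j-k+1)}\,\mathbb E\left[\left|\mathbf p_{1,j}\right|\right]$ using strict non-degeneracy to control the denominator. The paper instead proves the more general Lemma \ref{lemma_bounded_estimates} by contradiction from the policy-side best response \eqref{eq_best_response_alternation_a}: if $\hat x_{j'}^{(i_l-1)}$ diverged, then by Proposition \ref{prop_decision_set_containment} the conditional law of $\mathbf x_{j'}$ given no transmission would be supported in balls $\mathbb K_{j'}^{(i_l)}$ escaping every compact set, so tightness of the \emph{unconditional} law of $\mathbf x_{j'}$ (Lemma \ref{lemma_tightness}) forces $\mathbb P(\mathbf x_{j'}\in\mathbb K_{j'}^{(i_l)})\to 0$ and hence the no-transmission probability to zero, contradicting \eqref{eq_decision_sets_non-degeneracy}. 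Your decomposition of $d^2$, the chain-rule lower bound $\mathbb P(E_j^{(i)})\geq\epsilon^{\,j-k+1}$, and the reindexing to include $\hat x_j^{(0)}$ are all correct.

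The gap is the step you dismiss as a ``minor prerequisite'': the finiteness of $\mathbb E\left[\left|\mathbf p_{1,j}\right|\right]$. Assumption \ref{assump_transition_probability} imposes only Feller continuity and full support on the transition kernel and places \emph{no} moment conditions on $\mathbf v_k$; a heavy-tailed translational velocity (e.g.\ with infinite mean) is admissible, in which case $\mathbb E\left[\left|\mathbf p_{1,j}\right|\right]=\infty$ and your bound $\left|\hat p_{1,j}^{(i)}\right|\leq\epsilon^{-(j-k+1)}\mathbb E\left[\left|\mathbf p_{1,j}\right|\right]$ is vacuous, while the lemma itself remains true (and is proved by the paper's tightness argument, which holds for every Borel probability measure on a Polish space and needs no moments). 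Repairing this without the moment assumption essentially forces you back to the paper's mechanism: use $\mu_{j|j}^{(i)}(\mathbb A)\leq\mathbb P(\mathbf x_j\in\mathbb A)/\epsilon^{\,j-k+1}$ together with tightness to show the conditional law puts mass at least $1/2$ on a fixed compact set, then invoke Proposition \ref{prop_decision_set_containment} to keep $\hat x_j^{(i-1)}$ within $\sqrt{c_j'}$ of that set. A secondary point: membership in $\mathfrak X\left(\boldsymbol{\mathcal P}_{k:N}^{(i)}\right)$ pins $\hat p_{1,j}^{(i)}$ to the conditional mean only when the conditional second moment is finite (otherwise every point attains the infinite minimum in Proposition \ref{prop_best_response_mapping_X}); this does hold here, but only because Proposition \ref{prop_decision_set_containment} makes the conditional law compactly supported, a fact your argument should cite explicitly.
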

Lemma \ref{lemma_bounded_estimates_s} is a special case of Lemma \ref{lemma_bounded_estimates} given in Appendix \ref{proof_section_2}.

\begin{lemma} \label{lemma_estimates_convergence_spp}
  Consider a sequence of solutions $\left\{ \left( \boldsymbol{\mathcal P}_{k:N}^{(i)}, \hat{x}_{k:N}^{(i)} \right) \right\}_{i \in \mathbb N}$ satisfying \eqref{eq_best_response_alternation}. Suppose that for an infinite subset $\left\{ i_l \right\}_{l \in \mathbb N}$ of $\mathbb N$, the following hold: $\boldsymbol{\mathcal P}_{k:N}^{(i_l)} \Rightarrow \boldsymbol{\mathcal P}_{k:N}$, $\hat x_{k:N}^{(i_l)} \Rightarrow \hat x_{k:N}$, and $\hat x_{k:N}^{(i_l-1)} \Rightarrow \hat x_{k:N}'$. Then the estimates $\hat x_{k:N}$ belong to $\mathfrak X \left( \boldsymbol{\mathcal P}_{k:N} \right)$.
\end{lemma}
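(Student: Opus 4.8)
The plan is to exploit the explicit description of the best-response estimates furnished by Corollary \ref{cor_best_response_mapping_X} and pass it to the limit along the subsequence $\{i_l\}$. Note first that only the inclusion $\hat x_{k:N}^{(i_l)} \in \mathfrak X(\boldsymbol{\mathcal P}_{k:N}^{(i_l)})$ from \eqref{eq_best_response_alternation_b}, together with $\boldsymbol{\mathcal P}_{k:N}^{(i_l)} \Rightarrow \boldsymbol{\mathcal P}_{k:N}$ and $\hat x_{k:N}^{(i_l)} \Rightarrow \hat x_{k:N}$, is needed; the hypothesis $\hat x_{k:N}^{(i_l-1)} \Rightarrow \hat x_{k:N}'$ is used only in the companion statement concerning $\boldsymbol{\mathfrak P}$. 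I would begin by checking that the limit policies $\boldsymbol{\mathcal P}_{k:N}$ are non-degenerate: strict non-degeneracy gives $\mathbb P(\mathbf R_j^{(i_l)}=0 \mid \mathbf R_k^{(i_l)}=0, \cdots, \mathbf R_{j-1}^{(i_l)}=0) \geq \epsilon$, and passing this through \eqref{eq_policies_convergence_02} yields $\mathbb P(\mathbf R_j=0 \mid \mathbf R_k=0, \cdots, \mathbf R_{j-1}=0) \geq \epsilon > 0$ for every $j$. Hence Proposition \ref{prop_best_response_mapping_X} and Corollary \ref{cor_best_response_mapping_X} apply to both the prelimit policies and their limit, and it suffices to verify the conditional-moment identities of Corollary \ref{cor_best_response_mapping_X} for $\hat x_{k:N}$ against the measures $\mu_{j|j}$.

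For the orientation component the argument is soft. Since $\hat x_{k:N}^{(i_l)} \in \mathfrak X(\boldsymbol{\mathcal P}_{k:N}^{(i_l)})$, the angles $\hat\theta_j^{(i_l)}$ satisfy \eqref{eq_optimal_estimate_02} with the conditional expectations taken under $\mu_{j|j}^{(i_l)}$. Because $\sin$ and $\cos$ are bounded and continuous, the convergence $\mu_{j|j}^{(i_l)} \to \mu_{j|j}$ from \eqref{eq_policies_convergence_01} forces $\mathbb E[\sin\boldsymbol\theta_j \mid \cdots]$, $\mathbb E[\cos\boldsymbol\theta_j \mid \cdots]$, and hence $\alpha$, to converge to their counterparts under $\mu_{j|j}$. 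The convergence $\hat x_j^{(i_l)} \Rightarrow \hat x_j$ (Definition \ref{def_estimate_convergence}) controls the angular part of the metric $d$ and thus yields $\hat\theta_j^{(i_l)} \to \hat\theta_j$ on the circle; if the limiting $\alpha$ is positive I pass \eqref{eq_optimal_estimate_02} to the limit by continuity, while if $\alpha=0$ every angle is optimal and the condition holds vacuously. Either way $\hat\theta_j$ meets the optimality relation for $\mu_{j|j}$.

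The positional component is where the real work lies, because the minimizing identities $\hat p_{1,j}^{(i_l)} = \mathbb E[\mathbf p_{1,j} \mid \cdots]$ and $\hat p_{2,j}^{(i_l)} = \mathbb E[\mathbf p_{2,j} \mid \cdots]$ integrate the \emph{unbounded} coordinate functions, so weak convergence of $\mu_{j|j}^{(i_l)}$ alone does not transfer them. The key step is a uniform-integrability estimate. Writing $\nu_j$ for the law of $\mathbf x_j$ under \eqref{eq_spp_model} with $\mathbf x_{k-1}=0$ and no conditioning, the indicator of the no-transmission event is bounded by one, so $\mu_{j|j}^{(i)}(\mathbb A) \leq \nu_j(\mathbb A)/\mathbb P(\mathbf R_k^{(i)}=0, \cdots, \mathbf R_j^{(i)}=0)$; strict non-degeneracy bounds the denominator below by $\epsilon^{\,j-k+1}$, so the densities $\mathrm d\mu_{j|j}^{(i)}/\mathrm d\nu_j$ are bounded uniformly in $i$. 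Combined with finiteness of the second moment of $\nu_j$ — the same moment control underlying Lemma \ref{lemma_bounded_estimates_s} — this gives a uniform bound on $\mathbb E[\mathbf p_{1,j}^2 + \mathbf p_{2,j}^2 \mid \cdots]$ along the sequence, hence uniform integrability of $\mathbf p_{1,j}$ and $\mathbf p_{2,j}$ under $\{\mu_{j|j}^{(i_l)}\}$. Uniform integrability upgrades the weak convergence $\mu_{j|j}^{(i_l)} \to \mu_{j|j}$ to convergence of the first moments, so $\hat p_{1,j}^{(i_l)} \to \mathbb E[\mathbf p_{1,j} \mid \cdots]$ under $\mu_{j|j}$ and likewise for $\mathbf p_{2,j}$; matching this against $\hat p_{1,j}^{(i_l)} \to \hat p_{1,j}$ identifies $\hat p_{1,j}$ and $\hat p_{2,j}$ with the conditional means under $\mu_{j|j}$.

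Assembling the three components, the limit estimate $\hat x_j$ satisfies exactly the conditions of Corollary \ref{cor_best_response_mapping_X} relative to the non-degenerate limit policies $\boldsymbol{\mathcal P}_{k:N}$ for every $j$ in $\{k, \cdots, N\}$; by Proposition \ref{prop_best_response_mapping_X} this is equivalent to $\hat x_{k:N} \in \mathfrak X(\boldsymbol{\mathcal P}_{k:N})$, which is the claim. The main obstacle is precisely the positional uniform-integrability step: weak convergence of the conditional laws is too weak to move the unbounded coordinate integrals, and it is strict non-degeneracy — through the uniform lower bound on the conditioning probabilities and the resulting uniform density bound against $\nu_j$ — that rescues the argument.
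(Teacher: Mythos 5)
Your overall architecture matches the paper's: characterize the best-response estimates via Corollary \ref{cor_best_response_mapping_X}, transfer the conditional moments to the limit using $\mu_{j|j}^{(i_l)} \rightarrow \mu_{j|j}$, and observe that the bounded functions $\sin\boldsymbol\theta_j$, $\cos\boldsymbol\theta_j$ pass through weak convergence for free. You also correctly isolate the real difficulty, namely that the coordinate functions $\mathbf p_{1,j}$, $\mathbf p_{2,j}$ are unbounded. But your resolution of that difficulty has a gap, and you discard precisely the hypothesis the paper uses to close it. Your uniform-integrability step rests on the bound $\mu_{j|j}^{(i)} \leq \epsilon^{-(j-k+1)}\nu_j$ together with ``finiteness of the second moment of $\nu_j$.'' Assumption \ref{assump_transition_probability} guarantees only continuity and positivity of the transition kernel; it gives the unconditional law $\nu_j$ of $\mathbf x_j$ no finite moments, and nothing in the model rules out a heavy-tailed $\mathbf v_k$ for which $\int \left| p_{1,j} \right| \mathrm d\nu_j = \infty$. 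Your appeal to ``the same moment control underlying Lemma \ref{lemma_bounded_estimates_s}'' is mistaken: the proof of Lemma \ref{lemma_bounded_estimates} uses only tightness of $\nu_j$ (Lemma \ref{lemma_tightness}) and the compact support of the \emph{conditional} laws supplied by Proposition \ref{prop_decision_set_containment}; no moment bounds on $\nu_j$ appear anywhere.

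The hypothesis $\hat x_{k:N}^{(i_l-1)} \Rightarrow \hat x_{k:N}'$, which you declare unnecessary here, is exactly what repairs this. Since $\boldsymbol{\mathcal P}_{k:N}^{(i_l)} \in \boldsymbol{\mathfrak P}\left( \hat x_{k:N}^{(i_l-1)} \right)$ and the sequence $\left\{ \hat x_j^{(i_l-1)} \right\}_{l}$ converges (hence is bounded), Proposition \ref{prop_decision_set_containment} places the entire mass of every $\mu_{j|j}^{(i_l)}$ inside a single compact set $\mathbb K_j$ (a ball of radius $\sqrt{c_j'}$ around a bounded set of centers). On $\mathbb K_j$ the coordinate functions are bounded, so weak convergence alone transfers the conditional means; the paper formalizes this with the Skorokhod representation and a bounded/dominated convergence argument (Theorem 10.3.6 of \cite{dudley_9780511755347}). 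A secondary point: you import strict non-degeneracy of $\left\{ \boldsymbol{\mathcal P}_{k:N}^{(i)} \right\}_{i}$, which is a hypothesis of Lemma \ref{lemma_P_closed} and of Theorem \ref{thm_procedure_convergence} but not of this lemma, and the compact-support route does not need that uniform $\epsilon$. The fix is to reinstate the discarded hypothesis and replace the uniform-integrability estimate by the common-compact-support argument; the remainder of your proof (the trigonometric components and the assembly via Proposition \ref{prop_best_response_mapping_X} and Corollary \ref{cor_best_response_mapping_X}) then goes through essentially as in the paper.
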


\begin{lemma} \label{lemma_P_closed}
  Consider a sequence of solutions $\left\{ \left( \boldsymbol{\mathcal P}_{k:N}^{(i)}, \hat{x}_{k:N}^{(i)} \right) \right\}_{i \in \mathbb N}$ satisfying \eqref{eq_best_response_alternation}. Suppose that the policies $\left\{ \boldsymbol{\mathcal P}_{k:N}^{(i)} \right\}_{i \in \mathbb N}$ are strictly non-degenerate and that for an infinite subset $\left\{ i_l \right\}_{l \in \mathbb N}$ of $\mathbb N$, it holds that $\hat x_{k:N}^{(i_l-1)} \Rightarrow \hat x_{k:N}'$. Then, the sequence $\left\{ \boldsymbol{\mathcal P}_{k:N}^{(i_l)} \right\}_{l \in \mathbb N}$ has a convergent subsequence, and the limit $\boldsymbol{\mathcal P}_{k:N}$ of any convergent subsequence belongs to $\boldsymbol{\mathfrak P} \left( \hat x_{k:N}' \right)$.
\end{lemma}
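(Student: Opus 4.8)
The plan is to combine a compactness argument, which yields a convergent subsequence of policies, with the best-response characterization of Proposition \ref{prop_best_response_mapping_P} and Corollary \ref{cor_best_response_mapping_P}, which identifies the limit. For each $l$ I would write the best-response no-transmission region at stage $j$ as a sublevel set of the continuous map
\[
  h_j^{(l)}(x) = d^2\left(x, \hat x_j^{(i_l-1)}\right) + \mathcal G_{j+1}\left(x, \hat x_{j+1:N}^{(i_l-1)}\right),
\]
so that $\boldsymbol{\mathcal P}_j^{(i_l)}(x)=0$ holds on a measurable set sandwiched between $\{h_j^{(l)}<c_j'\}$ and $\{h_j^{(l)}\le c_j'\}$, exactly as in \eqref{eq_best_decision_sets}. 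Since $\hat x_{k:N}^{(i_l-1)} \Rightarrow \hat x_{k:N}'$ and each $\mathcal G_{j+1}$ is continuous by Proposition \ref{prop_mapping_G_j_continuity}, the functions $h_j^{(l)}$ converge, uniformly on compact sets, to $h_j'(x)=d^2(x,\hat x_j')+\mathcal G_{j+1}(x,\hat x_{j+1:N}')$. The first useful observation is that, because $\mathcal G_{j+1}\ge 0$ and the position part of $d^2$ grows without bound, every no-transmission set is contained in the disk of radius $\sqrt{c_j'}$ about the (bounded) position estimate; hence all decision sets lie inside one fixed compact $\mathbb K_j \subset \mathbb R^2\times[0,2\pi)$, independent of $l$.

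First I would establish existence of a convergent subsequence. The conditional laws $\mu_{j|j}^{(i_l)}$ are supported in $\mathbb K_j$, hence tight, and the transition probabilities $\mathbb P(\mathbf R_j^{(i_l)}=0\mid \mathbf R_k^{(i_l)}=0,\dots)$ lie in $[0,1]$; by Prokhorov's theorem and a diagonal extraction over the finitely many $j\in\{k,\dots,N\}$, I pass to a subsequence along which every $\mu_{j|j}^{(i_l)}$ converges weakly and every transition probability converges. The substance is to exhibit a limit policy inducing these limits. Proceeding by induction on $j$, weak continuity of the transition kernel (Assumption \ref{assump_transition_probability}) propagates $\mu_{j-1|j-1}^{(i_l)} \Rightarrow \mu_{j-1|j-1}$ forward to weak convergence of the predictive laws $\nu_j^{(i_l)}$; for the conditioning step I take a weak limit of the sub-probability measures $\mathbf 1_{\{\boldsymbol{\mathcal P}_j^{(i_l)}=0\}}\,\nu_j^{(i_l)}$, which is absolutely continuous with respect to the limit $\nu_j$ with density $q_j\in[0,1]$. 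Uniform convergence of $h_j^{(l)}$ to $h_j'$ on compacta forces $q_j=1$ on $\{h_j'<c_j'\}$ and $q_j=0$ on $\{h_j'>c_j'\}$, so defining $\boldsymbol{\mathcal P}_j$ to withhold transmission with probability $q_j$ gives a policy obeying exactly the best-response structure of Proposition \ref{prop_best_response_mapping_P}. Strict non-degeneracy keeps each denominator $\ge \epsilon$, so the normalized conditional laws and transition probabilities of $\boldsymbol{\mathcal P}_{k:N}$ are precisely the extracted limits, yielding $\boldsymbol{\mathcal P}_{k:N}^{(i_l)} \Rightarrow \boldsymbol{\mathcal P}_{k:N}$ in the sense of Definition \ref{def_policy_convergence}.

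For the second claim, that the limit of any convergent subsequence lies in $\boldsymbol{\mathfrak P}(\hat x_{k:N}')$, I would argue through the cost itself. Since $\boldsymbol{\mathcal P}_{k:N}^{(i_l)}\in\boldsymbol{\mathfrak P}(\hat x_{k:N}^{(i_l-1)})$ by \eqref{eq_best_response_alternation_a}, Proposition \ref{prop_best_response_mapping_P} gives $\mathbb E_{\mathbf x_k}[J_k(\mathbf x_k,\boldsymbol{\mathcal P}_{k:N}^{(i_l)},\hat x_{k:N}^{(i_l-1)})]=\mathcal G(\hat x_{k:N}^{(i_l-1)})$, and the continuity of $\mathcal G$ established in the proof of Theorem \ref{thm_existence_minimizer} shows the right-hand side converges to $\mathcal G(\hat x_{k:N}')$. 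On the other hand, rewriting the cost recursively through \eqref{eq_cost-to-go_01} expresses it in terms of the running costs $\int d^2(x,\hat x_j^{(i_l-1)})\,\mathrm d\mu_{j|j}^{(i_l)}(x)$ and the transition probabilities; since $d^2(\cdot,\hat x_j^{(i_l-1)})\to d^2(\cdot,\hat x_j')$ uniformly on $\mathbb K_j$ and $\mu_{j|j}^{(i_l)} \Rightarrow \mu_{j|j}$, a backward induction yields $\mathbb E_{\mathbf x_k}[J_k(\mathbf x_k,\boldsymbol{\mathcal P}_{k:N}^{(i_l)},\hat x_{k:N}^{(i_l-1)})] \to \mathbb E_{\mathbf x_k}[J_k(\mathbf x_k,\boldsymbol{\mathcal P}_{k:N},\hat x_{k:N}')]$. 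Equating the two limits gives $\mathbb E_{\mathbf x_k}[J_k(\mathbf x_k,\boldsymbol{\mathcal P}_{k:N},\hat x_{k:N}')]=\mathcal G(\hat x_{k:N}')=\min_{\boldsymbol{\mathcal P}_{k:N}'}\mathbb E_{\mathbf x_k}[J_k(\mathbf x_k,\boldsymbol{\mathcal P}_{k:N}',\hat x_{k:N}')]$, i.e. $\boldsymbol{\mathcal P}_{k:N}\in\boldsymbol{\mathfrak P}(\hat x_{k:N}')$.

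The hard part will be the conditioning step in the first half: on the boundary $\{h_j'=c_j'\}$ the decision-set membership of the approximating policies need not stabilize, and this set may carry positive $\nu_j$-mass, so the limiting no-transmission set is not determined by $\hat x_{k:N}'$ alone. Allowing a randomized limit policy and reading its boundary transmission probability off the weak limit $q_j$ of the indicator functions resolves this, but it requires care to verify that the sub-probability limits are mutually consistent across stages under the forward recursion and that the normalizations remain bounded away from zero; for both, strict non-degeneracy and the uniform compact containment $\mathbb K_j$ are essential.
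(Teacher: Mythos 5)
Your proposal is correct and, for the technically substantive half, follows the same route as the paper: both arguments use the uniform compact containment of the conditional laws (Proposition \ref{prop_decision_set_containment}), Prokhorov's theorem plus a finite diagonal extraction to obtain weak limits $\mu_{j|j}$ and limits $q_j\in[\epsilon,1]$ of the no-transmission probabilities, forward propagation of the predictive laws through the Feller kernel of Assumption \ref{assump_transition_probability}, and a Radon--Nikodym density $f_j\in[0,1]$ of the limit conditional law with respect to the limit predictive law, which is then promoted to a randomized policy that withholds transmission with probability $f_j(x)$ and realizes the extracted limits. Where you diverge is the final identification $\boldsymbol{\mathcal P}_{k:N}\in\boldsymbol{\mathfrak P}\left( \hat x_{k:N}' \right)$: the paper verifies the recursive characterization of Proposition \ref{prop_best_response_mapping_P} stage by stage, first showing (its Claims 3 and 4, via the portmanteau theorem applied to the monotone envelopes $\bigcup_{l \geq i} \overline{\mathbb D}_j^{(l)}$ and $\bigcap_{l \geq i} \underline{\mathbb D}_j^{(l)}$ of Lemma \ref{lemma_set_convergence}) that the limit conditional laws respect the decision sets for $\hat x_{k:N}'$, and then running a backward induction to get $\mathbb E\left[J_j\right]=\mathbb E\left[J_j^\ast\right]$ at every stage. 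You instead equate two limits of the single scalar sequence $\mathcal G\left( \hat x_{k:N}^{(i_l-1)} \right)$ --- one via continuity of $\mathcal G$, one via continuity of the cost functional with respect to the convergence of Definitions \ref{def_policy_convergence} and \ref{def_estimate_convergence}, which holds because \eqref{eq_cost-to-go_01} expresses the cost through exactly the quantities $\mu_{j|j}$ and the no-transmission probabilities that define policy convergence, with running-cost integrands converging uniformly on the common compact supports. This is a genuinely shorter finish, and it renders your analogue of Claims 3 and 4 (the assertion that the density equals $1$ on $\{h_j'<c_j'\}$ and $0$ on $\{h_j'>c_j'\}$) logically unnecessary; that is fortunate, because as written that assertion is the one under-justified step: uniform convergence of $h_j^{(l)}$ alone does not let you pass to the limit when the measures also vary with $l$, and you would need the same portmanteau argument on the monotone set envelopes that the paper deploys. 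The price of your route is that the continuity of the cost in the policy must be made explicit, which the paper never isolates as a lemma; the price of the paper's route is the heavier measure-theoretic bookkeeping of its Claims 3 and 4 and the subsequent induction.
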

The proofs of Lemmas \ref{lemma_estimates_convergence_spp} and \ref{lemma_P_closed} are given in Appendix \ref{proof_lemma_P_closed}.

\textit{Proof of Theorem \ref{thm_procedure_convergence}:} 
We first note that according to Lemma \ref{lemma_bounded_estimates_s}, the sequence $\left\{ \hat x_j^{(i)} \right\}_{i \in \mathbb{N}}$ is contained in a compact set for every $j$ in $\{k, \cdots, N\}$.\footnote{The metric space $\left( \mathbb R^2 \times [0, 2\pi), d\right)$ is proper; hence for any bounded subset of $\mathbb R^2 \times [0, 2\pi)$, we can find a compact set that contains the subset.} Hence, by the compactness, there exists an infinite subset $\mathbb I$ of $\mathbb N$ for which the subsequences $\left\{ \hat x_{k:N}^{(i)} \right\}_{i \in \mathbb I}$ and $\left\{ \hat x_{k:N}^{(i-1)} \right\}_{i \in \mathbb I}$ are both convergent. Let $\hat x_{k:N}$ and $\hat x_{k:N}'$ be the respective limits of the subsequences. Also, according to Lemma \ref{lemma_P_closed}, there is an infinite subset $\mathbb I'$ of $\mathbb I$ for which the subsequence $\left\{ \boldsymbol{\mathcal P}_{k:N}^{(i)} \right\}_{i \in \mathbb I'}$ is convergent. Let $\boldsymbol{\mathcal P}_{k:N}$ be the limit of this subsequence.

To complete the proof, it remains to show that $\boldsymbol{\mathcal P}_{k:N}$ and $\hat x_{k:N}$ constitute a person-by-person optimal solution, i.e., it holds that
\begin{subequations}
  \begin{align}
    \boldsymbol{\mathcal P}_{k:N} &\in \boldsymbol{\mathfrak P} \left( \hat x_{k:N}\right) \label{eq_pbp_requirement_01} \\
    \hat x_{k:N} &\in \mathfrak X \left( \boldsymbol{\mathcal P}_{k:N} \right) \label{eq_pbp_requirement_02}
  \end{align}
\end{subequations}
Equation \eqref{eq_pbp_requirement_02} is ensured by Lemma \ref{lemma_estimates_convergence_spp}, hence it remains to show that \eqref{eq_pbp_requirement_01} is true. 

By contradiction, suppose that the policies $\boldsymbol{\mathcal P}_{k:N}$ do not belong to $\boldsymbol{\mathfrak P} \left( \hat x_{k:N}\right)$. Note that by Lemma~\ref{lemma_P_closed}, $\boldsymbol{\mathcal P}_{k:N}$ belong to $\boldsymbol{\mathfrak P} \left( \hat x_{k:N}' \right)$. We can see that the following relations hold for any policies $\boldsymbol{\mathcal P}_{k:N}'$ belonging to $\boldsymbol{\mathfrak P} \left( \hat x_{k:N}\right)$:
\begin{align} \label{eq_proof_thm_procedure_convergence_01}
  \mathcal G \left( \hat x_{k:N} \right) &= \mathbb E_{\mathbf x_k} \left[ J_k \left(\mathbf x_k, \boldsymbol{\mathcal P}_{k:N}', \hat x_{k:N} \right) \right] \nonumber \\
                                         &\overset{\mathbf{(i)}}{<} \mathbb E_{\mathbf x_k} \left[ J_k \left(\mathbf x_k, \boldsymbol{\mathcal P}_{k:N}, \hat x_{k:N} \right) \right] \nonumber \\
                                         &\overset{\mathbf{(ii)}}{\leq} \mathbb E_{\mathbf x_k} \left[ J_k \left(\mathbf x_k, \boldsymbol{\mathcal P}_{k:N}, \hat x_{k:N}' \right) \right] = \mathcal G \left( \hat x_{k:N}' \right)
\end{align}
$\mathbf{(i)}$ follows from the hypothesis that $\boldsymbol{\mathcal P}_{k:N} \notin \boldsymbol{\mathfrak P} \left( \hat x_{k:N}\right)$; and $\mathbf{(ii)}$ is due to \eqref{eq_pbp_requirement_02}. On the other hand, since $\mathcal G$ is non-negative and decreasing along the sequence $\left\{ \hat x_{k:N}^{(i)} \right\}_{i \in \mathbb N}$, i.e., $\mathcal G \left( \hat x_{k:N}^{(i+1)} \right) \leq \mathcal G \left( \hat x_{k:N}^{(i)} \right)$ holds for all $i$ in $\mathbb N$, it holds that $\lim_{i \to \infty} \mathcal G \left( \hat x_{k:N}^{(i)} \right) = \alpha$ for some real number $\alpha$. In conjunction with the continuity of $\mathcal G$ (see Proposition \ref{prop_mapping_G_j_continuity}), this implies that $\mathcal G \left( \hat x_{k:N} \right) = \mathcal G \left( \hat x_{k:N}' \right) = \alpha$ which contradicts \eqref{eq_proof_thm_procedure_convergence_01}. Therefore we conclude that the policies $\boldsymbol{\mathcal P}_{k:N}$ belong to $\boldsymbol{\mathfrak P} \left( \hat x_{k:N} \right)$. \hfill \QED

\section{Application to Tracking of Animal Movements} \label{section_experimental_results}
In this section, we apply our results to estimation of animal movements over a costly communication link where the performance of the optimal scheme are illustrated using GPS data collected from a monitoring device mounted on an African buffalo.\footnote{The development and deployment of animal-borne monitoring devices were performed under a research grant NSF ECCS 1135726. The GPS data were collected at the Gorongosa National Park, Mozambique.} Fig. \ref{figure_gps} shows the GPS track of the buffalo. To represent the movement of the buffalo, as described in \cite{mcclintock2012_em}, we adopt the SPP model \eqref{eq_spp_model} in which $\mathbf v_k$ and $\boldsymbol{\phi}_k$ are the Weibull and Wrapped Cauchy random processes, respectively. Note that the probability density functions of $\mathbf v_k$ and $\boldsymbol{\phi}_k$ are given as follows:
\begin{subequations}  \label{eq_pdf}
  \begin{align}
    f_{\mathbf v_k} (v) &=  \frac{a_\mathbf{v}}{s_\mathbf{v}} \left( \frac{v}{s_\mathbf{v}} \right)^{a_\mathbf{v}-1} e^{-\left( \frac{v}{s_\mathbf{v}} \right)^{a_\mathbf{v}}}, \text{ for } v \geq 0 \\
    f_{\boldsymbol{\phi}_k} (\phi) &=  \frac{1}{2\pi} \cdot \frac{1-a_{\boldsymbol{\phi}}^2}{ 1+a_{\boldsymbol{\phi}}^2 - 2 a_{\boldsymbol{\phi}} \cos\left(\phi-m_{\boldsymbol{\phi}} \right)}
  \end{align}
\end{subequations}
Using the collected GPS data, we compute the maximum likelihood estimates of the parameters for \eqref{eq_pdf} as follows:
\begin{subequations} \label{eq_pdf_params}
  \begin{align}
    (a_{\mathbf v}, s_{\mathbf v}) &= (1.35, 4.66) \\
    (a_{\boldsymbol{\phi}}, m_{\boldsymbol{\phi}}) &= (0.65, 0.00)
  \end{align}
\end{subequations}
The graphs in Fig. \ref{figure_pdf_fitting} show comparisons between the resulting probability density functions and the histograms obtained from the GPS data.

We have selected the communication costs $c_k=10$ for all $k$ in $\{1, \cdots, N\}$ and the length of the time-horizon ${N=100}$. Using Procedure \ref{alg_solution}, \eqref{eq_solutions}, and \eqref{eq_remark_solution_transformation_03}, we have found the optimal remote estimation scheme where Fig. \ref{figure_estimation_error_ch03} illustrates the performance of the scheme in terms of the state estimation distortion computed by the metric $d\left( x_k, \hat x_k \right)$. Note that the (red) circles on the time axis ($x$-axis) represents the time steps at which the sensing unit transmitted information on the full state $x_k$ to the estimator, and the state estimate $\hat x_k$ was set to $\hat x_k=x_k$ (hence $d\left( x_k, \hat x_k \right)=0$). Our experimental results show that the optimal scheme achieved the error of location estimation less than $5$ meters compared to the total traveled distance of $372.53$ meters; and the information transmissions occurred $32$ times over $100$ time steps.

\begin{figure}
  \centering
  \includegraphics[scale=0.25]{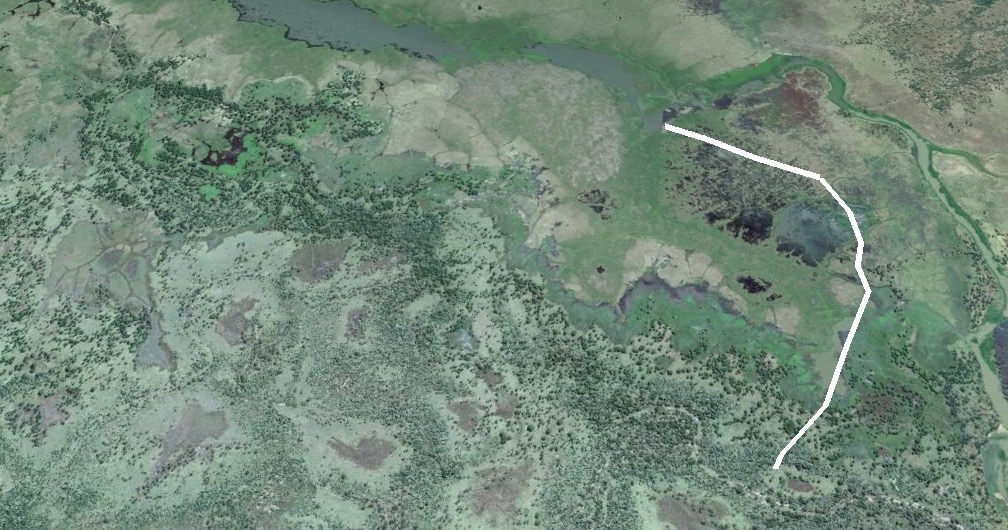}
  \caption {A screenshot of a GPS track (the white trajectory) of an African buffalo in the Google Earth.}
  \label{figure_gps}
\end{figure}

\begin{figure}
  \centering
  \includegraphics[scale=.25]{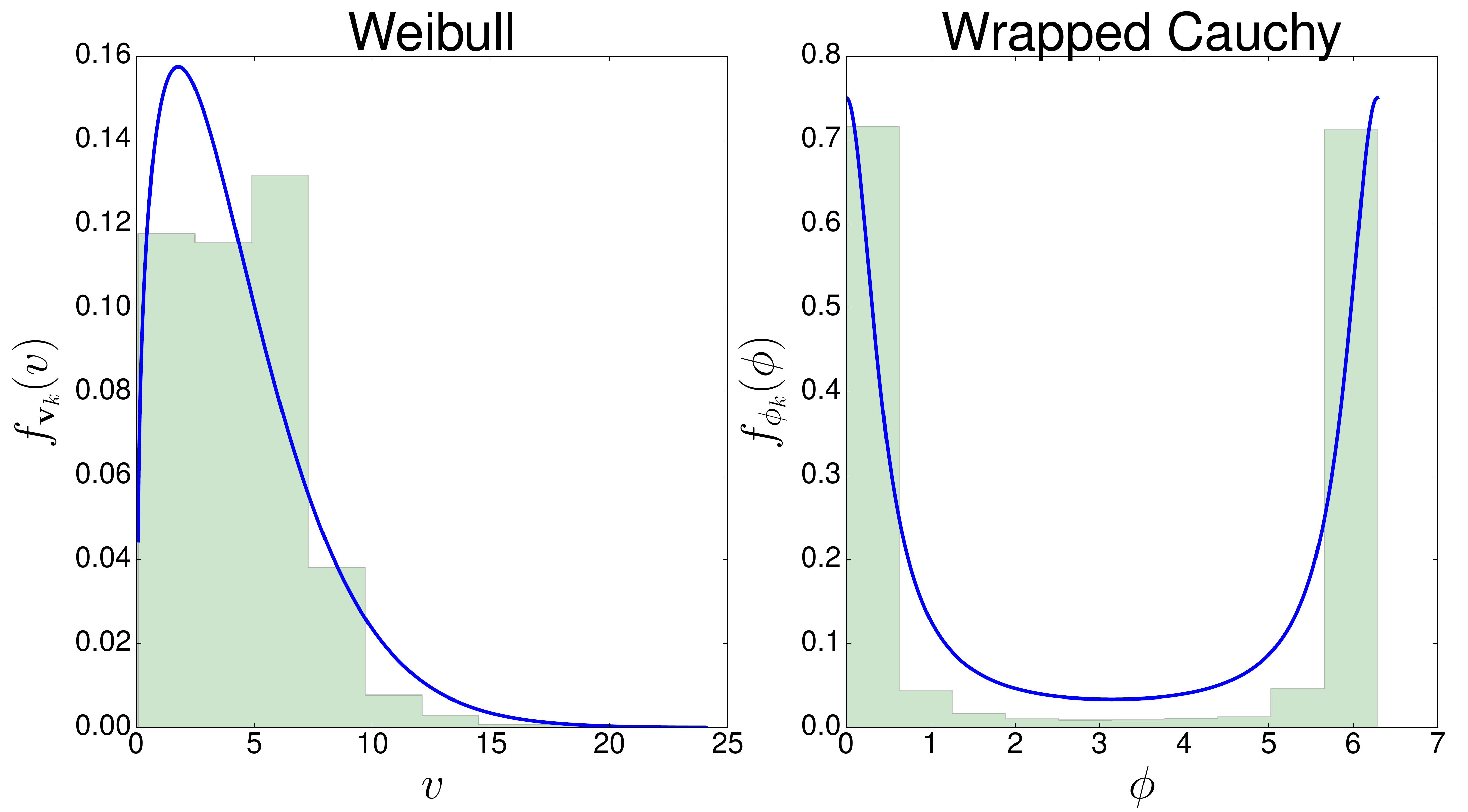}
  \caption {Comparisons between the probability density functions of $\mathbf v_k$ and $\boldsymbol{\phi}_k$ under \eqref{eq_pdf_params} and the histograms obtained from the GPS data.}
  \label{figure_pdf_fitting}
\end{figure}

\begin{figure}
  \centering
  \includegraphics[scale=0.35]{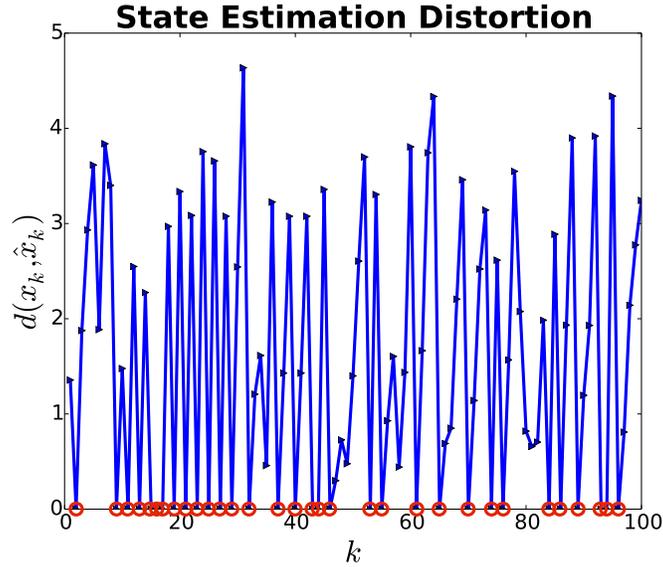}
  \caption {State estimation distortion of the optimal remote estimation scheme}
  \label{figure_estimation_error_ch03}
\end{figure}


\newpage
\begin{appendix}
 \renewcommand{\themydef}{\thesubsection.\arabic{mydef}}
 
   \subsection{On Product Metric Space} \label{section_product_metric_space}
 Given a metric space $\left( \mathbb X, d \right)$, we define a metric $\overline{d}$ on the product space $\mathbb X^k$ as follows: For $x_{1:k} = ( x_{1}, ~ \cdots, ~ x_{k} )$ and $y_{1:k} = ( y_{1}, ~ \cdots, ~ y_{k} )$ in $\mathbb X^k$,
 \begin{align}
   \overline{d}(x_{1:k}, y_{1:k}) = \left[d^2\left( x_{1}, y_{1} \right) + \cdots + d^2\left( x_{k}, y_{k} \right) \right]^{1/2}
 \end{align}
 Note that $\left( \mathbb X^k, \overline{d} \right)$ is a (product) metric space. For $\left( \mathbb X, d \right)$ and $\left( \mathbb X^k, \overline{d} \right)$, the following are true:
 \begin{enumerate} [label=\bfseries (F\arabic*)]
 \item Let $\left\{ \mathbb K_j \right\}_{j=1}^k$ be a collection of compact subsets of $\mathbb X$. The product set $\mathbb K_1 \times \cdots \times \mathbb K_k$ is a compact subset of $\mathbb X^k$.

 \item Consider a sequence $\left\{ x_{1:k}^{(i)} \right\}_{i \in \mathbb N}$ in $\mathbb X^k$. The sequence converges to $x_{1:k}$ in $\mathbb X^k$, i.e., $$\lim_{i \to \infty} \overline{d} \left( x_{1:k}^{(i)}, x_{1:k}\right) = 0$$ if and only if $\left\{ x_j^{(i)} \right\}_{i \in \mathbb N}$ converges to $x_j$ for all $j$ in $\{1, \cdots, k\}$, i.e., $$\lim_{i \to \infty} d \left( x_j^{(i)}, x_j\right) = 0$$ holds for all $j$ in $\{1, \cdots, k\}$. \label{enum_sequence_convergence}
 \end{enumerate}
 
 As a consequence of \ref{enum_sequence_convergence}, a function $\mathcal G: \mathbb X^k \to \mathbb R$ is continuous at $x_{1:k} \in \mathbb X^k$ if for any sequence $\left\{ x_{1:k}^{(i)} \right\}_{i \in \mathbb N}$ for which $\lim_{i \to \infty} d\left( x_j^{(i)}, x_j \right) = 0$ holds for all $j$ in $\{1, \cdots, k\}$, it holds that $\lim_{i \to \infty} \left| \mathcal G \left( x_{1:k}^{(i)} \right) - \mathcal G \left( x_{1:k} \right) \right| = 0$.

 \subsection{On Randomized Policies} \label{remark_randomized_transmission_policies}
Let $\left( \boldsymbol \tau_k, \mathbf x_{\boldsymbol \tau_k}, \mathbf x_k\right) \mapsto \boldsymbol{\mathcal T}_k\left( \boldsymbol \tau_k, \mathbf x_{\boldsymbol \tau_k}, \mathbf x_k\right)$ be a randomized transmission policy defined in Section \ref{section_formulation} that dictates the random variable $\mathbf R_k$ as in \eqref{eq_policy_rule_02}. Given a realization $\left(\tau_k, x_{\tau_k}, x_k \right)$ of $\left( \boldsymbol{\tau}_k, \mathbf x_{\boldsymbol{\tau}_k}, \mathbf x_k \right)$, the variable $\mathbf R_k$ satisfies
 \begin{align*}
   \mathbf R_k = \begin{cases} 0 & \text{with probability } \mathbb P \left( \boldsymbol{\mathcal T}_k \left( \boldsymbol{\tau}_k, \boldsymbol{x}_{\boldsymbol{\tau}_k}, \mathbf x_k \right) = 0 \,\Big|\, \boldsymbol{\tau}_k = \tau_k, \boldsymbol{x}_{\boldsymbol{\tau}_k} = x_{\tau_k}, \mathbf x_k = x_k \right) \\ 1 & \text{with probability } \mathbb P \left( \boldsymbol{\mathcal T}_k \left( \boldsymbol{\tau}_k, \boldsymbol{x}_{\boldsymbol{\tau}_k}, \mathbf x_k \right) = 1 \,\Big|\, \boldsymbol{\tau}_k = \tau_k, \boldsymbol{x}_{\boldsymbol{\tau}_k} = x_{\tau_k}, \mathbf x_k = x_k \right) \end{cases}
 \end{align*}

Let $\mathbf x_j \mapsto \boldsymbol{\mathcal P}_j \left(\mathbf x_j \right)$ be a randomized policy defined in Section \ref{section_two-player_stopping_problem} that dictates the random variable $\mathbf R_j$ as in \eqref{eq_decision_variables_subproblem}. Given a realization $x_j$ of $\mathbf x_j$, the variable $\mathbf R_j$ satisfies
 \begin{align*}
   \mathbf R_j = \begin{cases} 0 & \text{with probability } \mathbb P \left( \boldsymbol{\mathcal P}_j \left( \mathbf x_j \right) = 0 \,\Big|\, \mathbf x_j = x_j \right) \\ 
     1 & \text{with probability } \mathbb P \left( \boldsymbol{\mathcal P}_j \left( \mathbf x_j \right) = 1 \,\Big|\, \mathbf x_j = x_j  \right) \end{cases}
 \end{align*}

 Throughout the work, we restrict our attention to the policies in which 
 $$\mathbb P \left( \boldsymbol{\mathcal P}_j \left( \mathbf x_j \right) = 0 \,\Big|\, \mathbf x_j = x_j  \right)$$
 is a measurable function of $x_j$ on the measurable space $\left( \mathbb R^2 \times [0, 2\pi) , \mathfrak B \right)$ where $\mathfrak B$ is a Borel $\sigma$-algebra on $\mathbb R^2 \times [0, 2\pi)$. As a case in point, consider a (deterministic) policy defined by 
 $$\boldsymbol{\mathcal P}_j(\mathbf x_j) = \begin{cases} 0 & \text{if }\mathbf x_j \in \mathbb D_j \\ 1 & \text{otherwise} \end{cases}$$
 where $\mathbb D_j \in \mathfrak B$. It can be verified that 
 \begin{align*}
   \mathbb P \left( \boldsymbol{\mathcal P}_j \left( \mathbf x_j \right) = 0 \,\Big|\, \mathbf x_j = x_j \right) = \begin{cases} 0 & \text{if } x_j \in \mathbb D_j \\ 1 & \text{otherwise} \end{cases}
 \end{align*}
 is a measurable function of $x_j$.
 
 \subsection{Preliminary Concepts in Probability Theory}
 We first review some of key definitions and results from probability theory \cite{billingsley1995_wiley, dudley_9780511755347}. Let $\left( \mathbb X, d\right)$ be a complete separable metric space, and let $\mathcal T$ and $\mathfrak B$ be a topology and a Borel $\sigma$-algebra derived from the metric, respectively.

 \begin{mydef} \label{def_tightness}
   Let $\mu$ be a probability measure on $\left(\mathbb X, \mathfrak B \right)$. The probability measure is said to be \textit{tight} if for every positive constant $\epsilon$, there exists a compact subset $\mathbb K$ of $\left( \mathbb X, \mathcal T \right)$ for which $\mu \left( \mathbb K \right) > 1 - \epsilon$ holds.
 \end{mydef}

 The following is adopted from Theorem 7.1.4 in \cite{dudley_9780511755347}.
 \begin{lemma} \label{lemma_tightness}
   Any probability measure $\mu$ on $\left( \mathbb X, \mathfrak B\right)$ is tight.
 \end{lemma}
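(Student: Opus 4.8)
The plan is to reproduce the classical argument of Ulam, which exploits precisely the two standing hypotheses on $\left( \mathbb X, d \right)$: separability and completeness. First I would fix an arbitrary $\epsilon > 0$ and use separability to choose a countable dense subset $\left\{ x_n \right\}_{n \in \mathbb N}$ of $\mathbb X$. For each $m$ in $\mathbb N$, density guarantees that the open balls $\left\{ B\left( x_n, 1/m \right) \right\}_{n \in \mathbb N}$ cover $\mathbb X$, so $\mu\left( \bigcup_{n=1}^\infty B\left( x_n, 1/m \right) \right) = 1$. Since $\mu$ is a probability measure, continuity from below yields a finite index $N_m$ with $\mu\left( \bigcup_{n=1}^{N_m} B\left( x_n, 1/m \right) \right) > 1 - \epsilon\,2^{-(m+1)}$.

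Next I would assemble the candidate compact set. Writing $A_m = \bigcup_{n=1}^{N_m} \overline{B\left( x_n, 1/m \right)}$ for the finite union of the corresponding closed balls, I define $\mathbb K = \bigcap_{m=1}^\infty A_m$. This set is closed, being an intersection of closed sets, and it is totally bounded because for every $m$ it is contained in $A_m$, which is covered by finitely many balls of radius $1/m$. As $\mathbb X$ is complete, the closed subset $\mathbb K$ is itself complete; and a metric space that is both complete and totally bounded is compact. Hence $\mathbb K$ is a compact subset of $\left( \mathbb X, \mathcal T \right)$.

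Finally I would bound $\mu\left( \mathbb K \right)$ from below by passing to complements and invoking countable subadditivity, using that each closed union $A_m$ contains the open union estimated above:
\[
  \mu\left( \mathbb K^c \right) = \mu\left( \bigcup_{m=1}^\infty A_m^c \right) \leq \sum_{m=1}^\infty \mu\left( A_m^c \right) \leq \sum_{m=1}^\infty \epsilon\,2^{-(m+1)} = \frac{\epsilon}{2}.
\]
Therefore $\mu\left( \mathbb K \right) \geq 1 - \epsilon/2 > 1 - \epsilon$, and since $\epsilon > 0$ was arbitrary this produces, for every $\epsilon$, a compact set of measure exceeding $1 - \epsilon$, which is exactly tightness in the sense of Definition \ref{def_tightness}. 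I expect the only genuinely substantive step to be the claim that the closed, totally bounded set $\mathbb K$ is compact: this is where completeness of $\mathbb X$ is indispensable and is the place the argument would fail on a non-Polish space. The covering construction and the continuity-from-below and subadditivity estimates are otherwise routine.
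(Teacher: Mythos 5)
Your proof is correct: it is the classical Ulam argument, and the paper itself offers no proof of this lemma, simply citing Theorem 7.1.4 of Dudley, which is exactly this result proved in exactly this way (countable dense set, finite subcovers by $1/m$-balls with error $\epsilon 2^{-(m+1)}$, intersection of the closed finite unions giving a closed totally bounded, hence compact, set). The only detail worth making explicit is the total-boundedness step --- given $\delta>0$ one picks $m$ with $2/m<\delta$ so that the finitely many closed balls of radius $1/m$ covering $\mathbb K$ fit inside $\delta$-balls --- but this is routine and your argument is complete.
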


 \begin{mydef} \label{def_closed_regularity}
   A probability measure $\mu$ defined on $\left(\mathbb X, \mathfrak B \right)$ is said to be \textit{closed regular} if for every $\mathbb A$ in $\mathfrak B$, it holds that
   \begin{align}
     \mu(\mathbb A) = \sup \left\{ \mu(\mathbb F) \,\big|\, \mathbb F \in \mathfrak B \text{ closed}, \mathbb F \subset \mathbb A \right\}
   \end{align}
 \end{mydef}

 From Theorem 7.1.3 in \cite{dudley_9780511755347}, we can state the following Lemma.
 \begin{lemma} \label{lemma_closed_regularity}
   Any probability measure $\mu$ on $\left( \mathbb X, \mathfrak B\right)$ is closed regular.
 \end{lemma}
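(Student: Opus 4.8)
The plan is to prove closed regularity directly, by the standard good-sets argument: I would show that the collection of Borel sets which can be simultaneously approximated from inside by closed sets and from outside by open sets is a $\sigma$-algebra containing every closed set. Since the closed sets generate $\mathfrak B$, this collection must equal $\mathfrak B$, which gives the claim; the outer open approximation is carried along only as bookkeeping so that the induction closes under complementation. Concretely, define
\[
  \mathcal R = \left\{ \mathbb A \in \mathfrak B \,\Big|\, \forall \epsilon > 0 ~\exists~ \mathbb F \text{ closed}, \mathbb G \text{ open}, \mathbb F \subseteq \mathbb A \subseteq \mathbb G, ~ \mu(\mathbb G \setminus \mathbb F) < \epsilon \right\}.
\]

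First I would verify the base case that every closed set $\mathbb F$ lies in $\mathcal R$. Here I take $\mathbb F$ itself as the inner approximant and use the open neighborhoods $\mathbb G_n = \{ x \in \mathbb X \mid d(x, \mathbb F) < 1/n \}$, where $d(x,\mathbb F) = \inf_{y \in \mathbb F} d(x,y)$. Each $\mathbb G_n$ is open because $x \mapsto d(x,\mathbb F)$ is continuous, and $\bigcap_n \mathbb G_n = \mathbb F$ since $\mathbb F$ is closed; as $\mu$ is a (finite) probability measure, continuity from above gives $\mu(\mathbb G_n) \downarrow \mu(\mathbb F)$, so $\mu(\mathbb G_n \setminus \mathbb F)$ drops below any prescribed $\epsilon$.

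Next I would show $\mathcal R$ is a $\sigma$-algebra. Closure under complementation is immediate: if $\mathbb F \subseteq \mathbb A \subseteq \mathbb G$ with $\mu(\mathbb G\setminus \mathbb F)<\epsilon$, then $\mathbb G^c \subseteq \mathbb A^c \subseteq \mathbb F^c$ with $\mathbb G^c$ closed, $\mathbb F^c$ open, and $\mu(\mathbb F^c \setminus \mathbb G^c) = \mu(\mathbb G \setminus \mathbb F) < \epsilon$. For countable unions, given $\mathbb A = \bigcup_n \mathbb A_n$ with each $\mathbb A_n \in \mathcal R$, I pick closed $\mathbb F_n$ and open $\mathbb G_n$ with $\mathbb F_n \subseteq \mathbb A_n \subseteq \mathbb G_n$ and $\mu(\mathbb G_n \setminus \mathbb F_n) < \epsilon\, 2^{-(n+1)}$. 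The set $\mathbb G = \bigcup_n \mathbb G_n$ is open, contains $\mathbb A$, and satisfies $\mu(\mathbb G \setminus \bigcup_n \mathbb F_n) \le \sum_n \mu(\mathbb G_n \setminus \mathbb F_n) < \epsilon/2$.

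The one genuine subtlety, and the step I expect to require the most care, is that $\bigcup_n \mathbb F_n$ need not be closed. To repair this I would truncate: since $\mu\big(\bigcup_{n=1}^m \mathbb F_n\big) \uparrow \mu\big(\bigcup_n \mathbb F_n\big)$ by continuity from below, I choose $m$ with $\mu\big(\bigcup_n \mathbb F_n \setminus \bigcup_{n=1}^m \mathbb F_n\big) < \epsilon/2$ and set $\mathbb F = \bigcup_{n=1}^m \mathbb F_n$, a finite union of closed sets and hence closed. Then $\mathbb F \subseteq \mathbb A \subseteq \mathbb G$ and, splitting $\mathbb G \setminus \mathbb F$ along $\bigcup_n \mathbb F_n$, one gets $\mu(\mathbb G \setminus \mathbb F) < \epsilon/2 + \epsilon/2 = \epsilon$, so $\mathbb A \in \mathcal R$. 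Thus $\mathcal R$ is a $\sigma$-algebra containing all closed sets, whence $\mathcal R = \mathfrak B$; discarding the auxiliary outer open approximation leaves precisely the closed-regularity identity $\mu(\mathbb A) = \sup\{\mu(\mathbb F) : \mathbb F \text{ closed}, \mathbb F \subseteq \mathbb A\}$ for every $\mathbb A \in \mathfrak B$. I note that only the finiteness of $\mu$ and the metric structure are used; completeness and separability of $(\mathbb X, d)$ play no role here, entering instead only for the tightness statement, Lemma \ref{lemma_tightness}.
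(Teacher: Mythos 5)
Your proof is correct: the good-sets class $\mathcal R$ is indeed a $\sigma$-algebra containing the closed sets (the base case via $\mathbb G_n=\{x : d(x,\mathbb F)<1/n\}$ and the truncation of $\bigcup_n \mathbb F_n$ to a finite, hence closed, union are exactly the two points that need care, and you handle both), and your closing remark that only finiteness of $\mu$ and the metric structure are used--completeness and separability entering only for tightness--is also accurate. The paper offers no proof of its own, merely citing Theorem 7.1.3 of Dudley; your argument is the standard proof of that cited theorem, so it supplies precisely what the paper leaves implicit.
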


 \begin{remark} \label{remark_outer_regularity}
   Let $\mu$ be a closed regular probability measure defined on $\left( \mathbb X, \mathfrak B\right)$ and let $\mathbb A$ be a measurable subset in $\mathfrak B$. For every positive constant $\epsilon$, there exists a closed set $\mathbb F$ for which $\mathbb F \subset \mathbb X \setminus \mathbb A$ and $\mu \left( \mathbb X \setminus \mathbb A \right) < \mu \left( \mathbb F \right) + \epsilon$. Let us define an open set $\mathbb O = \mathbb X \setminus \mathbb F$. We can see that $\mathbb O$ satisfies $\mathbb O \supset \mathbb A$ and $\mu \left( \mathbb O \right) < \mu \left( \mathbb A \right) + \epsilon$. Hence we conclude that
   \begin{align}
     \mu \left( \mathbb A \right) = \inf \left\{ \mu \left( \mathbb O \right) \,\big|\, \mathbb O \in \mathfrak B \text{ open}, \mathbb O \supset \mathbb A \right\}
   \end{align}
 \end{remark}

 \begin{mydef} [Convergence of Probability Measures] \label{def_probability_measures_convergence}
   Let $\left\{ \mu^{(i)} \right\}_{i \in \mathbb N}$ and $\mu$ be a sequence of probability measures and a probability measure defined on $\left(\mathbb X, \mathfrak B \right)$, respectively, and let $\mathcal C_b \left( \mathbb X \right)$ be the set of all bounded, continuous, real-valued functions on $\mathbb X$. The sequence is said to \textit{converge} to $\mu$ if it holds that 
   $$\lim_{i \to \infty} \int g \,\mathrm d\mu^{(i)} = \int g \,\mathrm d\mu$$
   for every $g$ in $\mathcal C_b \left( \mathbb X \right)$. We denote the convergence by $\mu^{(i)} \rightarrow \mu$.
 \end{mydef}

 \begin{mydef} \label{def_uniform_tightness}
   Let $\left\{ \mu^{(i)} \right\}_{i \in \mathbb N}$ be a sequence of probability measures defined on $\left(\mathbb X, \mathfrak B \right)$. The probability measures are said to be \textit{uniformly tight} if for every positive constant $\epsilon$, there exists a compact subset $\mathbb K$ of $\left( \mathbb X, \mathcal T \right)$ for which $\mu^{(i)} \left( \mathbb K \right) > 1 - \epsilon$ holds for all $i$ in $\mathbb N$.
 \end{mydef}

 A measurable subset $\mathbb A$ of $\mathbb X$ is said to be a \textit{$\mu$-continuity set} if its boundary set has the zero measure with respect to $\mu$, i.e., $\mu \left( \mathrm{bd} \left( \mathbb A\right) \right) = 0$. The following is the \textit{portmanteau theorem} (See Theorem 11.1.1 in \cite{dudley_9780511755347}).
 \begin{thm} \label{thm_portmanteau}
   For a sequence $\left\{ \mu^{(i)} \right\}_{i \in \mathbb N}$ of probability measures and a probability measure $\mu$ on $\left( \mathbb X, \mathfrak B \right)$, the following are equivalent:
   \begin{enumerate}
   \item $\mu^{(i)} \rightarrow \mu$
     
   \item $\limsup_{i \to \infty} \mu^{(i)} \left( \mathbb F \right) \leq \mu \left( \mathbb F \right)$ for any closed subset $\mathbb F$ of $\mathbb X$

   \item $\liminf_{i \to \infty} \mu^{(i)} \left( \mathbb O \right) \geq \mu \left( \mathbb O \right)$ for any open subset $\mathbb O$ of $\mathbb X$

   \item $\lim_{i \to \infty} \mu^{(i)} \left( \mathbb A \right) = \mu \left( \mathbb A \right)$ for any $\mu$-continuity subset $\mathbb A$ of $\mathbb X$.
   \end{enumerate}
 \end{thm}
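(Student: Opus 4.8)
The plan is to prove the four-way equivalence by closing the cycle of implications $(1) \Rightarrow (2) \Rightarrow (3) \Rightarrow (4) \Rightarrow (1)$ on the complete separable metric space $(\mathbb X, d)$. The single tool that drives the whole argument is that the indicator of a closed set can be approximated from above by bounded continuous functions: for a closed set $\mathbb F$ and $m \in \mathbb N$, the function $g_m(x) = \left( 1 - m \, d(x, \mathbb F) \right)^+$, with $d(x, \mathbb F) = \inf_{y \in \mathbb F} d(x,y)$, lies in $\mathcal C_b(\mathbb X)$, equals $1$ on $\mathbb F$, and decreases pointwise to $\mathbf 1_{\mathbb F}$ as $m \to \infty$ precisely because $\mathbb F$ is closed.

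First I would establish $(1) \Rightarrow (2)$. Since $\mathbf 1_{\mathbb F} \le g_m$, I have $\mu^{(i)}(\mathbb F) \le \int g_m \, \mathrm d\mu^{(i)}$; letting $i \to \infty$ using $(1)$ gives $\limsup_i \mu^{(i)}(\mathbb F) \le \int g_m \, \mathrm d\mu$, and then letting $m \to \infty$ by dominated convergence yields $\limsup_i \mu^{(i)}(\mathbb F) \le \mu(\mathbb F)$. The step $(2) \Rightarrow (3)$ is pure complementation: for open $\mathbb O$, the set $\mathbb F = \mathbb X \setminus \mathbb O$ is closed and $\mu^{(i)}(\mathbb O) = 1 - \mu^{(i)}(\mathbb F)$, so passing to $\liminf$ converts the closed-set bound into the open-set bound (and running the same computation backwards shows $(2)$ and $(3)$ are in fact equivalent).

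Next, $(3) \Rightarrow (4)$ follows by a squeeze. For a $\mu$-continuity set $\mathbb A$, I would apply the open-set bound to the interior $\mathbb A^\circ$ and the equivalent closed-set bound to the closure $\overline{\mathbb A}$ to obtain $\mu(\mathbb A^\circ) \le \liminf_i \mu^{(i)}(\mathbb A) \le \limsup_i \mu^{(i)}(\mathbb A) \le \mu(\overline{\mathbb A})$; since $\mu(\mathrm{bd}(\mathbb A)) = 0$ forces $\mu(\mathbb A^\circ) = \mu(\overline{\mathbb A}) = \mu(\mathbb A)$, all terms collapse to $\mu(\mathbb A)$. Finally, for $(4) \Rightarrow (1)$, I would take $g \in \mathcal C_b(\mathbb X)$, reduce by an affine change of variable to the case $0 \le g \le 1$, and use the layer-cake identity $\int g \, \mathrm d\nu = \int_0^1 \nu(\{ g > t \}) \, \mathrm dt$. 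The boundary of $\{ g > t \}$ is contained in the level set $\{ g = t \}$, and since these level sets are disjoint only countably many can carry positive $\mu$-mass, so $\{ g > t \}$ is a $\mu$-continuity set for Lebesgue-a.e. $t$; applying $(4)$ inside the $t$-integral and invoking bounded convergence then gives $\int g \, \mathrm d\mu^{(i)} \to \int g \, \mathrm d\mu$.

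I expect $(4) \Rightarrow (1)$ to be the main obstacle, since it is the only step that leaves the world of set inequalities: it requires the layer-cake representation together with the observation that a probability measure has at most countably many level sets of positive measure, so that $\mu$-continuity holds for almost every threshold $t$. The remaining implications are routine once the approximating family $g_m$ and the complementation trick are in place.
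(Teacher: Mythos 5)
Your proposal is correct, but there is nothing in the paper to compare it against: the paper does not prove this statement, it simply imports it as the portmanteau theorem with a citation to Theorem 11.1.1 of Dudley's book. Your cycle $(1)\Rightarrow(2)\Rightarrow(3)\Rightarrow(4)\Rightarrow(1)$ is the standard textbook argument, and each step is sound -- the approximation $g_m(x)=\left(1-m\,d(x,\mathbb F)\right)^+$ decreasing to $\mathbf 1_{\mathbb F}$ for closed $\mathbb F$, complementation for $(2)\Leftrightarrow(3)$, the interior/closure squeeze for the $\mu$-continuity sets, and the layer-cake identity combined with the observation that only countably many level sets $\{g=t\}$ can carry positive $\mu$-mass. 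Note also that your argument never uses completeness or separability of $(\mathbb X,d)$, so it is if anything slightly more general than the setting in which the paper states the result.
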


 \subsection{Preliminary Results}
 \begin{lemma} \label{lemma_metric_space}
   The metric space $\left( \mathbb R^2 \times [0, 2\pi), d \right)$ defined in Section \ref{section_formulation} is complete, separable, and proper.
 \end{lemma}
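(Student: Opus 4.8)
The plan is to realize $(\mathbb R^2 \times [0,2\pi), d)$ as an isometric copy of a subset of a Euclidean space, and then read off all three properties from the corresponding textbook facts for subsets of $\mathbb R^n$. First I would introduce the embedding
$$\Psi(p_1,p_2,\theta) = \begin{pmatrix} \cos\theta & -\sin\theta & p_1 \\ \sin\theta & \cos\theta & p_2 \\ 0 & 0 & 1 \end{pmatrix}$$
of $\mathbb R^2\times[0,2\pi)$ into $\mathbb R^{3\times3}\cong\mathbb R^9$ equipped with the Frobenius norm. By the very definition of $d$ in Section \ref{section_formulation}, $d(x,\hat x)=\|\Psi(x)-\Psi(\hat x)\|_F$, so $\Psi$ is distance-preserving; expanding the norm gives $d^2(x,\hat x)=(p_1-\hat p_1)^2+(p_2-\hat p_2)^2+4\bigl(1-\cos(\theta-\hat\theta)\bigr)$, which vanishes only when $x=\hat x$. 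Hence $\Psi$ is injective and thus an isometry onto its image $\mathbb I:=\Psi(\mathbb R^2\times[0,2\pi))$.

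The crux of the argument is to show that $\mathbb I$ is closed in $\mathbb R^9$. A matrix $M$ lies in $\mathbb I$ exactly when it satisfies the finitely many equalities $M_{31}=M_{32}=0$, $M_{33}=1$, $M_{11}=M_{22}$, $M_{12}=-M_{21}$, and $M_{11}^2+M_{21}^2=1$; the last equation pins $(M_{11},M_{21})=(\cos\theta,\sin\theta)$ to a unique $\theta\in[0,2\pi)$, while $M_{13},M_{23}$ are the free coordinates $p_1,p_2$. Each condition is the zero set of a continuous function, so $\mathbb I$ is closed. This is precisely where the metric does its work, and I expect it to be the only subtle point: although $[0,2\pi)$ is not closed in $\mathbb R$, under $d$ the orientation coordinate is wrapped onto the full unit circle $\{M_{11}^2+M_{21}^2=1\}$, which is compact and closed, so a sequence of angles approaching $2\pi$ converges (in $d$) to the angle $0$.

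With $\mathbb I$ closed, all three properties follow from standard facts about $\mathbb R^9$ and transfer back through the isometry $\Psi$. For completeness, $\mathbb R^9$ is complete and a closed subset of a complete space is complete, so $\mathbb I$, and hence $(\mathbb R^2\times[0,2\pi),d)$, is complete. For separability, every subset of the separable space $\mathbb R^9$ is separable, so $\mathbb I$ is separable. For properness, I would take any $d$-closed and $d$-bounded set $\mathbb B$: then $\Psi(\mathbb B)$ is bounded in $\mathbb R^9$ and closed in $\mathbb I$, hence closed in $\mathbb R^9$ since $\mathbb I$ is itself closed; by Heine--Borel $\Psi(\mathbb B)$ is compact, and compactness transfers back along the isometry, so $\mathbb B$ is compact.

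In short, the one step requiring care is the closedness of $\mathbb I$, which is the formal expression of the fact that under $d$ the orientation variable behaves like a point of the compact circle $S^1$ rather than of the half-open interval $[0,2\pi)\subset\mathbb R$; this is what rescues completeness and properness despite $[0,2\pi)$ being non-closed in the usual real topology. Everything else is routine bookkeeping with results on subsets of $\mathbb R^n$.
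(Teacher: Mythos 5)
Your proof is correct. Note that the paper itself states this lemma without proof, so there is no argument to compare against; your embedding $\Psi$ of $\mathbb R^2\times[0,2\pi)$ onto the closed subset $\mathbb I\subset\mathbb R^9$ cut out by the equations $M_{31}=M_{32}=0$, $M_{33}=1$, $M_{11}=M_{22}$, $M_{12}=-M_{21}$, $M_{11}^2+M_{21}^2=1$ is exactly the natural way to establish all three properties at once, and you correctly isolate the one genuinely delicate point --- that under $d$ the orientation coordinate lives on the compact circle rather than on the non-closed interval $[0,2\pi)\subset\mathbb R$, which is what makes completeness and properness go through. The computation $d^2(x,\hat x)=(p_1-\hat p_1)^2+(p_2-\hat p_2)^2+4\bigl(1-\cos(\theta-\hat\theta)\bigr)$ checks out, and the transfers of completeness, separability, and the Heine--Borel argument for properness along the isometry are all routine and valid.
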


 \begin{lemma} \label{lemma_optimal_policies}   
   Given estimates $\hat x_{k:N}$, let $\boldsymbol{\mathcal P}_{k:N}$ be non-degenerate policies that belong to $\boldsymbol{\mathfrak P} \left( \hat x_{k:N} \right)$. The following are true for all $j$ in $\{k, \cdots, N\}$:
   \begin{enumerate}
   \item $\mathbb P \left( \mathbf x_j \in \overline{\mathbb D}_j \,\Big|\, \mathbf R_k=0, \cdots, \mathbf R_j = 0 \right) = 1$
     
   \item $\mathbb P \left( \mathbf x_j \in \underline{\mathbb D}_j \,\Big|\, \mathbf R_k=0, \cdots, \mathbf R_j = 1 \right) = 0$
   \end{enumerate}
   subject to $\mathbf R_j = \boldsymbol{\mathcal P}_j \left( \mathbf x_j \right)$ for each $j$ in $\{k, \cdots, N\}$, where $\overline{\mathbb D}_j$ and $\underline{\mathbb D}_j$ are defined in \eqref{eq_best_decision_sets}.
 \end{lemma}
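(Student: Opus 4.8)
The plan is to translate the statement into a pointwise condition on the (randomized) policy and then exploit the equality furnished by Proposition~\ref{prop_best_response_mapping_P}. Following Appendix~\ref{remark_randomized_transmission_policies}, write $p_j(x) = \mathbb P\left( \boldsymbol{\mathcal P}_j(\mathbf x_j) = 0 \,\big|\, \mathbf x_j = x\right)$ for the conditional no-transmission probability, and set $g_j(x) = d^2\left(x, \hat x_j\right) + \mathbb E_{\mathbf x_{j+1}}\left[ J_{j+1}^\ast\left( \mathbf x_{j+1}, \hat x_{j+1:N}\right) \,\big|\, \mathbf x_j = x\right]$, so that by \eqref{eq_cost-to-go_02} we have $J_j^\ast(x, \hat x_{j:N}) = \min\left\{ g_j(x), c_j'\right\}$ and, by \eqref{eq_best_decision_sets}, $\overline{\mathbb D}_j = \left\{ g_j \le c_j'\right\}$ and $\underline{\mathbb D}_j = \left\{ g_j < c_j'\right\}$. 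Denoting by $\nu_j$ the distribution of $\mathbf x_j$ conditioned on $\mathbf R_k = 0, \cdots, \mathbf R_{j-1}=0$, both claims reduce to showing that $p_j = 0$ holds $\nu_j$-almost everywhere on the complement of $\overline{\mathbb D}_j$ (Claim~(1)) and $p_j = 1$ holds $\nu_j$-almost everywhere on $\underline{\mathbb D}_j$ (Claim~(2)); the conditional probabilities in the statement are then recovered by Bayes' rule, with non-degeneracy guaranteeing that $\mathbb P\left( \mathbf R_j = 0 \,\big|\, \mathbf R_k=0, \cdots, \mathbf R_{j-1}=0\right) > 0$ so that the conditioning event in Claim~(1) has positive probability.

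First I would record the Bellman lower bound $\mathbb E\left[ J_{j+1}\left( \mathbf x_{j+1}, \boldsymbol{\mathcal P}_{j+1:N}, \hat x_{j+1:N}\right) \,\big|\, \mathbf x_{j+1} = x\right] \ge J_{j+1}^\ast\left( x, \hat x_{j+1:N}\right)$, valid for every $x$ and every policy; this is proved by backward induction from $J_{N+1} = J_{N+1}^\ast = 0$ using the recursions defining $J_j$ and \eqref{eq_cost-to-go_02}, and it does not require the best-response hypothesis. Since the SPP dynamics are unaffected by $\mathbf R_j$, integrating this bound against the transition kernel gives $\mathbb E_{\mathbf x_{j+1}}\left[ J_{j+1} \,\big|\, \mathbf x_j = x, \mathbf R_j = 0\right] \ge \mathbb E_{\mathbf x_{j+1}}\left[ J_{j+1}^\ast \,\big|\, \mathbf x_j = x\right]$, whence the conditional cost incurred by the policy satisfies, for every $x$,
\begin{align*}
  \mathbb E\left[ J_j\left( \mathbf x_j, \boldsymbol{\mathcal P}_{j:N}, \hat x_{j:N}\right) \,\big|\, \mathbf x_j = x\right] &\ge p_j(x)\, g_j(x) + \left( 1 - p_j(x)\right) c_j' \ge \min\left\{ g_j(x), c_j'\right\} = J_j^\ast\left( x, \hat x_{j:N}\right),
\end{align*}
the last inequality because a convex combination of $g_j(x)$ and $c_j'$ is never smaller than their minimum.

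Next I would integrate this pointwise inequality against $\nu_j$ and invoke Proposition~\ref{prop_best_response_mapping_P}: since $\boldsymbol{\mathcal P}_{k:N} \in \boldsymbol{\mathfrak P}\left( \hat x_{k:N}\right)$, \eqref{eq_prop_best_response_mapping_P_01} forces the $\nu_j$-integral of the left-hand side to equal that of $J_j^\ast$. As the integrand dominates $J_j^\ast$ pointwise, equality of the integrals yields equality $\nu_j$-almost everywhere in the displayed chain. Equality in the second inequality is exactly the statement that the convex combination attains the minimum: this forces $p_j(x) = 1$ wherever $g_j(x) < c_j'$ and $p_j(x) = 0$ wherever $g_j(x) > c_j'$, i.e.\ $p_j = 1$ on $\underline{\mathbb D}_j$ and $p_j = 0$ on the complement of $\overline{\mathbb D}_j$, both $\nu_j$-almost everywhere. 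Translating back through Bayes' rule delivers Claims~(1) and~(2); on the boundary $\overline{\mathbb D}_j \setminus \underline{\mathbb D}_j$, where $g_j = c_j'$, the value of $p_j$ is immaterial, consistent with the two-sided inclusion $\underline{\mathbb D}_j \subseteq \mathbb D_j \subseteq \overline{\mathbb D}_j$ allowed in Corollary~\ref{cor_best_response_mapping_P}.

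The main obstacle I anticipate is the bookkeeping around the randomized policy and the conditional measures: one must justify the pointwise Bellman lower bound cleanly (so that $g_j$, rather than the policy-dependent continuation cost, appears on the right), handle the passage from an equality of $\nu_j$-integrals to an almost-everywhere pointwise equality, and dispose of the degenerate possibility $\mathbb P\left( \mathbf R_j = 1 \,\big|\, \mathbf R_k=0, \cdots, \mathbf R_{j-1}=0\right) = 0$ in Claim~(2), where the conditioning event is null and the assertion holds vacuously. Measurability of $p_j$, guaranteed by the standing restriction in Appendix~\ref{remark_randomized_transmission_policies}, is what makes the almost-everywhere arguments legitimate.
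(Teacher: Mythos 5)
Your proof is correct and follows exactly the route the paper intends: the paper's entire proof of this lemma is the single sentence ``The proof directly follows from Proposition~\ref{prop_best_response_mapping_P},'' and your argument is precisely the fleshed-out version of that claim --- pointwise domination of $J_j^\ast$ by the policy's cost-to-go, equality of the $\nu_j$-integrals forced by \eqref{eq_prop_best_response_mapping_P_01}, hence almost-everywhere equality in the convex-combination step, which pins down $p_j$ off the boundary set $\overline{\mathbb D}_j \setminus \underline{\mathbb D}_j$. No discrepancies to report.
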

The proof directly follows from Proposition \ref{prop_best_response_mapping_P}.

 Based on Lemma \ref{lemma_optimal_policies}, we can state the following proposition.
 \begin{prop} \label{prop_decision_set_containment}
   Given estimates $\hat x_{k:N}$, let $\boldsymbol{\mathcal P}_{k:N}$ be non-degenerate policies that belong to $\boldsymbol{\mathfrak P} \left( \hat x_{k:N} \right)$. Consider compact sets $\left\{ \mathbb K_j \right\}_{j=k}^N$ given by\footnote{Due to the properness of the metric space $\left( \mathbb R^2 \times [0, 2\pi), d \right)$ (see Lemma \ref{lemma_metric_space}), every closed ball is a compact set.}
   \begin{align} \label{eq_lemma_decision_set_containment}
     \mathbb K_j = \left\{ x \in \mathbb R^2 \times [0, 2\pi) \,\Big|\, d^2\left( x, \hat x_j \right) \leq c_j' \right\}
   \end{align}
   The following holds for all $j$ in $\{k, \cdots, N\}$:
   \begin{align}
     \mathbb P \left( \mathbf x_j \in \mathbb K_j \,\Big|\, \mathbf R_k=0, \cdots, \mathbf R_j=0 \right) = 1
   \end{align}
   subject to $\mathbf R_j = \boldsymbol{\mathcal P}_j \left( \mathbf x_j \right)$ for each $j$ in $\{k, \cdots, N\}$.
 \end{prop}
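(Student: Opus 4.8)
The plan is to reduce the claim to the first assertion of Lemma \ref{lemma_optimal_policies} by establishing the set inclusion $\overline{\mathbb D}_j \subseteq \mathbb K_j$ for every $j$ in $\{k, \cdots, N\}$, where $\overline{\mathbb D}_j$ is the set defined in \eqref{eq_best_decision_sets_upper_limit}. Once this inclusion is in hand, the conclusion follows immediately from monotonicity of the conditional probability measure.

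First I would record the nonnegativity of the cost-to-go functions $J_j^\ast$ defined in \eqref{eq_cost-to-go_02}. This is verified by backward induction on $j$: the terminal value satisfies $J_{N+1}^\ast = 0 \geq 0$, and if $J_{j+1}^\ast \geq 0$, then the conditional expectation $\mathbb E_{\mathbf x_{j+1}}\left[ J_{j+1}^\ast \left( \mathbf x_{j+1}, \hat x_{j+1:N} \right) \,\big|\, \mathbf x_j = x_j \right]$ is nonnegative, so the first argument of the minimum in \eqref{eq_cost-to-go_02} is nonnegative, while the second argument $c_j'$ is strictly positive by the hypothesis of Problem \ref{prob_two-player_stopping}; hence $J_j^\ast \geq 0$. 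In particular the conditional expectation appearing in \eqref{eq_best_decision_sets_upper_limit} is nonnegative.

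Next I would use this nonnegativity to drop the expectation term. For any $x_j$ in $\overline{\mathbb D}_j$, the defining inequality of \eqref{eq_best_decision_sets_upper_limit} together with nonnegativity of the expectation gives
$$d^2 \left( x_j, \hat x_j \right) \leq d^2 \left( x_j, \hat x_j \right) + \mathbb E_{\mathbf x_{j+1}} \left[ J_{j+1}^\ast \left( \mathbf x_{j+1}, \hat x_{j+1:N} \right) \,\big|\, \mathbf x_j = x_j \right] \leq c_j',$$
so that $x_j$ lies in $\mathbb K_j$ by the definition \eqref{eq_lemma_decision_set_containment}. This proves $\overline{\mathbb D}_j \subseteq \mathbb K_j$. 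Since the policies $\boldsymbol{\mathcal P}_{k:N}$ are non-degenerate and belong to $\boldsymbol{\mathfrak P} \left( \hat x_{k:N} \right)$, the first assertion of Lemma \ref{lemma_optimal_policies} yields $\mathbb P \left( \mathbf x_j \in \overline{\mathbb D}_j \,\big|\, \mathbf R_k=0, \cdots, \mathbf R_j=0 \right) = 1$, and monotonicity of the probability measure under the inclusion $\overline{\mathbb D}_j \subseteq \mathbb K_j$ forces $\mathbb P \left( \mathbf x_j \in \mathbb K_j \,\big|\, \mathbf R_k=0, \cdots, \mathbf R_j=0 \right) = 1$, as required.

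I do not anticipate a genuine obstacle here: the entire content of the proposition is the elementary containment $\overline{\mathbb D}_j \subseteq \mathbb K_j$, which isolates the distortion term from the cost-to-go. The only point deserving a line of care is the nonnegativity of $J_{j+1}^\ast$, which is exactly what licenses discarding the expectation term; everything else is bookkeeping that invokes Lemma \ref{lemma_optimal_policies} and the properness of $\left( \mathbb R^2 \times [0, 2\pi), d \right)$ (Lemma \ref{lemma_metric_space}) guaranteeing that each $\mathbb K_j$ is indeed compact.
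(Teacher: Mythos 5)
Your proof is correct and follows exactly the paper's argument: the paper likewise deduces the result from the containment $\overline{\mathbb D}_j \subseteq \mathbb K_j$ together with Lemma \ref{lemma_optimal_policies}. You have merely made explicit the nonnegativity of $J_{j+1}^\ast$ that justifies the containment, which the paper leaves implicit.
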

 The proof follows from the fact that  $\mathbb K_j$ contains the set $\overline{\mathbb D}_j$ defined in \eqref{eq_best_decision_sets_upper_limit} and Lemma \ref{lemma_optimal_policies}.

 \begin{prop} \label{prop_time_evolution}
   Given policies $\boldsymbol{\mathcal P}_{k:N}$, for each $j$ in $\{k, \cdots, N\}$, let us define
   \begin{subequations} \label{eq_probability_measures}
     \begin{align}
       \mu_{j|j} \left( \mathbb A \right) &= \mathbb P \left( \mathbf x_j \in \mathbb A \,\Big|\, \mathbf R_k=0, \cdots, \mathbf R_j=0 \right) \\
       \mu_{j|j-1} \left( \mathbb A \right) &= \mathbb P \left( \mathbf x_j \in \mathbb A \,\Big|\, \mathbf R_k=0, \cdots, \mathbf R_{j-1}=0 \right)
     \end{align}
   \end{subequations}
   subject to $\mathbf R_j = \boldsymbol{\mathcal P}_j \left( \mathbf x_j \right)$ for each $j$ in $\{k, \cdots, N\}$, where $\mathbb A$ is a Borel-measurable subset. The probability measures \eqref{eq_probability_measures} evolve according to the following update rules:
   \begin{enumerate}
   \item Policy update rule: 
     $$\mu_{j|j} \left( \mathbb A \right) = \frac{\int_{\mathbb A} \mathbb P \left( \boldsymbol{\mathcal P}_j \left( \mathbf x_j \right) = 0 \,\Big|\, \mathbf x_j=x \right) \,\mathrm d\mu_{j|j-1}}{\mathbb P \left( \mathbf R_j=0 \,\Big|\, \mathbf R_k=0, \cdots, \mathbf R_{j-1}=0 \right)}$$
     provided that $\mathbb P \left( \mathbf R_j=0 \,\Big|\, \mathbf R_k=0, \cdots, \mathbf R_{j-1}=0 \right)$ is positive.

   \item Process update rule:
     $$\mu_{j|j-1} \left( \mathbb A \right) = \int_{\mathbb R^2 \times [0, 2\pi)} \mathbb P \left( \mathbf x_j \in \mathbb A \,\Big|\, \mathbf x_{j-1} = x \right) \,\mathrm d\mu_{j-1|j-1}$$

   \end{enumerate}
 \end{prop}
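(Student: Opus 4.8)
The plan is to recognize the two displayed identities as the prediction (time-update) and Bayesian measurement-update steps of a nonlinear filter, and to establish each by the law of total probability together with the conditional-independence structure induced by (a) the Markov property of the SPP model \eqref{eq_spp_model} and (b) the fact that each decision variable $\mathbf R_i = \boldsymbol{\mathcal P}_i(\mathbf x_i)$ is a (possibly randomized) function of the single state $\mathbf x_i$, whose internal randomization is drawn independently of the state noise and of the other policies' randomizations (see Appendix \ref{remark_randomized_transmission_policies}).

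For the process update rule, I would begin from the definition of $\mu_{j|j-1}$ and disintegrate over $\mathbf x_{j-1}$ against $\mu_{j-1|j-1}$, which is exactly the conditional law of $\mathbf x_{j-1}$ given $\mathbf R_k=0, \cdots, \mathbf R_{j-1}=0$ because the two share the same conditioning event. This yields
\[
  \mu_{j|j-1}(\mathbb A) = \int \mathbb P\left(\mathbf x_j \in \mathbb A \,\Big|\, \mathbf x_{j-1} = x, \mathbf R_k=0, \cdots, \mathbf R_{j-1}=0\right) \, \mathrm d\mu_{j-1|j-1}.
\]
The key step is to discard the conditioning on $\mathbf R_k=0, \cdots, \mathbf R_{j-1}=0$ inside the integrand. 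Since $\mathbf x_j$ is produced from $\mathbf x_{j-1}$ through the fresh noise $\mathbf v_j, \boldsymbol{\phi}_j$, which is independent of $(\mathbf x_k, \cdots, \mathbf x_{j-1})$ and of the randomizations in $\boldsymbol{\mathcal P}_k, \cdots, \boldsymbol{\mathcal P}_{j-1}$, and since each event $\{\mathbf R_i=0\}$ with $i \leq j-1$ is measurable with respect to these past states and randomizations, the Markov property gives $\mathbb P(\mathbf x_j \in \mathbb A \mid \mathbf x_{j-1}=x, \mathbf R_k=0, \cdots, \mathbf R_{j-1}=0) = \mathbb P(\mathbf x_j \in \mathbb A \mid \mathbf x_{j-1}=x)$, which is the claimed formula.

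For the policy update rule, I would invoke the definition of conditional probability (legitimate because the denominator $\mathbb P(\mathbf R_j=0 \mid \mathbf R_k=0, \cdots, \mathbf R_{j-1}=0)$ is assumed positive) to write $\mu_{j|j}(\mathbb A)$ as the ratio of $\mathbb P(\mathbf x_j \in \mathbb A, \mathbf R_j=0 \mid \mathbf R_k=0, \cdots, \mathbf R_{j-1}=0)$ to that denominator. Disintegrating the numerator over $\mathbf x_j$ against $\mu_{j|j-1}$ and using that $\mathbf R_j = \boldsymbol{\mathcal P}_j(\mathbf x_j)$ is a function of $\mathbf x_j$ and independent randomization, I would reduce $\mathbb P(\mathbf R_j=0 \mid \mathbf x_j=x, \mathbf R_k=0, \cdots, \mathbf R_{j-1}=0)$ to $\mathbb P(\boldsymbol{\mathcal P}_j(\mathbf x_j)=0 \mid \mathbf x_j=x)$, which collapses the numerator to $\int_{\mathbb A} \mathbb P(\boldsymbol{\mathcal P}_j(\mathbf x_j)=0 \mid \mathbf x_j=x) \, \mathrm d\mu_{j|j-1}$, exactly as stated.

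The main obstacle I anticipate is the rigorous justification of the two conditional-independence reductions: that conditioning on the past non-transmission events $\{\mathbf R_i=0\}_{i<j}$ contributes nothing beyond $\mathbf x_{j-1}$ in the prediction step, and that the randomization of $\boldsymbol{\mathcal P}_j$ is independent of the past given $\mathbf x_j$ in the update step. Both hinge on fixing, once and for all, a probability model in which each policy's internal randomization is independent of the state noise and of every other policy's randomization, as sketched in Appendix \ref{remark_randomized_transmission_policies}. I would therefore devote most of the effort to making this independence structure explicit and verifying the measurability of each event $\{\mathbf R_i=0\}$ with respect to $\sigma(\mathbf x_i, \text{randomization}_i)$; once these are in place, both formulas follow from routine applications of the law of total probability and the Markov property of \eqref{eq_spp_model}.
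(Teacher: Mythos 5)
Your argument is correct. The paper states Proposition \ref{prop_time_evolution} without any proof, treating it as a standard filtering recursion, and your derivation — the law of total probability for each step, the Markov property of \eqref{eq_spp_model} to drop the conditioning on the past non-transmission events in the prediction step, and the independence of each policy's internal randomization (together with the measurability of $x \mapsto \mathbb P\left( \boldsymbol{\mathcal P}_j(\mathbf x_j)=0 \,\big|\, \mathbf x_j = x\right)$ stipulated in Appendix \ref{remark_randomized_transmission_policies}) in the update step — is exactly the justification the authors implicitly rely on.
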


 \subsection{Proof of Proposition \ref{prop_mapping_G_j_continuity}} \label{proof_prop_mapping_G_j_continuity}
 To start with, we note that for each $j$ in $\{k, \cdots, N\}$, the function $\mathcal G_j$ can be written as follows:
 \begin{align} \label{eq_proof_prop_mapping_G_j_continuity_01}
   \mathcal G_j \left( x_{j-1}, \hat x_{j:N} \right) = \mathbb E_{\mathbf x_j} \left[ g_j \left( \mathbf x_j, \hat x_{j:N} \right) \,\big|\, \mathbf x_{j-1} = x_{j-1} \right]
 \end{align}
 with $\mathcal G_{N+1}=0$, where $g_j(x_j, \hat x_{j:N}) = \min \left\{ d^2 \left( x_j, \hat x_j \right) + \mathcal G_{j+1} \left( x_j, \hat x_{j+1:N} \right), c_j' \right\}$.

 We prove the statement using mathematical induction starting from $j = N+1$. Since $\mathcal G_{N+1}$ is constant, e.g., $\mathcal G_{N+1} = 0$, it is a continuous function. Now suppose that $\mathcal{G}_{j+1}$ is a continuous function. Note that $g_j$ in \eqref{eq_proof_prop_mapping_G_j_continuity_01} is a continuous function. To verify the continuity of $\mathcal G_j$, let $\left\{ x_{j-1}^{(i)} \right\}_{i \in \mathbb{N}}$ and $\left\{ \hat x_{j:N}^{(i)} \right\}_{i \in \mathbb N}$ be sequences that converge to $x_{j-1}$ and $\hat x_{j:N}$, respectively. For each set $\mathbb A$ in $\mathfrak B$, let us define
 \begin{align}
   \mu_j^{(i)} \left( \mathbb A \right) &= \mathbb P \left( \mathbf x_j \in \mathbb A \,\Big|\, \mathbf x_{j-1} = x_{j-1}^{(i)} \right) \\
   \mu_j \left( \mathbb A \right) &= \mathbb P \left( \mathbf x_j \in \mathbb A \,\Big|\, \mathbf x_{j-1} = x_{j-1} \right)
 \end{align}
 By Assumption \ref{assump_transition_probability} and Theorem \ref{thm_portmanteau}, we can see that $\left\{ \mu_j^{(i) }\right\}_{i \in \mathbb N}$ converges to $\mu_j$. 

Since $\left( \mathbb R^2 \times [0, 2\pi), d \right)$ is a complete, separable metric space by Lemma \ref{lemma_metric_space}, using the Skorokhod representation theorem \cite{skorokhod1956_tpa}, we can see that there is a sequence of random variables $\left\{ \mathbf y_j^{(i)} \right\}_{i \in \mathbb{N}}$ and a random variable $\mathbf y_j$ all defined on a common probability space $\left( \Omega, \mathfrak{F}, \nu \right)$ in which the following three facts are true:
 \begin{enumerate} [label=\bfseries (F\arabic*)]
 \item $\mu_j^{(i)}$ is the probability measure of $\mathbf y_j^{(i)}$, i.e., $\nu \left( \left\{ \omega \in \Omega \,\Big|\, \mathbf y_j^{(i)}(\omega) \in \mathbb A \right\} \right) = \mu_j^{(i)} \left( \mathbb A\right)$ for each $\mathbb A$ in $\mathfrak B$. \label{enum_01}

 \item $\mu_j$ is the probability measure of $\mathbf y_j$, i.e., $\nu \left( \left\{ \omega \in \Omega \,\Big|\, \mathbf y_j(\omega) \in \mathbb A \right\} \right) = \mu_j \left( \mathbb A\right)$ for each $\mathbb A$ in $\mathfrak B$. \label{enum_02}

 \item $\left\{ \mathbf y_j^{(i)} \right\}_{i \in \mathbb{N}}$ converges to $\mathbf y_j$ almost surely. \label{enum_03}
 \end{enumerate}

 From \ref{enum_01} and \ref{enum_02}, we can derive
 \begin{align}
   &\mathcal G_j \left( x_{j-1}^{(i)}, \hat x_{j:N}^{(i)} \right) - \mathcal G_j \left( x_{j-1}, \hat x_{j:N} \right) \nonumber \\
   &= \mathbb E_{\mathbf x_j} \left[ g_j \left( \mathbf x_j, \hat x_{j:N}^{(i)} \right) \,\Big|\, \mathbf x_{j-1} = x_{j-1}^{(i)} \right] - \mathbb E_{\mathbf x_j} \left[ g_j \left( \mathbf x_j, \hat x_{j:N} \right) \,\Big|\, \mathbf x_{j-1} = x_{j-1} \right] \nonumber \\
   &= \int_{\Omega} g_j \left( \mathbf y_j^{(i)}(\omega), \hat x_{j:N}^{(i)} \right) \mathrm d\nu - \int_{\Omega} g_j \left( \mathbf y_j(\omega), \hat x_{j:N} \right) \mathrm d\nu
 \end{align}

 Notice that by the fact that $c_j'$ is a fixed constant and $g$ is a non-negative function, it holds that $${0 \leq g_j \left( \mathbf y_j^{(i)}(\omega), \hat x_{j:N}^{(i)} \right) \leq c_j'}$$ for every $i$ in $\mathbb N$ and every $\omega \in \Omega$. Hence, the sequence of functions $\left\{ g_j \left( \mathbf y_j^{(i)}(\cdot), \hat x_{j:N}^{(i)} \right) \right\}_{i \in \mathbb N}$ is uniformly bounded. Also, by the continuity of $g_j$ and \ref{enum_03}, it holds that
 \begin{align*}
   \lim_{i \to \infty} g_j \left( \mathbf y_j^{(i)}(\omega), \hat x_{j:N}^{(i)} \right) = g_j \left( \mathbf y_j(\omega), \hat x_{j:N} \right)
 \end{align*}
 for almost every $\omega$ in $\Omega$. Using the bounded convergence theorem (see Theorem 16.5 in \cite{billingsley1995_wiley}), we have that 
 \begin{align*}
   &\lim_{i \to \infty} \left| \mathcal G_j \left( x_{j-1}^{(i)}, \hat x_{j:N}^{(i)} \right) - \mathcal G_j \left( x_{j-1}, \hat x_{j:N} \right) \right| \nonumber \\
   &=\lim_{i \to \infty} \left| \int_{\Omega} g_j \left( \mathbf y_j^{(i)}(\omega), \hat x_{j:N}^{(i)} \right) \mathrm d\nu - \int_{\Omega} g_j \left( \mathbf y_j(\omega), \hat x_{j:N} \right) \mathrm d\nu\right| = 0
 \end{align*}
 which proves that the function $\mathcal G_j$ is continuous.
 
 Finally, by induction, we conclude that the functions $\left\{ \mathcal G_j \right\}_{j=k}^N$ are all continuous. \hfill \QED

 \subsection{Proofs of Proposition \ref{prop_optimal_solution_non-degeneracy} and Lemma \ref{lemma_optimal_solution_containment}} \label{proof_section_2}
 \begin{lemma} \label{lemma_non-degenerate_policy}
   For each $j$ in $\{k, \cdots, N\}$, there exist estimates $\hat x_{j:N}$ for which the set given by
   \begin{align}
     \underline{\mathbb D}_{j} = \left\{ x_{j} \in \mathbb R^2 \times [0, 2\pi) \,\Big|\, d^2 \left(x_{j}, \hat x_{j} \right) + \mathbb E_{\mathbf x_{j+1}} \left[ J_{j+1}^\ast \left( \mathbf x_{j+1}, \hat x_{j+1:N} \right) \,\Big|\, \mathbf x_{j} = x_{j} \right] < c_{j}' \right\}
   \end{align}
   is non-empty, where $J_{j+1}^\ast$ is defined in \eqref{eq_cost-to-go_02}.
 \end{lemma}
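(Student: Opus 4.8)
The plan is to avoid reasoning about the geometry of $\underline{\mathbb D}_j$ directly and instead to exploit the way the stopping cost $c_j'$ is assembled in \eqref{eq_stopping_cost}. This is essential rather than cosmetic: for a \emph{generic} positive constant $c_j'$ the set $\underline{\mathbb D}_j$ can be empty, because the conditional law of $\mathbf x_{j+1}$ given $\mathbf x_j = x_j$ is non-degenerate, so the continuation cost $\mathbb E_{\mathbf x_{j+1}}[J_{j+1}^\ast(\mathbf x_{j+1}, \hat x_{j+1:N}) \mid \mathbf x_j = x_j]$ is bounded below by a fixed positive constant that no choice of estimates can beat. What rescues us is that the relevant constants satisfy $c_j' = c_j + \mathcal J_{j+1}\!\left( 0, \boldsymbol{\mathcal T}_{j+1:N}^{<j>}, \mathcal E_{j+1:N}^{<j>} \right)$ with $c_j > 0$, and it is precisely this strictly positive slack $c_j$ that I will spend.

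First I would rewrite $\mathcal J_{j+1}\!\left( 0, \boldsymbol{\mathcal T}_{j+1:N}^{<j>}, \mathcal E_{j+1:N}^{<j>} \right)$, using \eqref{eq_cost_functional_two-player_stopping_02} together with Definition \ref{def_solutions}, as $\mathbb E_{\mathbf x_{j+1}}\!\left[ J_{j+1}\!\left( \mathbf x_{j+1}, \boldsymbol{\mathcal P}_{j+1:N}, \hat x_{j+1:N} \right) \mid \mathbf x_j = 0 \right]$, where $\boldsymbol{\mathcal P}_{j+1:N}$ and $\hat x_{j+1:N}$ are the policy and estimates of the Sub-problem $(j+1)$ solution that was used to form $c_j'$. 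By the minimizing characterization behind $J_{j+1}^\ast$ (Proposition \ref{prop_best_response_mapping_P}), this quantity is no smaller than $\mathbb E_{\mathbf x_{j+1}}\!\left[ J_{j+1}^\ast\!\left( \mathbf x_{j+1}, \hat x_{j+1:N} \right) \mid \mathbf x_j = 0 \right]$. Combining the two gives $\mathbb E_{\mathbf x_{j+1}}\!\left[ J_{j+1}^\ast\!\left( \mathbf x_{j+1}, \hat x_{j+1:N} \right) \mid \mathbf x_j = 0 \right] \le c_j' - c_j$.

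Next I would exhibit the required estimates: take $\hat x_{j+1:N}$ exactly as above and set $\hat x_j = 0$. Testing membership of the point $x_j = 0$ in $\underline{\mathbb D}_j$, the defining expression becomes $d^2(0,0) + \mathbb E_{\mathbf x_{j+1}}\!\left[ J_{j+1}^\ast\!\left( \mathbf x_{j+1}, \hat x_{j+1:N} \right) \mid \mathbf x_j = 0 \right] \le 0 + (c_j' - c_j) < c_j'$, where the final strict inequality uses $c_j > 0$. Hence $0 \in \underline{\mathbb D}_j$, so the set is non-empty. The terminal case $j = N$ is the same computation with $J_{N+1}^\ast = 0$ and $c_N' = c_N$, for which the continuation term vanishes and membership of $0$ reduces to $0 < c_N$.

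The main obstacle is conceptual rather than computational: one must recognize that non-emptiness does not follow from the metric $d$ alone but genuinely requires the calibration $c_j' = c_j + \mathcal J_{j+1}(0,\cdot)$, and then one must align the estimates $\hat x_{j+1:N}$ appearing in the lemma with the very estimates that defined $c_j'$, so that evaluating the continuation cost at the matched point $x_j = \hat x_j = 0$ reproduces $\mathcal J_{j+1}(0,\cdot)$. The one technical point to check carefully is the inequality $\mathbb E[J_{j+1}] \ge \mathbb E[J_{j+1}^\ast]$, namely that the continuation cost entering \eqref{eq_stopping_cost} is at least the optimal cost-to-go built from the same estimates; this is supplied by Proposition \ref{prop_best_response_mapping_P}.
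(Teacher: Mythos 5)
Your proof is correct and follows essentially the same route as the paper's: both exploit the calibration $c_j' = c_j + \mathcal J_{j+1}(0,\cdot)$ from \eqref{eq_stopping_cost}, match $\hat x_{j+1:N}$ to the estimates that define $c_j'$, and evaluate the defining expression at a point where the distortion term vanishes, so that the strictly positive slack $c_j$ delivers the strict inequality. The only differences are minor: the paper verifies membership of an arbitrary point $x_j^o$ by invoking the invariance argument of Remark \ref{remark_solution_transformation}, whereas you specialize to $x_j^o = 0$ (which suffices for the stated existence claim), and you replace the paper's implicit equality $\mathcal J_{j+1}(0,\cdot) = \mathbb E_{\mathbf x_{j+1}}\left[ J_{j+1}^\ast\left( \mathbf x_{j+1}, \hat x_{j+1:N} \right) \mid \mathbf x_j = 0 \right]$ with the safer one-sided bound $\mathbb E_{\mathbf x_{j+1}}\left[ J_{j+1}^\ast \right] \leq \mathcal J_{j+1}(0,\cdot)$ supplied by Proposition \ref{prop_best_response_mapping_P}, which avoids relying on the sub-problem solution being an exact best response.
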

 \begin{proof}
   Recall how $c_{j}'$ is determined by \eqref{eq_stopping_cost} with the solutions $\boldsymbol{\mathcal T}_{j+1:N}^{<j>}$ and $\mathcal E_{j+1:N}^{<j>}$ to Sub-problem $j+1$. Let us select a fixed point $x_{j}^o$ in $\mathbb R^2 \times [0, 2\pi)$. Under the choice of $\hat x_{j} = x_{j}^o$ and $\hat x_l = \mathcal E_l^{<j>} (x_{j})$ for each $l$ in $\{j+1, \cdots, N\}$, by a similar argument as in Remark \ref{remark_solution_transformation}, we can see that 
   \begin{align*}
     \mathbb E_{\mathbf x_{j+1}} \left[ J_{j+1}^\ast \left( \mathbf x_{j+1}, \hat x_{j+1:N} \right) \,\Big|\, \mathbf x_{j} = x_{j} \right] = \mathbb E_{\mathbf x_{j+1}} \left[ J_{j+1}^\ast \left( \mathbf x_{j+1}, \hat x_{j+1:N}' \right) \,\Big|\, \mathbf x_{j} = 0 \right]
   \end{align*}
   where $\hat x_l' = \mathcal E_l^{<j>} (0)$ for each $l$ in $\{j+1, \cdots, N\}$. Hence, at $x_{j} = x_j^o$, it holds that
   \begin{align*}
     &d^2 \left( x_{j}, \hat x_{j} \right) + \mathbb E_{\mathbf x_{j+1}} \left[ J_{j+1}^\ast \left( \mathbf x_{j+1}, \hat x_{j+1:N} \right) \,\Big|\, \mathbf x_{j} = x_{j} \right]\\
     &= \mathbb E_{\mathbf x_{j+1}} \left[ J_{j+1}^\ast \left( \mathbf x_{j+1}, \hat x_{j+1:N} \right) \,\Big|\, \mathbf x_{j} = x_{j} \right] \\
     &< c_{j} + \mathbb E_{\mathbf x_{j+1}} \left[ J_{j+1}^\ast \left( \mathbf x_{j+1}, \hat x_{j+1:N} \right) \,\Big|\, \mathbf x_{j} = x_{j} \right] \\
     &= c_{j} + \mathbb E_{\mathbf x_{j+1}} \left[ J_{j+1}^\ast \left( \mathbf x_{j+1}, \hat x_{j+1:N}' \right) \,\Big|\, \mathbf x_{j} = 0 \right] = c_{j}'
   \end{align*}
   This proves the Lemma.
 \end{proof}

 \begin{lemma} \label{lemma_mapping_continuity}
   Given estimates $\hat x_{k:N}$, the sets $\overline{\mathbb D}_j$ and $\underline{\mathbb D}_j$ defined in \eqref{eq_best_decision_sets} are closed and open, respectively, for all $j$ in $\{k, \cdots, N\}$.
 \end{lemma}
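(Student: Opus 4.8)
The plan is to reduce both claims to the continuity of a single real-valued function and then invoke Proposition \ref{prop_mapping_G_j_continuity}. For fixed estimates $\hat x_{k:N}$ and a fixed index $j$ in $\{k, \cdots, N\}$, define
$$h_j \left( x_j \right) = d^2 \left( x_j, \hat x_j \right) + \mathbb E_{\mathbf x_{j+1}} \left[ J_{j+1}^\ast \left( \mathbf x_{j+1}, \hat x_{j+1:N} \right) \,\Big|\, \mathbf x_j = x_j \right].$$
Comparing \eqref{eq_best_decision_sets} with this definition, we have $\overline{\mathbb D}_j = \left\{ x_j \,\big|\, h_j(x_j) \leq c_j' \right\}$ and $\underline{\mathbb D}_j = \left\{ x_j \,\big|\, h_j(x_j) < c_j' \right\}$. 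Since $(-\infty, c_j']$ is closed and $(-\infty, c_j')$ is open in $\mathbb R$, it suffices to show that $h_j$ is continuous on $\mathbb R^2 \times [0, 2\pi)$; closedness of $\overline{\mathbb D}_j$ and openness of $\underline{\mathbb D}_j$ then follow at once from the topological fact that the preimage of a closed (resp. open) set under a continuous map is closed (resp. open).

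The key step is to recognize the expectation term as a function already analyzed. Referring to the definition \eqref{eq_mapping_G_j}, the second summand of $h_j$ is precisely $\mathcal G_{j+1} \left( x_j, \hat x_{j+1:N} \right)$, so that $h_j \left( x_j \right) = d^2 \left( x_j, \hat x_j \right) + \mathcal G_{j+1} \left( x_j, \hat x_{j+1:N} \right)$. Proposition \ref{prop_mapping_G_j_continuity} guarantees that each $\mathcal G_{j+1}$ is continuous on its product domain, with the terminal convention $\mathcal G_{N+1} \equiv 0$ covering the case $j = N$; hence, holding $\hat x_{j+1:N}$ fixed, the map $x_j \mapsto \mathcal G_{j+1} \left( x_j, \hat x_{j+1:N} \right)$ is continuous. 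The first summand $x_j \mapsto d^2 \left( x_j, \hat x_j \right)$ is continuous because $d$ is a metric. Therefore $h_j$, being a finite sum of continuous functions, is continuous.

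Combining these observations completes the argument for every $j$ in $\{k, \cdots, N\}$: $\overline{\mathbb D}_j$ is the preimage under the continuous map $h_j$ of the closed ray $(-\infty, c_j']$ and is thus closed, while $\underline{\mathbb D}_j$ is the preimage of the open ray $(-\infty, c_j')$ and is thus open. I do not expect a genuine obstacle here, since the substance of the continuity claim is discharged entirely by Proposition \ref{prop_mapping_G_j_continuity}; the only points requiring care are the correct identification of the conditional expectation with $\mathcal G_{j+1}$ evaluated at $\left( x_j, \hat x_{j+1:N} \right)$ and the proper handling of the terminal index $j = N$ through the convention $\mathcal G_{N+1} \equiv 0$.
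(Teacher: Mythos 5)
Your proof is correct and follows essentially the same route as the paper, which simply cites the continuity of the functions $\mathcal G_j$ from Proposition \ref{prop_mapping_G_j_continuity}; you have merely spelled out the details (identifying the conditional expectation with $\mathcal G_{j+1}\left(x_j,\hat x_{j+1:N}\right)$, adding the continuity of $x_j \mapsto d^2\left(x_j,\hat x_j\right)$, and taking preimages of the closed and open rays). The handling of the terminal index $j=N$ via $\mathcal G_{N+1}\equiv 0$ is also consistent with the paper's conventions.
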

 The proof directly follows from the continuity of the functions $\left\{ \mathcal G_j \right\}_{j=k}^N$, each defined in \eqref{eq_mapping_G_j} (See Proposition~\ref{prop_mapping_G_j_continuity}).

 \textit{Proof of Proposition \ref{prop_optimal_solution_non-degeneracy}:}
 By contradiction, suppose that the degenerate policies $\boldsymbol{\mathcal P}_{k:N}^\ast$ and estimates $\hat x_{k:N}^\ast$ are jointly optimal for \eqref{eq_cost_functional_two-player_stopping_02}. Let $j_0 \in \{k, \cdots, N\}$ be the smallest integer for which 
\begin{equation} \label{eq_prop_optimal_solution_non-degeneracy_01}
 \mathbb P \left( \mathbf R_{j_0}^\ast=0 \,\Big|\, \mathbf R_k^\ast=0, \cdots, \mathbf R_{j_0-1}^\ast=0 \right) = 0
\end{equation}
holds subject to $\mathbf R_j^\ast = \boldsymbol{\mathcal P}_j^\ast\left( \mathbf x_j \right)$ for each $j$ in $\{k, \cdots, N\}$. Since $j_0$ is the smallest such integer, by Proposition~\ref{prop_time_evolution}, the probability measure $\mu_{j_0|j_0-1}$ of $\mathbf x_{j_0}$ is well-defined.

Using Lemma \ref{lemma_non-degenerate_policy}, let us choose  $\hat x_{j_0:N}^{\circ} \in \left( \mathbb R^2 \times [0, 2\pi) \right)^{N-j_0+1}$ for which the set given by 
\begin{align}
 \underline{\mathbb D}_{j_0}^\circ = \left\{ x_{j_0} \in \mathbb R^2 \times [0, 2\pi) \,\Big|\, d^2 \left(x_{j_0}, \hat x_{j_0}^\circ \right) + \mathbb E_{\mathbf x_{j_0+1}} \left[ J_{j_0+1}^\ast \left( \mathbf x_{j_0+1}, \hat x_{j_0+1:N}^\circ \right) \,\Big|\, \mathbf x_{j_0} = x_{j_0} \right] < c_{j_0}' \right\}
\end{align}
is non-empty. Note that according to Lemma \ref{lemma_mapping_continuity}, the set $\underline{\mathbb D}_{j_0}^\circ$ is open; hence, from Assumption~\ref{assump_transition_probability} and Proposition~\ref{prop_time_evolution}, we have that
\begin{align} \label{eq_prop_optimal_solution_non-degeneracy_03}
 \mathbb P \left( \mathbf x_{j_0} \in \underline{\mathbb D}_{j_0}^\circ \,\Big|\, \mathbf R_k^\ast=0, \cdots, \mathbf R_{j_0-1}^\ast=0 \right) > 0
\end{align}

For each $j$ in $\{j_0, \cdots, N\}$, let us define a function $\mathcal P_{j}^\circ: \mathbb R^2 \times [0, 2\pi) \to \{0, 1\}$ as follows:
\begin{align*}
 \mathcal P_j^\circ(x_j) = \begin{cases} 0 & \text{if } x_j \in \underline{\mathbb D}_{j}^\circ \\ 1 & \text{otherwise} \end{cases}
\end{align*}
where
\begin{align*}
 \underline{\mathbb D}_{j}^\circ = \left\{ x_{j} \in \mathbb R^2 \times [0, 2\pi) \,\Big|\, d^2 \left(x_{j}, \hat x_{j}^\circ \right) + \mathbb E_{\mathbf x_{j+1}} \left[ J_{j+1}^\ast \left( \mathbf x_{j+1}, \hat x_{j+1:N}^\circ \right) \,\Big|\, \mathbf x_{j} = x_{j} \right] < c_{j}' \right\}
\end{align*}
Let us select new policies $\boldsymbol{\mathcal P}_{k:N}'$ and estimates $\hat x_{k:N}'$ as follows: for each $j$ in $\{k, \cdots, N\}$,
\begin{subequations} \label{eq_new_policies_estimates}
  \begin{align}
    \boldsymbol{\mathcal P}_j' &= \begin{cases} \boldsymbol{\mathcal P}_j^\ast & \text{if } j \in \{k, \cdots, j_0-1\} \\ \mathcal P_j^\circ & \text{if } j \in \{j_0, \cdots, N\} \end{cases} \label{eq_new_policies}\\
    \hat x_j' &= \begin{cases} \hat x_j^\ast & \text{if } j \in \{k, \cdots, j_0-1\} \\ \hat x_j^\circ & \text{if } j \in \{j_0, \cdots, N\} \end{cases} \label{eq_new_estimates}
  \end{align}
\end{subequations}

By definition, under the new policies $\boldsymbol{\mathcal P}_{k:N}'$, it holds that
\begin{align}
  \mathbb P \left( \mathbf x_{j_0} \in \underline{\mathbb D}_{j_0}^\circ \,\Big|\, \mathbf R_k'=0, \cdots, \mathbf R_{j_0}'=1 \right) = 0
\end{align}
subject to $\mathbf R_j' = \boldsymbol{\mathcal P}_j' \left( \mathbf x_j \right)$ for each $j$ in $\{k, \cdots, N\}$. This implies that
\begin{align} \label{eq_prop_optimal_solution_non-degeneracy_02}
  & \mathbb P \left( \mathbf x_{j_0} \in \underline{\mathbb D}_{j_0}^\circ \,\Big|\, \mathbf R_k'=0, \cdots, \mathbf R_{j_0-1}'=0 \right) \nonumber \\
  &= \mathbb P \left( \mathbf x_{j_0} \in \underline{\mathbb D}_{j_0}^\circ \,\Big|\, \mathbf R_k'=0, \cdots, \mathbf R_{j_0}'=0 \right) \cdot \mathbb P \left( \mathbf R_{j_0}'=0 \,\Big|\, \mathbf R_k'=0, \cdots, \mathbf R_{j_0-1}'=0 \right)
\end{align}
By \eqref{eq_prop_optimal_solution_non-degeneracy_03}, \eqref{eq_new_policies}, and \eqref{eq_prop_optimal_solution_non-degeneracy_02}, we can see that 
\begin{align} \label{eq_prop_optimal_solution_non-degeneracy_04}
 \mathbb P \left( \mathbf R_{j_0}'=0 \,\Big|\, \mathbf R_k'=0, \cdots, \mathbf R_{j_0-1}'=0 \right) > 0
\end{align}

Due to \eqref{eq_prop_optimal_solution_non-degeneracy_01}, by Remark \ref{remark_degeneracy}, we can see that
\begin{align*}
 \mathbb E_{\mathbf x_{j_0}} \left[ J_{j_0} \left( \mathbf x_{j_0}, \boldsymbol{\mathcal P}_{j_0:N}^\ast, \hat x_{j_0:N}^\ast \right) \,\Big|\, \mathbf R_k^\ast=0, \cdots, \mathbf R_{j_0-1}^\ast=0 \right] = c_{j_0}'
\end{align*}
While, by \eqref{eq_cost-to-go_01}, \eqref{eq_new_policies_estimates}, and \eqref{eq_prop_optimal_solution_non-degeneracy_04}, we can see that
\begin{align*}
 &\mathbb E_{\mathbf x_{j_0}} \left[ J_{j_0}\left(\mathbf x_{j_0}, \boldsymbol{\mathcal P}_{j_0:N}', \hat x_{j_0:N}' \right) \,\Big|\, \mathbf R_k'=0, \cdots, \mathbf R_{j_0-1}'=0 \right] \\
 &=\mathbb E_{\mathbf x_{j_0}} \left[ J_{j_0}^\ast \left(\mathbf x_{j_0}, \hat x_{j_0:N}' \right) \,\Big|\, \mathbf R_k'=0, \cdots, \mathbf R_{j_0-1}'=0 \right] < c_{j_0}'
\end{align*}
These relations imply that
\begin{align*}
 & \mathbb E_{\mathbf x_{j_0}} \left[ J_{j_0}\left( \mathbf x_{j_0}, \boldsymbol{\mathcal P}_{j_0:N}', \hat x_{j_0:N}' \right) \,\Big|\, \mathbf R_k'=0, \cdots, \mathbf R_{j_0-1}'=0 \right] \\
 & < \mathbb E_{\mathbf x_{j_0}} \left[ J_{j_0} \left( \mathbf x_{j_0}, \boldsymbol{\mathcal P}_{j_0:N}^\ast, \hat x_{j_0:N}^\ast \right) \,\Big|\, \mathbf R_k^\ast=0, \cdots, \mathbf R_{j_0-1}^\ast=0 \right]
\end{align*}
Using the facts that $\boldsymbol{\mathcal P}_j' = \boldsymbol{\mathcal P}_j^\ast$ and $\hat x_j' = \hat x_j^\ast$ for each $j$ in ${\{k, \cdots, j_0-1\}}$, and $j_0$ is the smallest integer for which \eqref{eq_prop_optimal_solution_non-degeneracy_01} holds, from \eqref{eq_cost-to-go_01}, we can infer that 
$$\mathbb E_{\mathbf x_k} \left[ J_k\left(\mathbf x_k, \boldsymbol{\mathcal P}_{k:N}', \hat x_{k:N}' \right) \right] < \mathbb E_{\mathbf x_k} \left[ J_k\left(\mathbf x_k, \boldsymbol{\mathcal P}_{k:N}^\ast, \hat x_{k:N}^\ast \right) \right] $$
which violates $\hat x_{k:N}^\ast$ is a global minimizer. Therefore, every global minimizer has to be non-degenerate. \hfill \QED

\begin{lemma} \label{lemma_bounded_estimates}
 Consider policies $\left\{ \boldsymbol{\mathcal P}_{k:N}^{(i)} \right\}_{i \in \mathbb N}$ and estimates $\left\{ \hat x_{k:N}^{(i)} \right\}_{i \in \mathbb N}$ satisfying ${\boldsymbol{\mathcal P}_{k:N}^{(i)} \in \boldsymbol{\mathfrak P}\left( \hat x_{k:N}^{(i-1)} \right)}$ for all $i$ in $\{k, \cdots, N\}$. Suppose that there exists a positive constant $\epsilon$ for which the following holds for all $i$ in $\mathbb N$ and $j$ in $\{k, \cdots, j_0\}$:
 \begin{align} \label{eq_lemma_bounded_estimates_01}
   \mathbb P \left( \mathbf R_j^{(i)}=0 \,\Big|\, \mathbf R_k^{(i)}=0, \cdots, \mathbf R_{j-1}^{(i)}=0 \right) \geq \epsilon
 \end{align}
 subject to $\mathbf R_j^{(i)} = \boldsymbol{\mathcal P}_j^{(i)} \left( \mathbf x_j \right)$ for each $i$ in $\mathbb N$ and $j$ in $\{k, \cdots, j_0\}$. Then the sequence $\left\{ \hat x_j^{(i)} \right\}_{i \in \mathbb N}$ is bounded for all $j$ in $\{k, \cdots, j_0\}$.
\end{lemma}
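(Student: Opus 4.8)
The plan is to prove boundedness of $\bigl\{\hat x_j^{(i)}\bigr\}_{i\in\mathbb N}$ by induction on $j$ from $j=k$ to $j=j_0$. Since $\hat\theta_j^{(i)}$ always lies in $[0,2\pi)$, it suffices to bound the Euclidean position $\bigl(\hat p_{1,j}^{(i)},\hat p_{2,j}^{(i)}\bigr)$ uniformly in $i$. The key idea is to control $\hat x_j^{(i)}$ not through its own definition but through its role as the reference estimate of the \emph{next} policy: since the best-response relation is assumed at every iteration, it holds in particular at $i+1$, so $\boldsymbol{\mathcal P}_{k:N}^{(i+1)}\in\boldsymbol{\mathfrak P}\bigl(\hat x_{k:N}^{(i)}\bigr)$, and Proposition \ref{prop_decision_set_containment}, applied with the estimates $\hat x_{k:N}^{(i)}$, confines the conditional law of $\mathbf x_j$ given $\mathbf R_k^{(i+1)}=0,\cdots,\mathbf R_j^{(i+1)}=0$ to the ball $\mathbb K_j^{(i)}=\bigl\{x:d^2(x,\hat x_j^{(i)})\le c_j'\bigr\}$. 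Combining this with the law of total probability and the strict non-degeneracy \eqref{eq_lemma_bounded_estimates_01} at iteration $i+1$ yields
\begin{align*}
  \mathbb P\bigl(\mathbf x_j\in\mathbb K_j^{(i)}\,\big|\,\mathbf R_k^{(i+1)}=0,\cdots,\mathbf R_{j-1}^{(i+1)}=0\bigr)\ge\epsilon .
\end{align*}
Since the orientation terms in $d^2$ are non-negative, $d^2(x,\hat x_j^{(i)})\ge\|(p_1,p_2)-(\hat p_{1,j}^{(i)},\hat p_{2,j}^{(i)})\|^2$ for the position part $(p_1,p_2)$ of $x$, so membership in $\mathbb K_j^{(i)}$ forces the position within $\sqrt{c_j'}$ of $(\hat p_{1,j}^{(i)},\hat p_{2,j}^{(i)})$.

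Next I would turn this lower bound on mass near $(\hat p_{1,j}^{(i)},\hat p_{2,j}^{(i)})$ into an upper bound on its norm by a Markov-type tail argument. Let $\mu_{j|j-1}^{(i+1)}$ denote the conditional law of $\mathbf x_j$ given $\mathbf R_k^{(i+1)}=0,\cdots,\mathbf R_{j-1}^{(i+1)}=0$, in the notation of Proposition \ref{prop_time_evolution}, and suppose a uniform first-moment bound $\mathbb E_{\mu_{j|j-1}^{(i+1)}}\bigl[\|(\mathbf p_{1,j},\mathbf p_{2,j})\|\bigr]\le C_j$ holds for all $i$. If $\|(\hat p_{1,j}^{(i)},\hat p_{2,j}^{(i)})\|>t+\sqrt{c_j'}$, then the ball of radius $\sqrt{c_j'}$ about $(\hat p_{1,j}^{(i)},\hat p_{2,j}^{(i)})$ lies inside $\{\|(\mathbf p_{1,j},\mathbf p_{2,j})\|>t\}$, so the displayed bound and Markov's inequality force $\epsilon\le C_j/t$. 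Letting $t\downarrow C_j/\epsilon$ gives the uniform estimate $\|(\hat p_{1,j}^{(i)},\hat p_{2,j}^{(i)})\|\le C_j/\epsilon+\sqrt{c_j'}$, which is the desired bound.

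It then remains to supply the moment constant $C_j$, and this is where the induction and the dynamics \eqref{eq_spp_model} enter. Since one step of the model displaces the position by exactly $\mathbf v_{j-1}$ in Euclidean norm, $\mathbb E\bigl[\|(\mathbf p_{1,j},\mathbf p_{2,j})\|\,\big|\,\mathbf x_{j-1}=x\bigr]\le\|(p_{1,j-1},p_{2,j-1})\|+\bar v$ with $\bar v=\mathbb E[\mathbf v_{j-1}]<\infty$ for the velocity laws considered; integrating against $\mu_{j-1|j-1}^{(i+1)}$ via the process update rule of Proposition \ref{prop_time_evolution} gives $\mathbb E_{\mu_{j|j-1}^{(i+1)}}[\|(\mathbf p_{1,j},\mathbf p_{2,j})\|]\le\bar v+\mathbb E_{\mu_{j-1|j-1}^{(i+1)}}[\|(\mathbf p_{1,j-1},\mathbf p_{2,j-1})\|]$. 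For the base case $j=k$ the conditioning is vacuous and $\mathbf x_{k-1}=0$, so $C_k=\bar v$. For $k<j\le j_0$, Proposition \ref{prop_decision_set_containment} applied at time $j-1$ confines $\mu_{j-1|j-1}^{(i+1)}$ to $\mathbb K_{j-1}^{(i)}$, hence $\mathbb E_{\mu_{j-1|j-1}^{(i+1)}}[\|(\mathbf p_{1,j-1},\mathbf p_{2,j-1})\|]\le\|(\hat p_{1,j-1}^{(i)},\hat p_{2,j-1}^{(i)})\|+\sqrt{c_{j-1}'}$, and the induction hypothesis bounds the latter by a constant $B_{j-1}$ uniform in $i$; thus $C_j=\bar v+B_{j-1}+\sqrt{c_{j-1}'}$ closes the induction.

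The main obstacle I anticipate is the index and non-degeneracy bookkeeping: I must justify invoking Proposition \ref{prop_decision_set_containment} even though strict non-degeneracy is assumed only up to $j_0$. The resolution is that its conclusion for a fixed $j\le j_0$ uses only the best-response (threshold) structure at time $j$ and the positivity of the continuation probabilities up to time $j$, both guaranteed by \eqref{eq_lemma_bounded_estimates_01}, so the argument never refers to times beyond $j_0$. A secondary point to record is the finiteness of $\bar v$, which holds for the Weibull translational velocity used here and, more generally, whenever the increments have a finite first moment; this is precisely what keeps the conditional positions integrable and their means bounded.
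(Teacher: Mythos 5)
Your core mechanism is the same as the paper's: you invoke Proposition \ref{prop_decision_set_containment} for the best response $\boldsymbol{\mathcal P}_{k:N}^{(i+1)} \in \boldsymbol{\mathfrak P}\bigl(\hat x_{k:N}^{(i)}\bigr)$ to confine the conditional law of $\mathbf x_j$ to the ball $\mathbb K_j^{(i)}$ of fixed radius $\sqrt{c_j'}$ about $\hat x_j^{(i)}$, and then use \eqref{eq_lemma_bounded_estimates_01} to force that ball to carry conditional mass at least $\epsilon$; your handling of the non-degeneracy bookkeeping for $j \leq j_0$ is also consistent with how the paper itself uses that proposition. Where you diverge is in how you conclude that the ball's center cannot escape to infinity. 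The paper argues by contradiction: it peels off the conditioning at the cost of a factor $\epsilon^{-(j'-k)}$, obtaining $\epsilon \leq \epsilon^{k-j'}\,\mathbb P\bigl(\mathbf x_{j'} \in \mathbb K_{j'}^{(i_l)}\bigr)$ where the right-hand probability is taken under the single, fixed, unconditional law of $\mathbf x_{j'}$ (initial condition $\mathbf x_{k-1}=0$); tightness of that one measure (Lemma \ref{lemma_tightness}) then sends the right-hand side to zero along any subsequence of centers diverging to infinity. You instead work directly with the conditional laws $\mu_{j|j-1}^{(i+1)}$ and control them by a uniform first-moment bound propagated forward by induction through the dynamics \eqref{eq_spp_model}, closing with Markov's inequality.

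The genuine gap is the integrability hypothesis your route requires. Your base case $j=k$ and the one-step increment bound both need $\bar v = \mathbb E[\mathbf v_{j}] < \infty$, and you acknowledge this only as a ``secondary point'' that holds ``for the velocity laws considered.'' But the lemma is stated and used (in the proofs of Lemma \ref{lemma_optimal_solution_containment} and Theorem \ref{thm_procedure_convergence}) under only Assumption \ref{assump_transition_probability}, which imposes continuity and positivity of the transition kernel and says nothing about moments of $\mathbf v_k$; the Weibull model appears only in the application section. So as written your proof establishes a strictly weaker statement than the paper's. The fix is exactly the paper's trick: rather than bounding $\mathbb E_{\mu_{j|j-1}^{(i+1)}}\bigl[\|(\mathbf p_{1,j},\mathbf p_{2,j})\|\bigr]$, lower-bound the conditional probability of $\mathbb K_j^{(i)}$ by dropping the conditioning (each conditioning event has probability at least $\epsilon$, so the conditional probability of any event is at most $\epsilon^{-(j-k)}$ times its unconditional probability) and then use that every Borel probability measure on the complete separable metric space $\mathbb R^2 \times [0,2\pi)$ is tight, so fixed-radius balls with centers diverging to infinity must have vanishing unconditional mass. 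That substitution removes the moment assumption and also dispenses with the forward induction entirely.
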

\begin{proof}
 By contradiction, suppose that there exists $j'$ in $\{k, \cdots, j_0\}$ such that for a subsequence $\left\{ \hat x_{j'}^{(i_l-1)} \right\}_{l \in \mathbb N}$ of $\left\{ \hat x_{j'}^{(i)} \right\}_{i \in \mathbb N}$, it holds that
 \begin{align} \label{eq_lemma_bounded_estimates_03}
   d\left( 0, \hat x_{j'}^{(i_l-1) }\right) \overset{l \to \infty}{\longrightarrow} \infty
 \end{align}

 For each $l$ in $\mathbb N$, let us choose a compact set $\mathbb K_{j'}^{(i_l)} = \left\{ x \in \mathbb R^2 \times [0, 2\pi) \,\Big|\, d^2 \left( x, \hat x_{j'}^{(i_l-1)}\right) \leq c_j' \right\}$. Then, according to Proposition \ref{prop_decision_set_containment}, the following holds for all $l$ in $\mathbb N$:
 \begin{align}
   \mathbb P \left( \mathbf x_{j'} \in \mathbb K_{j'}^{(i_l)} \,\Big|\, \mathbf R_k^{(i_l)}=0, \cdots, \mathbf R_{j'}^{(i_l)}=0 \right) = 1
 \end{align}
 subject to $\mathbf R_{j}^{(i_l)} = \boldsymbol{\mathcal P}_{j}^{(i_l)}\left( \mathbf x_{j} \right)$ for each $l$ in $\mathbb N$ and $j$ in $\{k, \cdots, j'\}$. Using \eqref{eq_lemma_bounded_estimates_01}, we can derive the following:
 \begin{align} \label{eq_lemma_bounded_estimates_02}
   &\mathbb P \left( \mathbf R_{j'}^{(i_l)}=0 \,\Big|\, \mathbf R_k^{(i_l)}=0, \cdots, \mathbf R_{j'-1}^{(i_l)}=0 \right) \nonumber \\
   &= \frac{\mathbb P \left( \mathbf R_{j'}^{(i_l)}=0 \,\Big|\, \mathbf x_{j'} \in \mathbb K_{j'}^{(i_l)}, \mathbf R_k^{(i_l)}=0, \cdots, \mathbf R_{j'-1}^{(i_l)}=0 \right)}{\mathbb P \left( \mathbf x_{j'} \in \mathbb K_{j'}^{(i_l)} \,\Big|\, \mathbf R_k^{(i_l)}=0, \cdots, \mathbf R_{j'}^{(i_l)}=0 \right)}  \cdot \mathbb P \left( \mathbf x_{j'} \in \mathbb K_{j'}^{(i_l)} \,\Big|\, \mathbf R_k^{(i_l)}=0, \cdots, \mathbf R_{j'-1}^{(i_l)}=0 \right) \nonumber \\
   &\leq \mathbb P \left( \mathbf x_{j'} \in \mathbb K_{j'}^{(i_l)} \,\Big|\, \mathbf R_k^{(i_l)}=0, \cdots, \mathbf R_{j'-1}^{(i_l)}=0 \right) \nonumber \\
   &= \frac{\mathbb P \left(\mathbf x_{j'} \in \mathbb K_{j'}^{(i_l)}, \mathbf R_k^{(i_l)}=0, \cdots, \mathbf R_{j'-1}^{(i_l)}=0 \right)}{\prod_{j=k}^{j'-1} \mathbb P \left( \mathbf R_j^{(i_l)}=0 \,\Big|\, \mathbf R_k^{(i_l)}=0, \cdots, \mathbf R_{j-1}^{(i_l)}=0 \right)} \nonumber \\
   & \leq \epsilon^{k-j'} \cdot \mathbb P \left( \mathbf x_{j'} \in \mathbb K_{j'}^{(i_l)} \right)
 \end{align}
 holds for all $l$ in $\mathbb N$. Hence, by Lemma \ref{lemma_tightness} and \eqref{eq_lemma_bounded_estimates_03}, we can see that
 \begin{align}
   \lim_{l \to \infty} \mathbb P \left( \mathbf x_{j'} \in \mathbb K_{j'}^{(i_l)} \right)=0
 \end{align}
 In conjunction with \eqref{eq_lemma_bounded_estimates_02}, we conclude that
 \begin{align} \label{eq_proof_lemma_optimal_solution_containment_06}
   \lim_{l \to \infty} \mathbb P \left( \mathbf R_{j'}^{(i_l)}=0 \,\Big|\, \mathbf R_k^{(i_l)}=0, \cdots, \mathbf R_{j'-1}^{(i_l)}=0 \right) = 0
 \end{align}
 This contradicts the fact that \eqref{eq_lemma_bounded_estimates_01} holds for all $i$ in $\mathbb N$ and $j$ in $\{k, \cdots, j_0\}$.
\end{proof}

\textit{Proof of Lemma \ref{lemma_optimal_solution_containment}:}
For a positive real $r$, let us define 
\begin{align}
 \mathbb K_r \overset{def}{=} \left\{ \hat x_{k:N} \in \left( \mathbb R^2 \times [0, 2\pi) \right)^{N-k+1} \,\Big|\, d \left( 0, \hat x_j \right) \leq r \text{ for all $j$ in $\{k, \cdots, N\}$}\right\}
\end{align}
To prove the lemma, it is sufficient to show that there exists $r>0$ for which with $\mathbb K = \mathbb K_r$, the statement of the lemma is true. By contradiction, suppose that there exists a sequence $\left\{ \hat x_{k:N}^{(i)}\right\}_{i \in \mathbb N} \subset \left( \mathbb R^2 \times [0, 2\pi) \right)^{N-k+1}$ that satisfies the following hypotheses:
\begin{enumerate} [label=\bfseries (H\arabic*)]
\item For each element $\hat x_{k:N}^{(i)}$ of the sequence, it holds that $\hat x_{k:N}^{(i)} \notin \mathbb K_i$. \label{enum_h_01}

\item For every $\hat x_{k:N}$ in $\mathbb K_i$, it holds that $\mathcal G \left( \hat x_{k:N} \right) > \mathcal G \left( \hat x_{k:N}^{(i)} \right)$. \label{enum_h_02}
\end{enumerate}

We constructively prove that the hypothesis \ref{enum_h_02} is violated for sufficiently large $i$ in $\mathbb N$. To proceed, let us select policies $\left\{ \boldsymbol{\mathcal P}_{k:N}^{(i)} \right\}_{i \in \mathbb N}$ that satisfy $\boldsymbol{\mathcal P}_{k:N}^{(i)} \in \boldsymbol{\mathfrak P} \left( \hat x_{k:N}^{(i-1)} \right)$. Let $j_0 \in \{k, \cdots, N\}$ be the smallest integer for which there is a subsequence $\left\{ \boldsymbol{\mathcal P}_{k:j_0}^{(i_l)} \right\}_{l \in \mathbb N}$ of $\left\{ \boldsymbol{\mathcal P}_{k:j_0}^{(i)} \right\}_{i \in \mathbb N}$ satisfying\footnote{Such $j_0$ always exists; otherwise according to Lemma \ref{lemma_bounded_estimates}, the sequence $\left\{ \hat x_{k:N}^{(i)} \right\}_{i \in \mathbb N}$ is bounded, which violates \ref{enum_h_01}.}
\begin{align} \label{eq_proof_lemma_optimal_solution_containment_01}
 \lim_{l \to \infty} \mathbb P \left( \mathbf R_{j_0}^{(i_l)}=0 \,\Big|\, \mathbf R_k^{(i_l)}=0, \cdots, \mathbf R_{j_0-1}^{(i_l)}=0 \right) = 0
\end{align}
subject to $\mathbf R_{j}^{(i_l)} = \boldsymbol{\mathcal P}_{j}^{(i_l)}\left( \mathbf x_{j} \right)$ for each $l$ in $\mathbb N$ and $j$ in $\{k, \cdots, j_0\}$. Note from \eqref{eq_cost-to-go_01} and \eqref{eq_proof_lemma_optimal_solution_containment_01}, we have that
\begin{align} \label{eq_proof_lemma_optimal_solution_containment_03}
 \lim_{l \to \infty} \mathbb E_{\mathbf x_{j_0}} \left[ J_{j_0} \left( \mathbf x_{j_0}, \boldsymbol{\mathcal P}_{j_0:N}^{(i_l)}, \hat x_{j_0:N}^{(i_l-1)} \right) \,\Big|\, \mathbf R_k^{(i_l)}=0, \cdots, \mathbf R_{j_0-1}^{(i_l)}=0 \right] = c_{j_0}'
\end{align}
Also, according to Lemma \ref{lemma_bounded_estimates}, the sequence $\left\{ \hat x_j^{(i)} \right\}_{i \in \mathbb N}$ is bounded for all $j$ in ${\{k, \cdots, j_0-1\}}$.

Using Lemma \ref{lemma_non-degenerate_policy}, let us choose $\hat x_{j_0:N}^\circ \in \left( \mathbb R^2 \times [0, 2\pi) \right)^{N-j_0+1}$ for which the set given by
\begin{align}
  \underline{\mathbb D}_{j_0}^\circ = \left\{ x_{j_0} \in \mathbb R^2 \times [0, 2\pi) \,\Big|\, d^2 \left(x_{j_0}, \hat x_{j_0}^\circ \right) + \mathbb E_{\mathbf x_{j_0+1}} \left[ J_{j_0+1}^\ast \left( \mathbf x_{j_0+1}, \hat x_{j_0+1:N}^\circ \right) \,\Big|\, \mathbf x_{j_0} = x_{j_0} \right] < c_{j_0}' \right\}
\end{align}
is non-empty, where $J_{j_0+1}^\ast$ is defined in \eqref{eq_cost-to-go_02}. Note that by Proposition \ref{prop_mapping_G_j_continuity}
\begin{align*}
  d^2 \left(x_{j_0}, \hat x_{j_0}^\circ \right) + \mathbb E_{\mathbf x_{j_0+1}} \left[ J_{j_0+1}^\ast \left( \mathbf x_{j_0+1}, \hat x_{j_0+1:N}^\circ \right) \,\Big|\, \mathbf x_{j_0} = x_{j_0} \right]
\end{align*}
is a continuous function of $x_{j_0}$. Hence, for a positive constant $\epsilon$, the set defined by
\begin{align} \label{eq_proof_lemma_optimal_solution_containment_04}
  \mathbb B = \left\{ x_{j_0} \in \mathbb R^2 \times [0, 2\pi) \,\Big|\, d^2 \left(x_{j_0}, \hat x_{j_0}^\circ \right) + \mathbb E_{\mathbf x_{j_0+1}} \left[ J_{j_0+1}^\ast \left( \mathbf x_{j_0+1}, \hat x_{j_0+1:N}^\circ \right) \,\Big|\, \mathbf x_{j_0} = x_{j_0} \right] < c_{j_0}'-\epsilon \right\}
\end{align}
is non-empty and open.

For each $j$ in $\{j_0, \cdots, N\}$, let us define a function $\mathcal P_{j}^\circ: \mathbb R^2 \times [0, 2\pi)  \to \{0, 1\}$ as follows:
\begin{align*}
 \mathcal P_j^\circ(x_j) = \begin{cases} 0 & \text{if } x_j \in \underline{\mathbb D}_{j}^\circ \\ 1 & \text{otherwise} \end{cases}
\end{align*}
where
\begin{align*}
 \underline{\mathbb D}_{j}^\circ = \left\{ x_{j} \in \mathbb R^2 \times [0, 2\pi) \,\Big|\, d^2 \left(x_{j}, \hat x_{j}^\circ \right) + \mathbb E_{\mathbf x_{j+1}} \left[ J_{j+1}^\ast \left( \mathbf x_{j+1}, \hat x_{j+1:N}^\circ \right) \,\Big|\, \mathbf x_{j} = x_{j} \right] < c_{j}' \right\}
\end{align*}
Also, let us select sequences of policies $\left\{ \boldsymbol{\mathcal P}_{k:N}'^{(i)} \right\}_{i \in \mathbb N}$ and estimates $\left\{ \hat x_{k:N}'^{(i)} \right\}_{i \in \mathbb N}$ as follows: for each $i$ in $\mathbb N$ and $j$ in $\{k, \cdots, N\}$,
\begin{subequations} \label{eq_proof_lemma_optimal_solution_containment_07}
  \begin{align}
    \boldsymbol{\mathcal P}_j'^{(i)} &= \begin{cases} \boldsymbol{\mathcal P}_j^{(i)} & \text{if } j \in \{k, \cdots, j_0-1\} \\ \mathcal P_j^\circ & \text{if } j \in \{j_0, \cdots, N\} \end{cases} \\
    \hat x_j'^{(i)} &= \begin{cases} \hat x_j^{(i)} & \text{if } j \in \{k, \cdots, j_0-1\} \\ \hat x_j^\circ & \text{if } j \in \{j_0, \cdots, N\} \end{cases}
  \end{align}
\end{subequations}

We argue that $\hat x_{k:N}'^{(i-1)} \in \mathbb K_{i-1}$ and ${\mathcal G \left( \hat x_{k:N}'^{(i-1)} \right) < \mathcal G \left( \hat x_{k:N}^{(i-1)} \right)}$ hold for sufficiently large $i$. This contradicts the hypothesis \ref{enum_h_02}; hence it completes the proof of the lemma. In what follows, we show that this argument is valid. Note that by Lemma \ref{lemma_bounded_estimates}, the sequence $\left\{ \hat x_{j_0-1}^{(i-1)} \right\}_{i \in \mathbb N}$ is bounded. By Proposition \ref{prop_decision_set_containment} and by the fact that $\boldsymbol{\mathcal P}_j'^{(i)} = \boldsymbol{\mathcal P}_j^{(i)}$ for all $i$ in $\mathbb N$ and $j$ in $\{k, \cdots, j_0-1\}$, there exists a compact set $\mathbb K_{j_0-1}$ for which the following holds for all $i$ in $\mathbb N$:
\begin{align*}
  &\mathbb P \left( \mathbf x_{j_0-1} \in \mathbb K_{j_0-1} \,\Big|\, \mathbf R_k'^{(i)}=0, \cdots, \mathbf R_{j_0-1}'^{(i)}=0 \right) \\
  &= \mathbb P \left( \mathbf x_{j_0-1} \in \mathbb K_{j_0-1} \,\Big|\, \mathbf R_k^{(i)}=0, \cdots, \mathbf R_{j_0-1}^{(i)}=0 \right) = 1
\end{align*}
By Proposition \ref{prop_time_evolution}, we can write that
\begin{align*}
  &\mathbb P \left( \mathbf x_{j_0} \in \mathbb B \,\Big|\, \mathbf R_k'^{(i)}=0, \cdots, \mathbf R_{j_0-1}'^{(i)}=0 \right) \nonumber \\
  &= \int_{\mathbb K_{j_0-1}} \mathbb P \left( \mathbf x_{j_0} \in \mathbb B \,\Big|\, \mathbf x_{j_0-1} = x \right) \,\mathrm d\mu_{j_0-1|j_0-1}'^{(i)}
\end{align*}
where the probability measure $\mu_{j_0-1|j_0-1}'^{(i)}$ is defined as follows: For each $\mathbb A$ in $\mathfrak B$,
\begin{align*}
  \mu_{j_0-1|j_0-1}'^{(i)} \left( \mathbb A \right) = \mathbb P \left( \mathbf x_{j_0-1} \in \mathbb A \,\Big|\, \mathbf R_k'^{(i)}=0, \cdots, \mathbf R_{j_0-1}'^{(i)}=0 \right)
\end{align*}
Due to Assumption \ref{assump_transition_probability} and the compactness of $\mathbb K_{j_0-1}$, for a positive constant $\delta_{j_0}$ and for the set $\mathbb B$ given by \eqref{eq_proof_lemma_optimal_solution_containment_04}, the following holds for all $i$ in $\mathbb N$:
\begin{align} \label{eq_proof_lemma_optimal_solution_containment_02}
  &\mathbb P \left( \mathbf x_{j_0} \in \mathbb B \,\Big|\, \mathbf R_k'^{(i)}=0, \cdots, \mathbf R_{j_0-1}'^{(i)}=0 \right) \nonumber \\
  & \geq \delta_{j_0} \cdot \mu_{j_0-1|j_0-1}'^{(i)} \left( \mathbb K_{j_0-1} \right) = \delta_{j_0}
\end{align}

Since the sequence $\left\{ \hat x_j^{(i-1)} \right\}_{i \in \mathbb N}$ is bounded for all $j$ in $\{k, \cdots, j_0-1\}$, for sufficiently large $i$, we can see that $\hat x_{k:N}'^{(i-1)} \in \mathbb K_{i-1}$. In addition, by \eqref{eq_cost-to-go_01}, \eqref{eq_proof_lemma_optimal_solution_containment_04}, \eqref{eq_proof_lemma_optimal_solution_containment_07}, and \eqref{eq_proof_lemma_optimal_solution_containment_02}, we can see that the following relations hold for all $i$ in $\mathbb N$:
{ \allowdisplaybreaks
  \begin{align} \label{eq_proof_lemma_optimal_solution_containment_05}
    &\mathbb E_{\mathbf x_{j_0}} \left[ J_{j_0} \left( \mathbf x_{j_0}, \boldsymbol{\mathcal P}_{j_0:N}'^{(i)}, \hat x_{j_0:N}'^{(i-1)} \right) \,\Big|\, \mathbf R_k'^{(i)}=0, \cdots, \mathbf R_{j_0-1}'^{(i)}=0 \right] \nonumber \\
    &= \Big( \mathbb E_{\mathbf x_{j_0}} \left[ d^2\left( \mathbf x_{j_0}, \hat x_{j_0}^\circ \right) \,\Big|\, \mathbf R_k'^{(i)}=0, \cdots, \mathbf R_{j_0}'^{(i)}=0  \right] + \mathbb E_{\mathbf x_{j_0+1}} \left[ J_{j_0+1}^\ast \left( \mathbf x_{j_0+1}, \hat x_{j_0+1:N}^\circ \right) \,\Big|\, \mathbf R_k'^{(i)}=0, \cdots, \mathbf R_{j_0}'^{(i)}=0 \right] \Big) \nonumber \\
    &\quad \cdot \mathbb P\left(\mathbf R_{j_0}'^{(i)}=0 \,\Big|\, \mathbf R_k'^{(i)}=0, \cdots, \mathbf R_{j_0-1}'^{(i)}=0\right) + c_{j_0}' \cdot \left(1 - \mathbb P\left(\mathbf R_{j_0}'^{(i)}=0 \,\Big|\, \mathbf R_k'^{(i)}=0, \cdots, \mathbf R_{j_0-1}'^{(i)}=0\right) \right) \nonumber \\
    &\overset{\mathbf{(i)}}{\leq} \Big( \mathbb E_{\mathbf x_{j_0}} \left[ d^2\left( \mathbf x_{j_0}, \hat x_{j_0}^\circ \right) \,\Big|\, \mathbf R_k'^{(i)}=0, \cdots, \mathbf R_{j_0-1}'^{(i)}=0, \mathbf x_{j_0} \in \mathbb B  \right] \nonumber \\
    &\qquad + \mathbb E_{\mathbf x_{j_0+1}} \left[ J_{j_0+1}^\ast \left( \mathbf x_{j_0+1}, \hat x_{j_0+1:N}^o \right) \,\Big|\, \mathbf R_k'^{(i)}=0, \cdots, \mathbf R_{j_0-1}'^{(i)}=0, \mathbf x_{j_0} \in \mathbb B \right] \Big) \nonumber \\
    &\quad \cdot \mathbb P\left(\mathbf x_{j_0} \in \mathbb B \,\Big|\, \mathbf R_k'^{(i)}=0, \cdots, \mathbf R_{j_0-1}'^{(i)}=0\right) + c_{j_0}' \cdot \left(1 - \mathbb P\left(\mathbf x_{j_0} \in \mathbb B \,\Big|\, \mathbf R_k'^{(i)}=0, \cdots, \mathbf R_{j_0-1}'^{(i)}=0\right) \right) \nonumber \\
    &\overset{\mathbf{(ii)}}{<} \left( c_{j_0}' - \epsilon \right) \cdot \mathbb P\left(\mathbf x_{j_0} \in \mathbb B \,\Big|\, \mathbf R_k'^{(i)}=0, \cdots, \mathbf R_{j_0-1}'^{(i)}=0\right) + c_{j_0}' \cdot \left( 1 - \mathbb P\left(\mathbf x_{j_0} \in \mathbb B \,\Big|\, \mathbf R_k'^{(i)}=0, \cdots, \mathbf R_{j_0-1}'^{(i)}=0\right) \right) \nonumber \\
    &\overset{\mathbf{(iii)}}{\leq} c_{j_0}'-\epsilon \cdot \delta_{j_0}
  \end{align}}
To obtain $\mathbf{(i)}$, we use the fact that
\begin{align*}
 d^2 \left(\mathbf x_{j_0}, \hat x_{j_0}^\circ \right) + \mathbb E_{\mathbf x_{j_0+1}} \left[ J_{j_0+1}^\ast \left( \mathbf x_{j_0+1}, \hat x_{j_0+1:N}^\circ \right) \,\Big|\, \mathbf x_{j_0} \right] < c_{j_0}'
\end{align*}
if $\mathbf R_{j_0}'^{(i)} = 0$ (or equivalently $\mathbf x_{j_0} \in \underline{\mathbb D}_{j_0}^\circ$), and $\mathbb B$ is a subset of $\underline{\mathbb D}_{j_0}^\circ$; whereas $\mathbf{(ii)}$ and $\mathbf{(iii)}$ follow from \eqref{eq_proof_lemma_optimal_solution_containment_04} and \eqref{eq_proof_lemma_optimal_solution_containment_02}, respectively. By a similar argument as in the proof of Proposition \ref{prop_optimal_solution_non-degeneracy}, from \eqref{eq_proof_lemma_optimal_solution_containment_03} and \eqref{eq_proof_lemma_optimal_solution_containment_05}, we can observe that for sufficiently large $i$, there exists $\hat x_{k:N}^{(i-1)}$ in $\mathbb K_{i-1}$ for which it holds that
$$\mathcal G \left( \hat x_{k:N}'^{(i-1)} \right) \leq \mathbb E_{\mathbf x_k} \left[ J_k\left(\mathbf x_k, \boldsymbol{\mathcal P}_{k:N}'^{(i)}, \hat x_{k:N}'^{(i-1)} \right) \right] < \mathbb E_{\mathbf x_k} \left[ J_k\left(\mathbf x_k, \boldsymbol{\mathcal P}_{k:N}^{(i)}, \hat x_{k:N}^{(i-1)} \right) \right] = \mathcal G \left( \hat x_{k:N}^{(i-1)} \right)$$
\hfill \QED

\subsection{Proofs of Lemmas \ref{lemma_estimates_convergence_spp} and \ref{lemma_P_closed}} \label{proof_lemma_P_closed}
  \textit{Proof of Lemma \ref{lemma_estimates_convergence_spp}:}
  To prove the lemma, we first claim that the following hold for all $j$ in $\{k, \cdots, N\}$:
  \begin{subequations} \label{eq_prop_estimates_convergence_spp_01}
    \begin{align}
      \lim_{l \to \infty} \mathbb E \left[ \mathbf p_{1,j} \,\Big|\, \mathbf R_k^{(i_l)}=0, \cdots, \mathbf R_j^{(i_l)}=0 \right] &= \mathbb E \left[ \mathbf p_{1,j} \,\Big|\, \mathbf R_k=0, \cdots, \mathbf R_j=0 \right] \label{eq_prop_estimates_convergence_spp_01_a} \\
      \lim_{l \to \infty} \mathbb E \left[ \mathbf p_{2,j} \,\Big|\, \mathbf R_k^{(i_l)}=0, \cdots, \mathbf R_j^{(i_l)}=0 \right] &= \mathbb E \left[ \mathbf p_{2,j} \,\Big|\, \mathbf R_k=0, \cdots, \mathbf R_j=0 \right] \\
      \lim_{l \to \infty} \mathbb E \left[ \sin \boldsymbol \theta_j \,\Big|\, \mathbf R_k^{(i_l)}=0, \cdots, \mathbf R_j^{(i_l)}=0 \right] &= \mathbb E \left[ \sin \boldsymbol \theta_j \,\Big|\, \mathbf R_k=0, \cdots, \mathbf R_j=0 \right]\\
      \lim_{l \to \infty} \mathbb E \left[ \cos \boldsymbol \theta_j \,\Big|\, \mathbf R_k^{(i_l)}=0, \cdots, \mathbf R_j^{(i_l)}=0 \right] &= \mathbb E \left[ \cos \boldsymbol \theta_j \,\Big|\, \mathbf R_k=0, \cdots, \mathbf R_j=0 \right]
    \end{align}
  \end{subequations}
  subject to $\mathbf R_j^{(i_l)} = \boldsymbol{\mathcal P}_j^{(i_l)} \left( \mathbf x_j \right)$ and $\mathbf R_j = \boldsymbol{\mathcal P}_j \left( \mathbf x_j \right)$ for each $l$ in $\mathbb N$ and $j$ in $\{k, \cdots, N\}$.

  For notational convenient, let us define $$\alpha = \mathbb E^2 \left[ \sin \boldsymbol \theta_j \,\Big|\, \mathbf R_k=0, \cdots, \mathbf R_j=0 \right] + \mathbb E^2 \left[ \cos \boldsymbol \theta_j \,\Big|\, \mathbf R_k=0, \cdots, \mathbf R_j=0 \right]$$
   If $\alpha$ is non-zero, then by Corollary \ref{cor_best_response_mapping_X}, $\hat x_j^{(i_l)} = \begin{pmatrix} \hat p_{1,j}^{(i_l)} & \hat p_{2,j}^{(i_l)} & \hat{\theta}_j^{(i_l)} \end{pmatrix}^T$ and $\hat x_j = \begin{pmatrix} \hat p_{1,j} & \hat p_{2,j} & \hat{\theta}_j \end{pmatrix}^T$ satisfy
  \begin{align*}
    p_{1,j}^{(i_l)} &= \mathbb E \left[ \mathbf p_{1,j} \,\Big|\, \mathbf R_k^{(i_l)}=0, \cdots, \mathbf R_j^{(i_l)}=0 \right] \\
    p_{2,j}^{(i_l)} &= \mathbb E \left[ \mathbf p_{2,j} \,\Big|\, \mathbf R_k^{(i_l)}=0, \cdots, \mathbf R_j^{(i_l)}=0 \right] \\
    \sin \hat{\theta}_j^{(i_l)} &= \frac{\mathbb E \left[ \sin \boldsymbol \theta_j \,\Big|\, \mathbf R_k^{(i_l)}=0, \cdots, \mathbf R_j^{(i_l)}=0 \right]}{\mathbb E^2 \left[ \sin \boldsymbol \theta_j \,\Big|\, \mathbf R_k^{(i_l)}=0, \cdots, \mathbf R_j^{(i_l)}=0 \right] + \mathbb E^2 \left[ \cos \boldsymbol \theta_j \,\Big|\, \mathbf R_k^{(i_l)}=0, \cdots, \mathbf R_j^{(i_l)}=0 \right]}\\
    \cos \hat{\theta}_j^{(i_l)} &= \frac{\mathbb E \left[ \cos \boldsymbol \theta_j \,\Big|\, \mathbf R_k^{(i_l)}=0, \cdots, \mathbf R_j^{(i_l)}=0 \right]}{\mathbb E^2 \left[ \sin \boldsymbol \theta_j \,\Big|\, \mathbf R_k^{(i_l)}=0, \cdots, \mathbf R_j^{(i_l)}=0 \right] + \mathbb E^2 \left[ \cos \boldsymbol \theta_j \,\Big|\, \mathbf R_k^{(i_l)}=0, \cdots, \mathbf R_j^{(i_l)}=0 \right]}
  \end{align*}
  and
  \begin{align} \label{eq_prop_estimates_convergence_spp_02}
    \lim_{l \to \infty} d\left( \hat x_j^{(i_l)}, \hat x_j\right) = 0
  \end{align}

  For each $j$ in $\{k, \cdots, N\}$, let us define $\hat x_j^\ast = \begin{pmatrix} \hat p_{1,j}^\ast & \hat p_{2,j}^\ast & \hat{\theta}_j^\ast \end{pmatrix}$ as
  \begin{align*}
    p_{1,j}^\ast &= \mathbb E \left[ \mathbf p_{1,j} \,\Big|\, \mathbf R_k=0, \cdots, \mathbf R_j=0 \right] \\
    p_{2,j}^\ast &= \mathbb E \left[ \mathbf p_{2,j} \,\Big|\, \mathbf R_k=0, \cdots, \mathbf R_j=0 \right] \\
    \sin \hat{\theta}_j^\ast &= \alpha^{-1} \cdot \mathbb E \left[ \sin \boldsymbol \theta_j \,\Big|\, \mathbf R_k=0, \cdots, \mathbf R_j=0 \right] \\
    \cos \hat{\theta}_j^\ast &= \alpha^{-1} \cdot \mathbb E \left[ \cos \boldsymbol \theta_j \,\Big|\, \mathbf R_k=0, \cdots, \mathbf R_j=0 \right]
  \end{align*}
  Note that by Corollary \ref{cor_best_response_mapping_X}, it holds that $\hat x_{k:N}^\ast \in \mathfrak X \left( \boldsymbol{\mathcal P}_{k:N} \right)$. From \eqref{eq_prop_estimates_convergence_spp_01} and \eqref{eq_prop_estimates_convergence_spp_02}, we can observe that 
  \begin{align*}
    d\left( \hat x_j, \hat x_j^\ast \right) \leq d\left( \hat x_j^{(i_l)}, \hat x_j^\ast \right) + d\left( \hat x_j^{(i_l)}, \hat x_j \right) \overset{l \to \infty}{\longrightarrow} 0
  \end{align*}
  Therefore, we conclude that  $\hat x_{k:N} \in \mathfrak X \left( \boldsymbol{\mathcal P}_{k:N} \right)$.

  Otherwise, if $\alpha$ is zero then the value of 
  $$\mathbb E_{\mathbf x_j} \left[ d^2 \left( \mathbf x_j, \hat x_j \right) \,\Big|\, \mathbf R_k=0, \cdots, \mathbf R_j=0 \right]$$
  does not depend on $\hat \theta_j$, and by Proposition \ref{prop_best_response_mapping_X} and Corollary \ref{cor_best_response_mapping_X}, we can show that $\hat x_{k:N} \in \mathfrak X \left( \boldsymbol{\mathcal P}_{k:N} \right)$ if the following hold for all $j$ in $\{k, \cdots, N\}$:
  \begin{align*}
    p_{1,j} &= \mathbb E \left[ \mathbf p_{1,j} \,\Big|\, \mathbf R_k=0, \cdots, \mathbf R_j=0 \right] \\
    p_{2,j} &= \mathbb E \left[ \mathbf p_{2,j} \,\Big|\, \mathbf R_k=0, \cdots, \mathbf R_j=0 \right]
  \end{align*}
  This can be verified by similar arguments given above and \eqref{eq_prop_estimates_convergence_spp_01}. It remains to prove the claim.

  \textbf{Proof of the Claim:} Notice that $\mathbf p_{1,j}, \mathbf p_{2,j}, \sin \hat{\boldsymbol \theta}_j, \cos \hat{\boldsymbol \theta}_j$ are all continuous functions of $\mathbf x_j$. Hence to show that \eqref{eq_prop_estimates_convergence_spp_01} is true, it is sufficient to show that the following holds for any continuous function $g: \mathbb R^2 \times [0, 2\pi) \to \mathbb R$:
  \begin{align}
    \lim_{l \to \infty} \mathbb E \left[ g \left( \mathbf x_j \right) \,\Big|\, \mathbf R_k^{(i_l)}=0, \cdots, \mathbf R_j^{(i_l)}=0 \right] = \mathbb E \left[ g\left( \mathbf x_j \right) \,\Big|\, \mathbf R_k=0, \cdots, \mathbf R_j=0 \right]
  \end{align}

  Recall the definitions of $\mu_{j|j}^{(i_l)}$ and $\mu_{j|j}$ given in \eqref{eq_probablity_measures}. Since $\left\{ \boldsymbol{\mathcal P}_{k:N}^{(i_l)} \right\}_{l \in \mathbb N}$ converges to $\boldsymbol{\mathcal P}_{k:N}$, by Definition \ref{def_policy_convergence}, it holds that $\mu_{j|j}^{(i_l)} \rightarrow \mu_{j|j}$ for all $j$ in $\{k, \cdots, N\}$. Since $\left( \mathbb R^2 \times [0, 2\pi), d \right)$ is a complete, separable metric space, by the convergence of $\left\{ \mu_{j|j}^{(i_l)} \right\}_{i \in \mathbb N}$ and the Skorokhod representation theorem \cite{skorokhod1956_tpa}, there exist a sequence of random variables $\left\{ \mathbf y_j^{(i_l)} \right\}_{l \in \mathbb N}$ and a random variable $\mathbf y_j$ all defined on a common probability space $\left( \Omega, \mathfrak{F}, \nu \right)$ in which the following three facts are true:
  \begin{enumerate}  [label=\bfseries (F\arabic*)]
  \item $\mu_{j|j}^{(i_l)}$ is the probability measure of $\mathbf y_j^{(i_l)}$, i.e., $\nu \left( \left\{ \omega \in \Omega \,\Big|\, \mathbf y_j^{(i_l)}(\omega) \in \mathbb A \right\} \right) = \mu_{j|j}^{(i_l)} \left( \mathbb A\right)$ for each $\mathbb A$ in $\mathfrak B$. \label{enum_01}

  \item $\mu_{j|j}$ is the probability measure of $\mathbf y_j$, i.e., $\nu \left( \left\{ \omega \in \Omega \,\Big|\, \mathbf y_j(\omega) \in \mathbb A \right\} \right) = \mu_{j|j} \left( \mathbb A\right)$ for each $\mathbb A$ in $\mathfrak B$. \label{enum_02}

  \item $\left\{ \mathbf y_j^{(i_l)} \right\}_{i \in \mathbb{N}}$ converges to $\mathbf y_j$ almost surely. \label{enum_03}
  \end{enumerate}
  
  Since $\left\{ \hat x_j^{(i_l-1)} \right\}_{l \in \mathbb N}$ is a convergent sequence, according to Proposition \ref{prop_decision_set_containment},  there is a compact set $\mathbb K_j$ for which $\mu_{j|j}^{(i_l)} \left( \mathbb K_j \right) = 1$ for all $l$ in $\mathbb N$. Hence, by \textbf{(F1)}, the following holds for a positive real $\beta$:
  \begin{align} \label{eq_prop_estimates_convergence_linear_02}
    \int_{\left\{ \omega \in \Omega \,\Big|\, \left| g\left(\mathbf y_j^{(i_l)} (\omega)\right) \right| > \beta \right\} } \left| g\left(\mathbf y_j^{(i_l)} (\omega) \right) \right| \,\mathrm d\nu = \int_{\left\{ x \in \mathbb R^2 \times [0, 2\pi) \,\big|\, \left| g\left(x\right) \right| > \beta \right\}} \left| g\left(x\right)\right| \,\mathrm d\mu_{j|j}^{(i_l)} = 0
  \end{align}
  for all $l$ in $\mathbb N$. In conjunction with \ref{enum_03}, by an application of Theorem 10.3.6 in \cite{dudley_9780511755347}, we have that
  \begin{align} \label{eq_prop_estimates_convergence_linear_03}
    \lim_{l \to \infty } \int_{\Omega} g\left(\mathbf y_j^{(i_l)} (\omega) \right) \,\mathrm d\nu = \int_{\Omega} g\left(\mathbf y_j (\omega) \right) \,\mathrm d\nu
  \end{align}
  Therefore, from \textbf{(F2)} and \eqref{eq_prop_estimates_convergence_linear_03}, we can see that
  \begin{align}
    \lim_{l \to \infty} \int_{\mathbb R^2 \times [0, 2\pi)} g \left( x \right)\, \mathrm d \mu_{j|j}^{(i_l)} &= \lim_{l \to \infty } \int_{\Omega} g\left(\mathbf y_j^{(i_l)} (\omega) \right) \,\mathrm d\nu \nonumber \\
                                                                                                             &= \int_{\Omega} g\left(\mathbf y_j (\omega) \right) \,\mathrm d\nu \nonumber \\
                                                                                                             &= \int_{\mathbb R^2 \times [0, 2\pi) } g\left( x\right) \,\mathrm d\mu_{j|j}
  \end{align}
  This proves the claim. \hfill \QED


\begin{lemma} \label{lemma_set_convergence}
 Let $\left\{ \hat x_{k:N}^{(i)} \right\}_{i \in \mathbb N}$ be a sequence of estimates that converges to $\hat x_{k:N}$. The following inclusions hold for all $j$ in $\{k, \cdots, N\}$:
 \begin{subequations}
   \begin{align}
     \overline{\mathbb D}_j &\supset \bigcap_{i \in \mathbb N} \bigcup_{l \geq i} \overline{\mathbb D}_j^{(l)} \label{eq_lemma_set_convergence_01}\\
     \underline{\mathbb D}_j &\subset \bigcup_{i \in \mathbb N} \bigcap_{l \geq i} \underline{\mathbb D}_j^{(l)} \label{eq_lemma_set_convergence_02}
   \end{align}
 \end{subequations}
 where
 \begin{align*} 
   \overline{\mathbb D}_j &= \left\{ x_j \in \mathbb R^2 \times [0, 2\pi) \,\Big|\, d^2 \left( x_j, \hat x_j \right) + \mathbb E_{\mathbf x_{j+1}} \left[ J_{j+1}^\ast \left( \mathbf x_{j+1}, \hat x_{j+1:N} \right) \,\Big|\, \mathbf x_j = x_j \right] \leq c_j' \right\} \\
   \underline{\mathbb D}_j &= \left\{ x_j \in \mathbb R^2 \times [0, 2\pi) \,\Big|\, d^2 \left( x_j, \hat x_j \right) + \mathbb E_{\mathbf x_{j+1}} \left[ J_{j+1}^\ast \left( \mathbf x_{j+1}, \hat x_{j+1:N} \right) \,\Big|\, \mathbf x_j = x_j \right] < c_j' \right\}
 \end{align*}
 and
 \begin{align*} 
   \overline{\mathbb D}_j^{(i)} &= \left\{ x_j \in \mathbb R^2 \times [0, 2\pi) \,\Big|\, d^2 \left( x_j, \hat x_j^{(i)} \right) + \mathbb E_{\mathbf x_{j+1}} \left[ J_{j+1}^\ast \left( \mathbf x_{j+1}, \hat x_{j+1:N}^{(i)} \right) \,\Big|\, \mathbf x_j = x_j \right] \leq c_j' \right\} \\
   \underline{\mathbb D}_j^{(i)} &= \left\{ x_j \in \mathbb R^2 \times [0, 2\pi) \,\Big|\, d^2 \left( x_j, \hat x_j^{(i)} \right) + \mathbb E_{\mathbf x_{j+1}} \left[ J_{j+1}^\ast \left( \mathbf x_{j+1}, \hat x_{j+1:N}^{(i)} \right) \,\Big|\, \mathbf x_j = x_j \right] < c_j' \right\}
 \end{align*}
 where $J_{j+1}^\ast$ is defined in \eqref{eq_cost-to-go_02}.
\end{lemma}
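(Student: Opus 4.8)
The plan is to reduce both inclusions to the joint continuity of a single auxiliary function and then invoke elementary $\limsup$/$\liminf$ arguments on scalar inequalities. For each $j$ in $\{k,\cdots,N\}$ I would introduce
\[
\Phi_j \left( x_j, \hat x_{j:N} \right) = d^2 \left( x_j, \hat x_j \right) + \mathbb E_{\mathbf x_{j+1}} \left[ J_{j+1}^\ast \left( \mathbf x_{j+1}, \hat x_{j+1:N} \right) \,\Big|\, \mathbf x_j = x_j \right],
\]
so that $\overline{\mathbb D}_j$, $\underline{\mathbb D}_j$ are precisely the sublevel sets $\{x_j : \Phi_j(x_j, \hat x_{j:N}) \leq c_j'\}$ and $\{x_j : \Phi_j(x_j, \hat x_{j:N}) < c_j'\}$, and $\overline{\mathbb D}_j^{(i)}$, $\underline{\mathbb D}_j^{(i)}$ are their analogues with $\hat x_{j:N}^{(i)}$ in place of $\hat x_{j:N}$. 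By the definition of $\mathcal G_{j+1}$ in \eqref{eq_mapping_G_j} one has $\Phi_j(x_j, \hat x_{j:N}) = d^2(x_j, \hat x_j) + \mathcal G_{j+1}(x_j, \hat x_{j+1:N})$; the first step is therefore to record that $\Phi_j$ is continuous, which follows at once from the continuity of $\mathcal G_{j+1}$ in Proposition \ref{prop_mapping_G_j_continuity}. Fixing $x_j$, the map $\hat x_{j:N} \mapsto \Phi_j(x_j, \hat x_{j:N})$ is then continuous, and since $\hat x_{k:N}^{(i)} \Rightarrow \hat x_{k:N}$ is equivalent to convergence in the product metric (Definition \ref{def_estimate_convergence} together with \ref{enum_sequence_convergence} in Appendix \ref{section_product_metric_space}), I would conclude $\Phi_j(x_j, \hat x_{j:N}^{(i)}) \to \Phi_j(x_j, \hat x_{j:N})$ for every fixed $x_j$.

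For \eqref{eq_lemma_set_convergence_01}, I would take a point $x_j$ in the $\limsup$ set $\bigcap_{i}\bigcup_{l\ge i}\overline{\mathbb D}_j^{(l)}$, extract an increasing sequence $\{l_m\}_{m \in \mathbb N}$ along which $\Phi_j(x_j, \hat x_{j:N}^{(l_m)}) \le c_j'$, and pass to the limit using the continuity above to obtain $\Phi_j(x_j, \hat x_{j:N}) \le c_j'$, i.e. $x_j \in \overline{\mathbb D}_j$. For \eqref{eq_lemma_set_convergence_02}, I would take $x_j \in \underline{\mathbb D}_j$, so that $\Phi_j(x_j, \hat x_{j:N}) < c_j'$ holds strictly; continuity then yields an index $i_0$ with $\Phi_j(x_j, \hat x_{j:N}^{(l)}) < c_j'$ for all $l \ge i_0$, placing $x_j$ in $\bigcap_{l \ge i_0}\underline{\mathbb D}_j^{(l)}$ and hence in the $\liminf$ set $\bigcup_{i}\bigcap_{l\ge i}\underline{\mathbb D}_j^{(l)}$.

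I do not expect a serious obstacle here: the entire argument rests on the continuity of $\mathcal G_{j+1}$, which is already established in Proposition \ref{prop_mapping_G_j_continuity}, after which both inclusions are the standard topological facts that the $\limsup$ of closed sublevel sets lands inside the closed sublevel set of the limit, while the $\liminf$ of strict (open) sublevel sets contains the open sublevel set of the limit. The only point requiring mild care is that $\Phi_j$ depends on the entire tail $\hat x_{j:N}$ rather than on $\hat x_j$ alone, so I must use the continuity of $\mathcal G_{j+1}$ jointly in its estimate arguments together with the equivalence of estimate convergence and product-metric convergence in order to justify the limit $\Phi_j(x_j, \hat x_{j:N}^{(i)}) \to \Phi_j(x_j, \hat x_{j:N})$.
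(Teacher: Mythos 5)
Your proposal is correct and follows essentially the same route as the paper: both inclusions are reduced to the continuity of $x \mapsto d^2(x,\hat x_j)+\mathcal G_{j+1}(x,\hat x_{j+1:N})$ in the estimate arguments (Proposition \ref{prop_mapping_G_j_continuity}), with the first inclusion obtained by passing a non-strict inequality to the limit along a subsequence and the second obtained as the contrapositive of the same argument (the paper phrases it via complements, $\underline{\mathbb D}_j^c \supset \bigcap_{i}\bigcup_{l\ge i}(\underline{\mathbb D}_j^{(l)})^c$, which is logically identical to your direct eventual-membership argument).
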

\begin{proof}
 Let $x_j$ be an element of $\bigcap_{i \in \mathbb N} \bigcup_{l \geq i} \overline{\mathbb D}_j^{(l)}$. By definition, there exists an infinite index set $\left\{ i_l \right\}_{l \in \mathbb N}$ for which $x_j \in \overline{\mathbb D}_j^{(i_l)}$ holds for all $l$ in $\mathbb N$. Hence, we can see that
 \begin{align}
   d^2 \left( x_j, \hat x_j^{(i_l)} \right) + \mathbb E_{\mathbf x_{j+1}} \left[ J_{j+1}^\ast \left( \mathbf x_{j+1}, \hat x_{j+1:N}^{(i_l)} \right) \,\Big|\, \mathbf x_j = x_j \right] \leq c_j'
 \end{align}
 holds for all $l$ in $\mathbb N$. Using Proposition \ref{prop_mapping_G_j_continuity} and by the fact that $\left\{ \hat x_{k:N}^{(i)} \right\}_{i \in \mathbb N}$ converges to $\hat x_{k:N}$, we can derive
 \begin{align}
   d^2 \left( x_j, \hat x_j \right) + \mathbb E_{\mathbf x_{j+1}} \left[ J_{j+1}^\ast \left( \mathbf x_{j+1}, \hat x_{j+1:N} \right) \,\Big|\, \mathbf x_j = x_j \right] \leq c_j'
 \end{align}
 which shows that $x_j \in \overline{\mathbb D}_j$. This proves \eqref{eq_lemma_set_convergence_01}.

 To show that \eqref{eq_lemma_set_convergence_02} is true, we consider
 \begin{align}
   \underline{\mathbb D}_j^c \supset \bigcap_{i \in \mathbb N} \bigcup_{l \geq i} \left( \underline{\mathbb D}_j^{(l)} \right)^c
 \end{align}
 As the rest of the proof is similar to the above arguments, we omit the detail for brevity.
\end{proof}

\textit{Proof of Lemma \ref{lemma_P_closed}:}
For every $\mathbb A$ in $\mathfrak B$, let us define
\begin{subequations}
 \begin{align}
   \mu_{j|j-1}^{(i)} \left( \mathbb A \right) &= \mathbb P \left( \mathbf x_j \in \mathbb A \,\Big|\, \mathbf R_k^{(i)} = 0, \cdots, \mathbf R_{j-1}^{(i)} = 0 \right) \\
   \mu_{j|j}^{(i)} \left( \mathbb A \right) &= \mathbb P \left( \mathbf x_j \in \mathbb A \,\Big|\, \mathbf R_k^{(i)} = 0, \cdots, \mathbf R_j^{(i)} = 0 \right)
 \end{align}
\end{subequations}
subject to $\mathbf R_j^{(i)} = \boldsymbol{\mathcal P}_j^{(i)} \left( \mathbf x_j \right)$ for all $i$ in $\mathbb N$ and $j$ in $\{k, \cdots, N\}$. Since $\left\{ \hat x_{k:N}^{(i_l-1)} \right\}_{l \in \mathbb N}$ is a convergent sequence, according to Proposition \ref{prop_decision_set_containment}, there exist compact subsets $\left\{ \mathbb K_j \right\}_{j=k}^N$ for which the following holds for all $l$ in $\mathbb N$ and $j$ in $\{k, \cdots, N\}$:
\begin{align*}
  \mu_{j|j}^{(i_l)} \left( \mathbb K_j \right) = 1
\end{align*}
Hence, for all $j$ in $\{k, \cdots, N\}$, the probability measures $\left\{ \mu_{j|j}^{(i_l)} \right\}_{l \in \mathbb N}$ are uniformly tight in the sense of Definition~\ref{def_uniform_tightness}. By Theorem 11.5.4 in \cite{dudley_9780511755347}, for each $j$ in $\{k, \cdots, N\}$, there exists a subsequence $\left\{ \mu_{j|j}^{(i_l')} \right\}_{l \in \mathbb N}$ that converges to a probability measure $\mu_{j|j}$ defined on $\left( \mathbb R^2 \times [0, 2\pi), \mathfrak B \right)$. In addition, note that $\mathbb P \left( \mathbf R_j^{(i)} = 0 \,\Big|\, \mathbf R_k^{(i)} = 0, \cdots, \mathbf R_{j-1}^{(i)} \right)$ takes a value in a compact set $[\epsilon, 1]$ for all $i$ in $\mathbb N$ and $j$ in $\{k, \cdots, N\}$. Hence, there is an infinite index set $\left\{ i_l'' \right\}_{l \in \mathbb N}$ of $\left\{ i_l' \right\}_{l \in \mathbb N}$ for which the following holds for all $j$ in $\{k, \cdots, N\}$:
\begin{align*}
  \lim_{l \to \infty} \mathbb P \left( \mathbf R_j^{(i_l'')} = 0 \,\Big|\, \mathbf R_k^{(i_l'')} = 0, \cdots, \mathbf R_{j-1}^{(i_l'')} = 0 \right) = q_j
\end{align*}
where $q_j$ belongs to $[\epsilon, 1]$. For clear and simple presentation, without loss of generality, we prove the lemma by imposing the following three assumptions: For each $j$ in $\{k, \cdots, N\}$,
\begin{enumerate} [label=\bfseries (A\arabic*)]
\item The estimates $\left\{ \hat x_{k:N}^{(i-1)} \right\}_{i \in \mathbb N}$ converge to $\hat x_{k:N}'$.

\item The probability measures $\left\{ \mu_{j|j}^{(i)} \right\}_{i \in \mathbb N}$ converge to $\mu_{j|j}$. \label{enum_F1}

\item $\lim_{i \to \infty} \mathbb P \left( \mathbf R_j^{(i)} = 0 \,\Big|\, \mathbf R_k^{(i)} = 0, \cdots, \mathbf R_{j-1}^{(i)} = 0 \right) = q_j$ holds, where $q_j$ belongs to $[\epsilon, 1]$ \label{enum_F2}
\end{enumerate}

To complete the proof of the lemma, it is sufficient to show that there exist policies $\boldsymbol{\mathcal P}_{k:N}$ for which
\begin{enumerate} [label=\bfseries (F\arabic*)]
\item For every $\mathbb A$ in $\mathfrak B$, it holds that
 \begin{subequations}
   \begin{align}
     \mu_{j|j} \left( \mathbb A \right) = \mathbb P \left( \mathbf x_j \in \mathbb A \,\Big|\, \mathbf R_k = 0, \cdots, \mathbf R_j = 0 \right)
   \end{align}
   and
   \begin{align}
     q_j = \mathbb P \left( \mathbf R_j=0 \,\Big|\, \mathbf R_k = 0, \cdots, \mathbf R_{j-1} = 0 \right)
   \end{align}

 \end{subequations}
 subject to $\mathbf R_j = \boldsymbol{\mathcal P}_j\left( \mathbf x_j \right)$ for all $j$ in $\{k, \cdots, N\}$. \label{enum_A1}

\item The policies $\boldsymbol{\mathcal P}_{k:N}$ belong to $\boldsymbol{\mathfrak P} \left( \hat x_{k:N}' \right)$, where $\hat x_{k:N}'$ is the limit of $\left\{ \hat x_{k:N}^{(i-1)} \right\}_{i \in \mathbb N}$. \label{enum_A2}
\end{enumerate}
For notational convenience, let us define
\begin{subequations}
 \begin{align} 
   \overline{\mathbb D}_j &= \left\{ x_j \in \mathbb R^2 \times [0, 2\pi) \,\Big|\, d^2 \left( x_j, \hat x_j' \right) + \mathbb E_{\mathbf x_{j+1}} \left[ J_{j+1}^\ast \left( \mathbf x_{j+1}, \hat x_{j+1:N}' \right) \,\Big|\, \mathbf x_j = x_j \right] \leq c_j' \right\} \label{def_best_decision_sets_limits_01_appendix} \\
   \underline{\mathbb D}_j &= \left\{ x_j \in \mathbb R^2 \times [0, 2\pi) \,\Big|\, d^2 \left( x_j, \hat x_j' \right) + \mathbb E_{\mathbf x_{j+1}} \left[ J_{j+1}^\ast \left( \mathbf x_{j+1}, \hat x_{j+1:N}' \right) \,\Big|\, \mathbf x_j = x_j \right] < c_j' \right\} \label{def_best_decision_sets_limits_02_appendix}
 \end{align}
\end{subequations}
and
\begin{subequations}
 \begin{align} 
   \overline{\mathbb D}_j^{(i)} &= \left\{ x_j \in \mathbb R^2 \times [0, 2\pi) \,\Big|\, d^2 \left( x_j, \hat x_j^{(i-1)} \right) + \mathbb E_{\mathbf x_{j+1}} \left[ J_{j+1}^\ast \left( \mathbf x_{j+1}, \hat x_{j+1:N}^{(i-1)} \right) \,\Big|\, \mathbf x_j = x_j \right] \leq c_j' \right\} \label{def_best_decision_sets_01_appendix} \\
   \underline{\mathbb D}_j^{(i)} &= \left\{ x_j \in \mathbb R^2 \times [0, 2\pi) \,\Big|\, d^2 \left( x_j, \hat x_j^{(i-1)} \right) + \mathbb E_{\mathbf x_{j+1}} \left[ J_{j+1}^\ast \left( \mathbf x_{j+1}, \hat x_{j+1:N}^{(i-1)} \right) \,\Big|\, \mathbf x_j = x_j \right] < c_j' \right\} \label{def_best_decision_sets_02_appendix}
 \end{align}
\end{subequations}
for each $i$ in $\mathbb N$ and $j$ in $\{k, \cdots, N\}$, where $J_{j+1}^\ast$ is defined in \eqref{eq_cost-to-go_02}. Note that according to Proposition \ref{prop_mapping_G_j_continuity}, the sets $\overline{\mathbb D}_j$ and $\overline{\mathbb D}_j^{(i)}$ are closed, and the sets $\underline{\mathbb D}_j$ and $\underline{\mathbb D}_j^{(i)}$ are open.

We first make the following two claims to show that \ref{enum_A1} is true.

\noindent \textbf{Claim 1:} For each $\mathbb A$ in $\mathfrak B$, let us define 
\begin{align} \label{eq_claim1_01}
  \mu_{j|j-1} \left( \mathbb A \right) \overset{def}{=} \int_{\mathbb R^2 \times [0, 2\pi)} \mathbb P \left( \mathbf x_j \in \mathbb A \,\Big|\, \mathbf x_{j-1} = x \right) \,\mathrm d\mu_{j-1|j-1}
\end{align}
Then, $\mu_{j|j-1}$ is a probability measure on $\left( \mathbb R^2 \times [0, 2\pi), \mathfrak B\right)$, and it holds that
$$\lim_{i \to \infty} \mu_{j|j-1}^{(i)}\left( \mathbb A \right) = \mu_{j|j-1}\left( \mathbb A \right)$$
for all $\mathbb A$ in $\mathfrak B$.

To prove the claim, based on Proposition \ref{prop_time_evolution}, we note that
\begin{align}
 \mu_{j|j-1}^{(i)} \left( \mathbb A \right) &= \int_{\mathbb R^2 \times [0, 2\pi)} \mathbb P \left( \mathbf x_j \in \mathbb A \,\Big|\, \mathbf x_{j-1} = x \right) \,\mathrm d\mu_{j-1|j-1}^{(i)}
\end{align}
holds for each $\mathbb A$ in $\mathfrak B$. By definition, for fixed $x$ in $\mathbb R^2 \times [0, 2\pi)$, the mapping $\mathbb A \mapsto \mathbb P \left( \mathbf x_j \in \mathbb A \,\Big|\, \mathbf x_{j-1} = x \right)$ defines a probability measure on $\left( \mathbb R^2 \times [0, 2\pi), \mathfrak B\right)$. In conjunction with Assumption \ref{assump_transition_probability}, we can see that ${x \mapsto \mathbb P \left( \mathbf x_j \in \mathbb A \,\Big|\, \mathbf x_{j-1} = x \right)}$ is a bounded, continuous function. Hence, using \ref{enum_F1}, we have that
\begin{align} \label{eq_claim_01}
 \lim_{i \to \infty} \mu_{j|j-1}^{(i)} \left( \mathbb A \right) &= \lim_{i \to \infty} \int_{\mathbb R^2 \times [0, 2\pi)} \mathbb P \left( \mathbf x_j \in \mathbb A \,\Big|\, \mathbf x_{j-1} = x \right) \,\mathrm d\mu_{j-1|j-1}^{(i)} \nonumber \\
 &= \int_{\mathbb R^2 \times [0, 2\pi)} \mathbb P \left( \mathbf x_j \in \mathbb A \,\Big|\, \mathbf x_{j-1} = x \right) \,\mathrm d\mu_{j-1|j-1} = \mu_{j|j-1} \left( \mathbb A \right)
\end{align}

Lastly, the argument that $\mu_{j|j-1}$ is a probability measure on $\left( \mathbb R^2 \times [0, 2\pi), \mathfrak B \right)$ follows from \eqref{eq_claim_01} and the fact that $\mathbb A \mapsto \mathbb P \left( \mathbf x_j \in \mathbb A \,\Big|\, \mathbf x_{j-1} = x \right)$ is a probability measure on $\left( \mathbb R^2 \times [0, 2\pi), \mathfrak B\right)$.
\hfill $\square$

\noindent \textbf{Claim 2:} There exists a measurable function $f_j: \mathbb R^2 \times [0, 2\pi) \to \mathbb [0, 1]$ for which
\begin{align}
 \mu_{j|j} \left( \mathbb A \right) = \frac{\int_{\mathbb A} f_j \,\mathrm d\mu_{j|j-1}}{q_j}
\end{align}
holds for all $\mathbb A$ in $\mathfrak B$, where $\mu_{j|j-1}$ is defined in \eqref{eq_claim1_01}.

Based on Lemma \ref{thm_portmanteau} and Proposition \ref{prop_time_evolution}, for any open set $\mathbb O$, we can see that the following relations hold:
\begin{align} \label{eq_proof_lemma_P_closed_01}
 \mu_{j|j} \left( \mathbb O \right) \leq \liminf_{i \to \infty} \mu_{j|j}^{(i)} \left( \mathbb O \right) &\overset{\mathbf{(i)}}{\leq} \lim_{i \to \infty} \frac{\mu_{j|j-1}^{(i)} \left( \mathbb O \right)}{\mathbb P \left( \mathbf R_j^{(i)}=0 \,\Big|\, \mathbf R_k^{(i)}=0, \cdots, \mathbf R_{j-1}^{(i)}=0 \right)} \nonumber \\
                                                                                                         &\overset{\mathbf{(ii)}}{=} \frac{\mu_{j|j-1}\left( \mathbb O \right)}{q_j}
\end{align}
where $\mathbf{(i)}$ follows from Proposition \ref{prop_time_evolution}, and $\mathbf{(ii)}$ follows from Claim 1 and \ref{enum_F2}. We argue that the following holds for any set $\mathbb A$ in $\mathfrak B$:
\begin{align} \label{eq_proof_lemma_P_closed_02}
 \mu_{j|j} \left( \mathbb A \right) \leq \frac{\mu_{j|j-1} \left( \mathbb A \right)}{q_j}
\end{align}
To justify the argument, by contradiction, suppose that for a set $\mathbb A$ in $\mathfrak B$ it holds that
\begin{align}
 \mu_{j|j} \left( \mathbb A \right) > \frac{\mu_{j|j-1} \left( \mathbb A \right)}{q_j}
\end{align}
By the closed regularity theorem (see Theorem 7.1.3 in \cite{dudley_9780511755347}) and Remark \ref{remark_outer_regularity}, we can choose an open set $\mathbb O$ containing $\mathbb A$ for which the following holds:
\begin{align}
 \mu_{j|j} \left( \mathbb O \right) \geq \mu_{j|j} \left( \mathbb A \right) &> \frac{\mu_{j|j-1} \left( \mathbb O \right)}{q_j} \nonumber \\
                                                                            &\geq \frac{\mu_{j|j-1} \left( \mathbb A \right)}{q_j}
\end{align}
This contradicts \eqref{eq_proof_lemma_P_closed_01}. 

Notice that \eqref{eq_proof_lemma_P_closed_02} implies that $\mu_{j|j}$ is \textit{absolutely continuous} with respect to $\mu_{j|j-1}$. According to the Radon-Nikodym theorem, there is a measurable function $f_j: \mathbb R^2 \times [0, 2\pi) \to \mathbb R_+$ for which the following holds for all $\mathbb A$ in $\mathfrak B$:
\begin{align} \label{eq_proof_lemma_P_closed_04}
 \mu_{j|j} \left( \mathbb A \right) = \frac{\int_{\mathbb A} f_j \,\mathrm d\mu_{j|j-1}}{q_j}
\end{align}
In addition, it can be verified that $f_j(x) \leq 1$ for almost every $x$ in $\mathbb R^2 \times [0, 2\pi)$; otherwise \eqref{eq_proof_lemma_P_closed_02} would be violated. \hfill $\square$

\noindent \textbf{Proof of \ref{enum_A1}:} Using the function $f_j$ obtained in Claim 2, let us define policies $\boldsymbol{\mathcal P}_{k:N}$ as follows: For each $j$ in $\{k, \cdots, N\}$
\begin{align} \label{eq_proof_lemma_P_closed_05}
  \boldsymbol{\mathcal P}_j(x) = \begin{cases} 0 & \text{with probability } f_j(x) \\ 1 & \text{with probability } 1-f_j(x)
  \end{cases}
\end{align}
In conjunction with \eqref{eq_proof_lemma_P_closed_04}, we can verify that under the policies $\boldsymbol{\mathcal P}_{k:N}$, the following holds:
\begin{align} \label{eq_proof_A1_01}
  \mu_{j|j} \left( \mathbb A \right) = \frac{\int_{\mathbb A} \mathbb P \left( \boldsymbol{\mathcal P}_j (\mathbf x_j) = 0 \,\Big|\, \mathbf x_j=x\right) \,\mathrm d\mu_{j|j-1}}{q_j}
\end{align}


Using \eqref{eq_proof_A1_01} and Proposition \ref{prop_time_evolution}, it can be verified that the following hold for every $\mathbb A$ in $\mathfrak B$:
\begin{subequations}
 \begin{align*}
   \mu_{j|j} \left( \mathbb A \right) = \mathbb P \left( \mathbf x_j \in \mathbb A \,\Big|\, \mathbf R_k = 0, \cdots, \mathbf R_j = 0 \right)
 \end{align*}
 and
 \begin{align*}
   q_j = \mathbb P \left( \mathbf R_j=0 \,\Big|\, \mathbf R_k = 0, \cdots, \mathbf R_{j-1} = 0 \right)
 \end{align*}
\end{subequations}
subject to $\mathbf R_j = \boldsymbol{\mathcal P}_j\left( \mathbf x_j \right)$ for all $j$ in $\{k, \cdots, N\}$. This completes the proof. \hfill \QED


Henceforth, we make two additional claims under the policies $\boldsymbol{\mathcal P}_{k:N}$ determined as in \eqref{eq_proof_lemma_P_closed_05} to show that \ref{enum_A2} is true.

\noindent \textbf{Claim 3:} For any Borel measurable subset $\mathbb A$ contained in $\overline{\mathbb D}_j^c$, it holds that 
$$\mathbb P \left( \mathbf x_j \in \mathbb A \,\Big|\, \mathbf R_k = 0, \cdots, \mathbf R_j = 0 \right)=0$$
where the set $\overline{\mathbb D}_j$ is defined in \eqref{def_best_decision_sets_limits_01_appendix}.

Notice that
\begin{align} \label{eq_proof_lemma_P_closed_03}
  &\mathbb P \left( \mathbf x_j \in \mathbb A \,\Big|\, \mathbf R_k=0, \cdots, \mathbf R_j=0\right) \nonumber \\
  &= \frac{\mathbb P \left( \mathbf x_j \in \mathbb A \,\Big|\, \mathbf R_k=0, \cdots, \mathbf R_{j-1}=0\right)}{\mathbb P \left( \mathbf R_j=0 \,\Big|\, \mathbf R_k=0, \cdots, \mathbf R_{j-1}=0 \right)} \nonumber \\
  &\quad -\frac{\mathbb P \left( \mathbf x_j \in \mathbb A \,\Big|\, \mathbf R_k=0, \cdots, \mathbf R_{j}=1\right) \cdot \mathbb P \left( \mathbf R_j=1 \,\Big|\, \mathbf R_k=0, \cdots, \mathbf R_{j-1}=0 \right)}{\mathbb P \left( \mathbf R_j=0 \,\Big|\, \mathbf R_k=0, \cdots, \mathbf R_{j-1}=0 \right)}
\end{align}

To prove the claim, let $\mathbb O$ be an open set contained in $\overline{\mathbb D}_j^c$. By Lemma \ref{lemma_optimal_policies} and Proposition \ref{prop_time_evolution}, we can derive the following:
\begin{align*}
 \mu_{j|j}^{(i)} \left( \mathbb O \right) = \mu_{j|j}^{(i)} \left( \mathbb O \cap \overline{\mathbb D}_j^{(i)} \right) \leq \frac{\mu_{j|j-1}^{(i)} \left( \mathbb O \cap \overline{\mathbb D}_j^{(i)} \right)}{\mathbb P \left( \mathbf R_j^{(i)}=0 \,\Big|\, \mathbf R_k^{(i)}=0, \cdots, \mathbf R_{j-1}^{(i)}=0 \right)}
\end{align*}
where the set $\overline{\mathbb D}_j^{(i)}$ is defined in \eqref{def_best_decision_sets_01_appendix}. By applying Theorem \ref{thm_portmanteau}, we can show that the following holds for all $i_0$ in $\mathbb N$:
\begin{align*}
 \mu_{j|j} \left( \mathbb O \right) &\leq \liminf_{i \to \infty} \mu_{j|j}^{(i)} \left( \mathbb O \right) \nonumber \\
                                    &\leq \liminf_{i \to \infty} \frac{\mu_{j|j-1}^{(i)} \left( \mathbb O \cap \overline{\mathbb D}_j^{(i)} \right)}{\mathbb P \left( \mathbf R_j^{(i)}=0 \,\Big|\, \mathbf R_k^{(i)}=0, \cdots, \mathbf R_{j-1}^{(i)}=0 \right)} \nonumber \\
                                    &\leq \liminf_{i \to \infty} \frac{\mu_{j|j-1}^{(i)} \left( \mathbb O \cap \left( \bigcup_{l \geq i} \overline{\mathbb D}_j^{(l)} \right) \right)}{\mathbb P \left( \mathbf R_j^{(i)} = 0 \,\Big|\, \mathbf R_k^{(i)}=0, \cdots, \mathbf R_{j-1}^{(i)}=0 \right)} \nonumber \\
                                    &\overset{\mathbf{(i)}}{\leq} \frac{\mu_{j|j-1} \left( \mathbb O \cap \left( \bigcup_{l \geq i_0} \overline{\mathbb D}_j^{(l)} \right) \right)}{\mathbb P \left( \mathbf R_j = 0 \,\Big|\, \mathbf R_k=0, \cdots, \mathbf R_{j-1}=0 \right)}
\end{align*}
where $\mathbf{(i)}$ follows from Claim 1, \ref{enum_A1}, and the fact that $\left\{ \bigcup_{l \geq i} \overline{\mathbb D}_j^{(l)} \right\}_{i \in \mathbb N}$ is a decreasing sequence of measurable sets. Hence, from Lemma \ref{lemma_set_convergence}, we have that
\begin{align*}
 \mu_{j|j} \left( \mathbb O \right) \leq \frac{\mu_{j|j-1} \left( \mathbb O \cap \left( \bigcap_{i \in \mathbb N} \bigcup_{l \geq i} \overline{\mathbb D}_j^{(l)} \right) \right)}{\mathbb P \left( \mathbf R_j = 0 \,\Big|\, \mathbf R_k=0, \cdots, \mathbf R_{j-1}=0 \right)} = 0
\end{align*}
Since $\overline{\mathbb D}_j^c$ is an open set, by selecting $\mathbb O = \overline{\mathbb D}_j^c$, we conclude that the following holds for every Borel measurable subset $\mathbb A$ of $\overline{\mathbb D}_j^c$:
\begin{align*}
 \mu_{j|j} \left( \mathbb A \right) \leq \mu_{j|j} \left( \overline{\mathbb D}_j^c \right) = 0
\end{align*}
 \hfill $\square$

\noindent \textbf{Claim 4:} Suppose that $\mathbb P \left( \mathbf R_j=1 \,\Big|\, \mathbf R_k=0, \cdots, \mathbf R_{j-1}= 0 \right)$ is non-zero. Then, for any Borel measurable subset $\mathbb A$ contained in the set $\underline{\mathbb D}_j$ given in \eqref{def_best_decision_sets_limits_02_appendix}, it holds that
$$\mathbb P \left( \mathbf x_j \in \mathbb A \,\Big|\, \mathbf R_k=0, \cdots, \mathbf R_j=1 \right) = 0$$

To prove the claim, let $\mathbb F$ be a closed set contained in $\underline{\mathbb D}_j$. Notice that, by Lemma \ref{lemma_optimal_policies}, the following holds for all $i$ in $\mathbb N$:
\begin{align} \label{eq_proof_lemma_P_closed_03}
  &\mathbb P \left( \mathbf x_j \in \mathbb F \cap \underline{\mathbb D}_j^{(i)} \,\Big|\, \mathbf R_k^{(i)}=0, \cdots, \mathbf R_j^{(i)}=0 \right) \nonumber \\
  &= \frac{\mathbb P \left( \mathbf x_j \in \mathbb F \cap \underline{\mathbb D}_j^{(i)} \,\Big|\, \mathbf R_k^{(i)}=0, \cdots, \mathbf R_{j-1}^{(i)}=0\right)}{\mathbb P \left( \mathbf R_j^{(i)}=0 \,\Big|\, \mathbf R_k^{(i)}=0, \cdots, \mathbf R_{j-1}^{(i)}=0 \right)}
\end{align}
Using \eqref{eq_proof_lemma_P_closed_03} and Theorem \ref{thm_portmanteau}, we can show that the following holds for all $i_0$ in $\mathbb N$:
\begin{align*}
 \mu_{j|j} \left( \mathbb F \right) &\geq \limsup_{i \to \infty} \mu_{j|j}^{(i)} \left( \mathbb F \right) \nonumber \\
                                    &\geq \limsup_{i \to \infty} \mu_{j|j}^{(i)} \left( \mathbb F \cap \underline{\mathbb D}_j^{(i)}\right) \nonumber \\
                                    &= \limsup_{i \to \infty} \frac{\mu_{j|j-1}^{(i)} \left( \mathbb F \cap \underline{\mathbb D}_j^{(i)} \right)}{\mathbb P \left( \mathbf R_j^{(i)}=0 ~\big|~ \mathbf R_k^{(i)}=0, \cdots, \mathbf R_{j-1}^{(i)}=0 \right)} \nonumber \\
                                    &\geq \limsup_{i \to \infty} \frac{\mu_{j|j-1}^{(i)} \left( \mathbb F \cap \left( \bigcap_{l \geq i} \underline{\mathbb D}_j^{(l)} \right) \right)}{\mathbb P \left( \mathbf R_j^{(i)}=0 ~\big|~ \mathbf R_k^{(i)}=0, \cdots, \mathbf R_{j-1}^{(i)}=0 \right)} \nonumber \\
                                    &\overset{\mathbf{(i)}}{\geq} \frac{\mu_{j|j-1} \left( \mathbb F \cap \left(\bigcap_{l \geq i_0} \underline{\mathbb D}_j^{(l)} \right) \right)}{\mathbb P \left( \mathbf R_j=0 ~\big|~ \mathbf R_k=0, \cdots, \mathbf R_{j-1}=0 \right)}
\end{align*}
where $\mathbf{(i)}$ follows from Claim 1, \ref{enum_A1}, and the fact that $\left\{ \bigcap_{l \geq i} \underline{\mathbb D}_j^{(l)} \right\}_{i \in \mathbb N}$ is an increasing sequence of measurable sets. Hence, from Lemma \ref{lemma_set_convergence}, we have that
\begin{align*}
 \mu_{j|j} \left( \mathbb F \right) &\geq \frac{\mu_{j|j-1} \left( \mathbb F \cap \left(\bigcup_{i \in \mathbb N} \bigcap_{l \geq i} \underline{\mathbb D}_j^{(l)} \right) \right)}{\mathbb P \left( \mathbf R_j=0 ~\big|~ \mathbf R_k=0, \cdots, \mathbf R_{j-1}=0 \right)} \nonumber \\
                                    &= \frac{\mu_{j|j-1} \left( \mathbb F \right)}{\mathbb P \left( \mathbf R_j=0 ~\big|~ \mathbf R_k=0, \cdots, \mathbf R_{j-1}=0 \right)}
\end{align*}
Using this relation, we can see that
\begin{align*}
  &\mu_{j|j-1} \left( \mathbb F \right) \nonumber \\
  &= \mathbb P \left( \mathbf x_j \in \mathbb F \,\Big|\, \mathbf R_k = 0, \cdots, \mathbf R_{j-1} = 0 \right) \nonumber \\
  &= \mathbb P \left( \mathbf x_j \in \mathbb F \,\Big|\, \mathbf R_k = 0, \cdots, \mathbf R_j = 0 \right) \cdot \mathbb P \left( \mathbf R_j = 0 \,\Big|\, \mathbf R_k = 0, \cdots, \mathbf R_{j-1} = 0 \right) \nonumber \\
  & \quad + \mathbb P \left( \mathbf x_j \in \mathbb F \,\Big|\, \mathbf R_k = 0, \cdots, \mathbf R_j = 1 \right) \cdot \mathbb P \left( \mathbf R_j = 1 \,\Big|\, \mathbf R_k = 0, \cdots, \mathbf R_{j-1} = 0 \right) \nonumber \\
  &\geq \mu_{j|j-1} \left( \mathbb F \right) \nonumber \\
  & \quad + \mathbb P \left( \mathbf x_j \in \mathbb F \,\Big|\, \mathbf R_k = 0, \cdots, \mathbf R_j = 1 \right) \cdot \mathbb P \left( \mathbf R_j = 1 \,\Big|\, \mathbf R_k = 0, \cdots, \mathbf R_{j-1} = 0 \right)
\end{align*}
Using the fact that $\mathbb P \left( \mathbf R_j = 1 \,\Big|\, \mathbf R_k = 0, \cdots, \mathbf R_{j-1} = 0 \right)$ is non-zero, we obtain
\begin{align*}
  \mathbb P \left( \mathbf x_j \in \mathbb F \,\Big|\, \mathbf R_k = 0, \cdots, \mathbf R_j = 1 \right) = 0
\end{align*}
Based on the closed regularity theorem (see Theorem 7.1.3 in \cite{dudley_9780511755347}), we can see that the following holds for any Borel measurable set $\mathbb A$ contained in $\underline{\mathbb D}_j$:
\begin{align*}
  \mathbb P \left( \mathbf x_j \in \mathbb A \,\Big|\, \mathbf R_k = 0, \cdots, \mathbf R_j = 1 \right) = 0
\end{align*}
\hfill $\square$

\noindent \textbf{Proof of \ref{enum_A2}: } Recall that $\mathbb E_{\mathbf x_j} \left[ J_j \left(\mathbf x_j, \boldsymbol{\mathcal P}_{j:N}, \hat x_{j:N}' \right) \,\Big|\, \mathbf R_k=0, \cdots, \mathbf R_{j-1}=0 \right]$ and $J_j^\ast$ are defined in \eqref{eq_cost-to-go_01} and \eqref{eq_cost-to-go_02}, respectively, and that $\mathbf R_j = \boldsymbol{\mathcal P}_j\left( \mathbf x_j \right)$ for all $j$ in $\{k, \cdots, N\}$. We will use the mathematical induction to show that the following is true: 
\begin{align*}
  \mathbb E_{\mathbf x_k} \left[ J_k \left(\mathbf x_k, \boldsymbol{\mathcal P}_{k:N}, \hat x_{k:N}' \right) \right] = \min_{\boldsymbol{\mathcal P}_{k:N}'}\mathbb E_{\mathbf x_k} \left[ J_k \left(\mathbf x_k, \boldsymbol{\mathcal P}_{k:N}', \hat x_{k:N}' \right) \right]
\end{align*}

Using Claim 3 and Claim 4, we can derive the following:
\begin{subequations} \label{eq_proof_A2_01}
 \begin{align}
   &\mathbb E_{\mathbf x_N} \left[ d^2 \left( \mathbf x_N, \hat x_N' \right) \,\Big|\, \mathbf R_N = 0 \right] \nonumber \\
   &= \mathbb E_{\mathbf x_N} \left[ \min \left\{ d^2 \left( \mathbf x_N, \hat x_N' \right), c_N' \right\} \,\Big|\, \mathbf R_N = 0 \right] \nonumber \\
   &=\mathbb E_{\mathbf x_N} \left[ J_N^\ast \left(\mathbf x_N, \hat x_{N}' \right) \,\Big|\, \mathbf R_N = 0 \right]
 \end{align}
 and
 \begin{align}
   c_N' &= \mathbb E_{\mathbf x_N} \left[ \min \left\{ d^2 \left( \mathbf x_N, \hat x_N' \right), c_N' \right\} \,\Big|\, \mathbf R_N = 1 \right] \nonumber \\
   &=\mathbb E_{\mathbf x_N} \left[ J_N^\ast \left(\mathbf x_N, \hat x_{N}' \right) \,\Big|\, \mathbf R_N = 1 \right]
 \end{align}
\end{subequations}
provided that $\mathbb P \left( \mathbf R_N=1 \,\Big|\, \mathbf R_k=0, \cdots, \mathbf R_{N-1}=0\right)$ is nonzero. From \eqref{eq_cost-to-go_01}, \eqref{eq_cost-to-go_02}, and \eqref{eq_proof_A2_01}, we can derive that
\begin{align}
 &\mathbb E_{\mathbf x_N} \left[ J_N \left(\mathbf x_N, \boldsymbol{\mathcal P}_{N}, \hat x_{N}' \right) \,\Big|\, \mathbf R_{N-1}=0 \right] \nonumber \\
 &=\mathbb E_{\mathbf x_N} \left[ J_N^\ast \left(\mathbf x_N, \hat x_{N}' \right) \,\Big|\, \mathbf R_{N-1}=0 \right]
\end{align}

Suppose that the following relation holds:
\begin{align}
 &\mathbb E_{\mathbf x_{j+1}} \left[ J_{j+1} \left(\mathbf x_{j+1}, \boldsymbol{\mathcal P}_{j+1:N}, \hat x_{j+1:N}' \right) \,\Big|\, \mathbf R_k=0, \cdots, \mathbf R_{j}=0 \right] \nonumber \\
 &=\mathbb E_{\mathbf x_{j+1}} \left[ J_{j+1}^\ast \left(\mathbf x_{j+1}, \hat x_{j+1:N}' \right) \,\Big|\, \mathbf R_k=0, \cdots, \mathbf R_{j}=0 \right]
\end{align}
Then, using Claim 3 and Claim 4, we can derive the following:
\begin{subequations} \label{eq_proof_A2_02}
 {\small \begin{align}
   &\mathbb E_{\mathbf x_j} \left[ d^2 \left( \mathbf x_j, \hat x_j' \right) + \mathbb E_{\mathbf x_{j+1}} \left[ J_{j+1} \left(\mathbf x_{j+1}, \boldsymbol{\mathcal P}_{j+1:N}, \hat x_{j+1:N}' \right) \,\Big|\, \mathbf x_j \right] \,\Big|\, \mathbf R_k = 0, \cdots, \mathbf R_j = 0 \right] \nonumber \\
   &=\mathbb E_{\mathbf x_j} \left[ \min \left\{ d^2 \left( \mathbf x_j, \hat x_j' \right) + \mathbb E_{\mathbf x_{j+1}} \left[ J_{j+1}^\ast \left(\mathbf x_{j+1}, \hat x_{j+1:N}' \right) \,\Big|\, \mathbf x_j \right], c_j' \right\} \,\Big|\, \mathbf R_k = 0, \cdots, \mathbf R_j = 0 \right] \nonumber \\
   &=\mathbb E_{\mathbf x_j} \left[ J_j^\ast \left(\mathbf x_j, \hat x_{j:N}' \right) \,\Big|\, \mathbf R_k = 0, \cdots, \mathbf R_j = 0 \right]
 \end{align}}
 and
 {\small \begin{align}
   c_j' &=\mathbb E_{\mathbf x_j} \left[ \min \left\{ d^2 \left( \mathbf x_j, \hat x_j' \right) + \mathbb E_{\mathbf x_{j+1}} \left[ J_{j+1}^\ast \left(\mathbf x_{j+1}, \hat x_{j+1:N}' \right) \,\Big|\, \mathbf x_j \right], c_j' \right\} \,\Big|\, \mathbf R_k = 0, \cdots, \mathbf R_j = 1 \right] \nonumber \\
        &= \mathbb E_{\mathbf x_j} \left[ J_j^\ast \left(\mathbf x_j, \hat x_{j:N}' \right) \,\Big|\, \mathbf R_k = 0, \cdots, \mathbf R_j = 1 \right]
 \end{align}}
\end{subequations}
 provided that $\mathbb P \left( \mathbf R_j=1 \,\Big|\, \mathbf R_k=0, \cdots, \mathbf R_{j-1}=0\right)$ is non-zero. From \eqref{eq_cost-to-go_01}, \eqref{eq_cost-to-go_02}, and \eqref{eq_proof_A2_02}, we can derive that
\begin{align} \label{eq_proof_A2_03}
 &\mathbb E_{\mathbf x_j} \left[ J_j \left(\mathbf x_j, \boldsymbol{\mathcal P}_{j:N}, \hat x_{j:N}' \right) \,\Big|\, \mathbf R_k=0, \cdots, \mathbf R_{j-1}=0 \right] \nonumber \\
 &=\mathbb E_{\mathbf x_j} \left[ J_j^\ast \left(\mathbf x_j, \hat x_{j:N}' \right) \,\Big|\, \mathbf R_k=0, \cdots, \mathbf R_{j-1}=0 \right]
\end{align}

By induction, we conclude that \eqref{eq_proof_A2_03} holds for all $j$ in $\{k, \cdots, N\}$. By Definition~\ref{def_best_response_mapping_P} and the fact that
\begin{align*}
  \min_{\boldsymbol{\mathcal P}_{k:N}'}\mathbb E_{\mathbf x_k} \left[ J_k \left(\mathbf x_k, \boldsymbol{\mathcal P}_{k:N}', \hat x_{k:N}' \right) \right] =\mathbb E_{\mathbf x_k} \left[ J_k^\ast \left(\mathbf x_k, \hat x_{k:N}' \right) \right]
\end{align*}
we conclude that the policies $\boldsymbol{\mathcal P}_{k:N}$ belong to $\boldsymbol{\mathfrak P} \left( \hat x_{k:N}' \right)$. \hfill \QED

\end{appendix}
\bibliographystyle{IEEEtran}
\bibliography{IEEEabrv,spark2016_remote_estimation}

\begin{thebibliography}{10}
\providecommand{\url}[1]{#1}
\csname url@rmstyle\endcsname
\providecommand{\newblock}{\relax}
\providecommand{\bibinfo}[2]{#2}
\providecommand\BIBentrySTDinterwordspacing{\spaceskip=0pt\relax}
\providecommand\BIBentryALTinterwordstretchfactor{4}
\providecommand\BIBentryALTinterwordspacing{\spaceskip=\fontdimen2\font plus
\BIBentryALTinterwordstretchfactor\fontdimen3\font minus
  \fontdimen4\font\relax}
\providecommand\BIBforeignlanguage[2]{{%
\expandafter\ifx\csname l@#1\endcsname\relax
\typeout{** WARNING: IEEEtran.bst: No hyphenation pattern has been}%
\typeout{** loaded for the language `#1'. Using the pattern for}%
\typeout{** the default language instead.}%
\else
\language=\csname l@#1\endcsname
\fi
#2}}

\bibitem{qiang_du1999_siam_review}
Q.~Du, V.~Faber, and M.~Gunzburger, ``Centroidal voronoi tessellations:
  Applications and algorithms,'' \emph{SIAM Review}, vol.~41, no.~4, pp.
  637--676, 1999.

\bibitem{molin2012_ifac}
A.~Molin and S.~Hirche, ``An iterative algorithm for optimal event-triggered
  estimation,'' in \emph{4th IFAC Conference on Analysis and Design of Hybrid
  Systems}, June 2012, pp. 64--69.

\bibitem{lipsa2011_ieee_tac}
G.~M. Lipsa and N.~C. Martins, ``Remote state estimation with communication
  costs for first-order {LTI} systems,'' \emph{{IEEE} Trans. Automat. Contr.},
  vol.~56, no.~9, pp. 2013--2025, Sept 2011.

\bibitem{nayyar2013_ieee_tac}
A.~Nayyar, T.~Başar, D.~Teneketzis, and V.~V. Veeravalli, ``Optimal strategies
  for communication and remote estimation with an energy harvesting sensor,''
  \emph{IEEE Transactions on Automatic Control}, vol.~58, no.~9, pp.
  2246--2260, Sept 2013.

\bibitem{spark7040013}
S.~Park and N.~Martins, ``Individually optimal solutions to a remote state
  estimation problem with communication costs,'' in \emph{Decision and Control
  (CDC), 2014 IEEE 53rd Annual Conference on}, Dec 2014, pp. 4014--4019.

\bibitem{yonggang_xu2004_ieee_cdc}
Y.~Xu and J.~P. Hespanha, ``Optimal communication logics in networked control
  systems,'' in \emph{Decision and Control, 2004. CDC. 43rd IEEE Conference
  on}, vol.~4, Dec 2004, pp. 3527--3532.

\bibitem{cogil2007_acc}
R.~Cogill, S.~Lall, and J.~P. Hespanha, ``A constant factor approximation
  algorithm for event-based sampling,'' in \emph{American Control Conference,
  2007. ACC '07}, July 2007, pp. 305--311.

\bibitem{lichun_li2011_ieee_cdc-ecc}
L.~Li and M.~Lemmon, ``Performance and average sampling period of sub-optimal
  triggering event in event triggered state estimation,'' in \emph{Decision and
  Control and European Control Conference (CDC-ECC), 2011 50th IEEE Conference
  on}, Dec 2011, pp. 1656--1661.

\bibitem{lichun_li2013_acc}
L.~Li, Z.~Wang, and M.~Lemmon, ``Polynomial approximation of optimal event
  triggers for state estimation problems using {SOSTOOLS},'' in \emph{American
  Control Conference (ACC), 2013}, June 2013, pp. 2699--2704.

\bibitem{dudley_9780511755347}
R.~M. Dudley, \emph{Real Analysis and Probability}, 2nd~ed.\hskip 1em plus
  0.5em minus 0.4em\relax Cambridge University Press, 2002.

\bibitem{mcclintock2012_em}
B.~T. McClintock, R.~King, L.~Thomas, J.~Matthiopoulos, B.~J. McConnell, and
  J.~M. Morales, ``A general discrete-time modeling framework for animal
  movement using multistate random walks,'' \emph{Ecological Monographs},
  vol.~82, no.~3, pp. 335--349, 2012.

\bibitem{billingsley1995_wiley}
P.~Billingsley, \emph{Probability and measure}, 3rd~ed.\hskip 1em plus 0.5em
  minus 0.4em\relax Wiley, 1995.

\bibitem{skorokhod1956_tpa}
A.~V. Skorokhod, ``Limit theorems for stochastic processes,'' \emph{Theory of
  Probability \& Its Applications}, vol.~1, no.~3, pp. 261--290, 1956.

\end{thebibliography}

\end{document}